\pgfplotsset{compat=1.15}
\newtheorem{theorem}{Theorem}[section]
\newtheorem{remark}[theorem]{Remark}
\newtheorem{lemma}[theorem]{Lemma}
\newtheorem{corollary}[theorem]{Corollary}
\newtheorem{proposition}[theorem]{Proposition}
\theoremstyle{definition}
\newtheorem{definition}[theorem]{Definition}
\def\al{\alpha} 
\def\be{\beta} 
\def\ga{\gamma} 
\def\de{\delta} 
\def\ep{\varepsilon} 
\def\ze{\zeta} 
\def\th{\theta}
\def\la{\lambda}
\def\ta{\tau} 
\def\ph{\varphi} 
\def\ps{\psi}
\def\La{\Lambda}
\def\L{\mathcal{L}}
\def\R{{\mathbb R}}
\def\N{{\mathbb N}}
\DeclareMathOperator*{\argmin}{arg\,min}
\newcommand{\dd}{\mathrm{d}}
\newcommand\Relax{\mathrm{relax}}
\newcommand\spt{\mathrm{supp}}
\newcommand\cost{\mathrm{cost}}
\newcommand\pigor{{\Pi_{g^\perp}(\mu, \nu)}}
\newcommand\PP{\mathscr{P}}
\newcommand\MM{\mathscr{M}}
\newcommand\Lc{{\mathscr{L}}_c}
\newcommand\eps{\varepsilon} 
\newcommand\tal{\widetilde{\alpha}}
\newcommand\tfi{\widetilde{\phi}}
\newcommand\tps{\widetilde{\psi}}
\newcommand\txi{\widetilde{\xi}}
\newcommand\hal{\widehat{\alpha}}
\newcommand\hfi{\widehat{\phi}}
\newcommand\hps{\widehat{\psi}}
\newcommand\hl{\widehat{\lambda}}
\newcommand\hb{\widehat{\beta}}
\newcommand\hxi{\widehat{\xi}}
\newcommand\onD{{\overline{\nu}}_D}
\newcommand\SMm{\mathrm{Softmin}^{\eps}_ {\mu}}
\newcommand\SMn{\mathrm{Softmin}^{\eps}_{\nu}}
\definecolor{darkblue}{rgb}{0.0,0.0,0.5}
\definecolor{darkred}{rgb}{0.5,0.0,0.0}
\newcommand{\tth}{\tilde{\theta}}
\DeclareMathOperator*{\supp}{supp}
\title{Weak optimal transport with moment constraints: constraint qualification, dual attainment and entropic regularization}
\author{Guillaume Carlier, Hugo Malamut, Maxime Sylvestre}
\date{\today}
\begin{document}

\maketitle

\begin{abstract}
    We consider weak optimal problems (possibly entropically penalized) incorporating both soft and hard  (including the case of the martingale condition) moment constraints. Even in the special case of the martingale optimal transport problem,  existence of Lagrange multipliers corresponding to the martingale constraint is notoriously hard (and may fail unless some specific additional assumptions are made). We identify a condition of qualification of the hard moment constraints (which in the martingale case is implied by well-known conditions in the literature) under which general dual attainment results are established. We also analyze the convergence of entropically regularized schemes combined with penalization of the moment constraint and illustrate our theoretical findings by numerically solving in dimension one, the Brenier-Strassen problem of Gozlan and Juillet \cite{Gozlan-Juillet} and a family of problems which interpolates between monotone transport and the left-curtain martingale coupling of Beiglb\"{o}ck and Juillet \cite{BeigJuilletl}.
\end{abstract}

\smallskip
\noindent \textbf{Keywords.} weak optimal transport, martingale transport, entropic transport, constraint qualification. 

\smallskip
\noindent \textbf{Mathematics Subject Classification.} 49Q22, 49K35.


\section{Introduction}\label{sec-intro}
The optimal transport problem of Monge and Kantorovich, consists, given two Borel probability measures $\mu$ and $\nu$ on two Polish spaces $X$ and $Y$ and an lsc  function $\cost$: $X\times Y \to \R_+$, in finding an optimal plan or coupling between $\mu$ and $\nu$, i.e. an element of the set $\Pi(\mu, \nu)$ of Borel probability on $X\times Y$ having $\mu$ and $\nu$ as marginals that makes the total transport cost minimal, i.e.  which solves the linear optimization problem
\begin{equation}\label{mkot}
\inf_{\pi \in \Pi(\mu, \nu)} \int_{X\times Y} \cost(x, y) \dd \pi(x,y).
\end{equation}
Optimal transport (OT) has become a mature field of research that has tremendously developed in the last three decades and a detailed account of the theory and many of its applications can be found in the textbooks of  Villani \cite{Villanibook} and Santambrogio \cite{FSbook}. Many extensions of the classical Monge-Kantorovich OT problem, driven by various applications, have emerged among which, relevant to the present work, we wish to emphasize:
\begin{itemize}

\item \textbf{Entropic optimal transport}: this is the case where an additional relative entropic term is added to the transport cost in \eqref{mkot} typically with a small prefactor $\eps$, this problem has received a lot of attention in the last decade since it is at the heart of the efficient Sinkhorn algorithm as emphasized by Cuturi's influential paper \cite{Cut13}. Entropic optimal transport  is also intimately related to the Schr\"{o}dinger bridge problem \cite{Leo14}, we refer the reader to the lecture notes \cite{Nutz21} for a self-contained exposition of the topic. 

\item \textbf{Martingale optimal transport}: in these problems motivated by robust pricing in mathematical finance \cite{beiglböck2013modelindependent}, \cite{GHLT}, one has to face an extra constraint on the plan $\pi$, namely that it has to be the law of a martingale. By  classical theorems of Strassen \cite{strassen} and Cartier, Fell and Meyer \cite{CFM} the existence of  martingale couplings with marginals $\mu$ and $\nu$ is  equivalent to the requirement that $\mu$ and $\nu$ are in the convex order. For extensions to orders associated with more general cones and involved irreducible decompositions, we refer the reader to the recent work of Ciosmak \cite{ciosmak2024localisationconstrainedtransportsi}, \cite{ciosmak2024localisationconstrainedtransportsii}.  If the existence of an optimal martingale plan can easily be obtained by the direct method, and some characterizations are known in dimension one --see \cite{BeigJuilletl}, \cite{Shadowcouplings},  \cite{mathiasnutztouzi2017}, \cite{Beiglbck2019dualattainment1d}-- a general characterization by duality arguments is a notoriously delicate issue. The dual attainment problem i.e. the existence of a Lagrange multiplier for the martingale constraint is difficult especially in several dimensions and typically requires sophisticated arguments or extra assumptions, see \cite{GKL}, \cite{DeMarch}, \cite{backhoffveraguas2023bass}, \cite{ciosmak2024localisationconstrainedtransportsi}, \cite{ciosmak2024localisationconstrainedtransportsii}. It is worth pointing out that martingale optimal transport is a special instance of optimal transport with moment constraints i.e. extra constraints on the plan $\pi$ of the form
\begin{equation}\label{hardmc}
\int_{X\times Y} g(x,y) \dd \pi_x(y)=0, \mbox{ for $\mu$-a.e. $x$}
\end{equation}
where $\pi_x\in \PP(Y)$ represents the conditional distribution of $y$ given $x$ induced by the coupling $\pi$ (see below). The martingale constraint, of course, corresponds to the case $g(x,y)=y-x$, $x$ and $y$ in $\R^d$. For an extension of Kantorovich duality to such moment constraints, see \cite{Zaev}. Note also that the particular case where $g$ only depends on $y$ arises in the context of \emph{Vector Quantile Regression} --see \cite{CCG} and \cite{CCGBeyond}, which leads to optimal transport subject to conditional independence constraints. 


\item \textbf{Weak optimal transport (WOT)}: Motivated by functional inequalities and previous works by Talagrand \cite{Talagrand} and Marton \cite{Marton}, Gozlan, Roberto, Samson and Tetali \cite{GRST} introduced a rich new class of optimal transport problems, called weak optimal transport which rapidly stimulated an intense stream of research. We refer in particular the reader to \cite{ApplicationsofWOT} where several important applications are discussed. To describe WOT, let us recall that  by the disintegration Theorem (see \cite{DellacherieMeyer}) any plan $\pi\in \Pi(\mu, \nu)$ can be disintegrated with respect to its first marginal $\mu$ as follows: there exists a (unique up to a $\mu$ null set)  family   $(\pi_x)_{x\in Y}$ of (conditional) probabilities on $Y$, Borel, in the sense that $x\in X \mapsto \pi_x(B)$ is Borel for every Borel subset  $B$ of $Y$, and such that $\pi=\mu \otimes \pi_x$ in the sense that for every continuous and bounded function $\beta$ on $X\times Y$, one has
\[\int_{X\times  Y} \beta \dd \pi=\int_X \Big( \int_Y \beta(x,y) \dd \pi_x(y) \Big) \dd \mu(x).\]
Given a weak cost function  $c$ : $X\times \PP(Y) \to \R\cup \{+\infty\}$, the authors of \cite{GRST} considered
\[\inf_{\pi\in \Pi(\mu, \nu)} \int_X c(x, \pi_x) \dd \mu(x).\]
Of course, in the linear case where $c(x,p)=\int_Y \cost(x,y) \dd p(y)$, one recovers the Monge-Kantorovich problem \eqref{mkot}, but WOT theory encompasses many other interesting situations. In particular for $X=Y=\R^d$ and 
\[c(x, p)=\begin{cases} 0 \mbox{ if $\int_Y y \dd p(y)=x$} \\+ \infty \mbox{ otherwise } \end{cases}\]
we recover the martingale constraint. Another interesting case, arising in particular in the Brenier-Strassen framework of  \cite{Gozlan-Juillet}, is when 
\[c(x,p)=\theta \Big(\int_Y y \dd p(y)-x\Big)\]
which can be seen as a penalization of the martingale constraint. Such costs are often referred to as Marton costs. A remarkable aspect of WOT theory is that, under adequate conditions on the weak cost (in particular convexity in the measure variable), it can be addressed by convex duality tools which generalize  Kantorovich duality see \cite{GRST}, \cite{ABCWOT}, \cite{beiglbock2025fundamentaltheoremweakoptimal}. 

\end{itemize}

\smallskip

\textbf{Contributions} In the present paper, we are interested in WOT problems involving both hard and soft moment constraints. By hard constraints, we mean that constraints of the form \eqref{hardmc} will be imposed on the coupling $\pi$ for some function $g$. By soft we mean that the overall cost will  involve a Marton term of the form $\theta(\int_X f(x,y) \dd \pi_x(y))$ where $\theta$ is convex and has to be thought as a penalization of the moments involving the function $f$. To sum up, we aim at studying generalizations of \eqref{mkot} which consist in minimizing
\[ J(\pi)=\int_{X} c(x, \pi_x) \dd \mu(x) + \int_{X} \theta\Big(  \int_Y f(x,y) \dd \pi_x(y) \Big) \dd \mu(x)\]
among the couplings $\pi\in \Pi(\mu, \nu)$ which satisfy the hard moment constraints \eqref{hardmc}. We will also consider the entropic regularization of such problems. Under quite general assumptions (involving in particular the convexity and some regularity of the WOT term $c(x, \pi_x)$) we will establish dual attainment results. It is a classical fact from convex analysis \cite{EkelandTemam} that dual attainment can typically be deduced from some stability of the value of the primal with respect to perturbations of the data (e.g. the marginals) and adequate qualification conditions, such as the Slater condition in convex programming or the celebrated Attouch-Brezis qualification condition \cite{AttouchBrezis} in Banach spaces.  As in the martingale constraint case, the most delicate point is the existence of Lagrange multipliers for the hard moment constraint \eqref{hardmc}. We will identify a qualification of the constraint condition of Slater type which guarantees some form of coercivity in the dual and ultimately leads to uniform a priori bounds and the existence of dual solutions.  Dual attainment results are detailed in section \ref{sec-attain} in Theorems \ref{existoptpot} and \ref{existoptpoteps} corresponding respectively to the unregularized and regularized  situations. In the context of martingale optimal transport, we will demonstrate that our qualification of the constraint condition is implied by conditions of nondeneracy and irreducibility which are commonly used in the literature.  We will also prove  in Theorem \ref{thm:stab-irred-support} that irreducibility and nondegeneracy are equivalent to a $W_\infty$ stability criterion for $\mu$ and $\nu$ in the convex order. We believe these new connections might be useful for future research in the field.   We  also analyze convergence results in regimes where the entropic parameter vanishes and the penalty on moment constraint increases, our previous results concerning existence and boundedness of the dual variables will also be important in establishing convergence rates. Finally, we  will use a variant of the SISTA (see \cite{CDGSSista}) hybrid algorithm for numerically solving in dimension one, the Brenier-Strassen problem of Gozlan and Juillet \cite{Gozlan-Juillet} and a family of problems which interpolates between monotone transport and the martingale monotone coupling of Beiglb\"{o}ck and Juillet \cite{BeigJuilletl},\cite{NutzWieselEMOT}, \cite{CCGBeyond}



\smallskip

\textbf{Outline} The paper is organized as follows. In section \ref{sec-qual}, we introduce the concept of qualification of moment constraints and in particular emphasize its role in the important case of martingale optimal transport (MOT) and its intimate connections with nondegeneracy and irreducibility conditions which are commonly used in the MOT literature. In section \ref{sec-duality}, we introduce dual formulations and establish strong duality results under quite general assumptions. In section  \ref{sec-attain},  we demonstrate that the notion of qualification of the moment constraints identified in section \ref{sec-qual} is a key tool to establish dual attainment results i.e. in the first place existence of Lagrange multipliers for the moment constaints in natural functional spaces  for both unregularized and  entropically regularized situations. Section \ref{sec-convergence} establishes qualitative and quantitative convergence results when one lets the entropic parameter vanish, the penalization of the moment constraints explode or both at the time in some natural asymptotic regime.  Finally in section \ref{sec-sista}, we illustrate the practical implications of the previous  theoretical findings by implementing the SISTA hybrid algorithm on the Brenier-Strassen problem of Gozlan and Juillet \cite{Gozlan-Juillet} and a family of problems which interpolates between monotone transport and the martingale monotone coupling of Beiglb\"{o}ck and Juillet \cite{BeigJuilletl}.

\smallskip

{\textbf{Notations, standing assumptions}} Eventhough we believe some of our results can be generalized to Polish spaces, to avoid technicalities, we will throughout the paper always assume that $X$ and $Y$ are compact metric spaces (and in some specific situations e.g. when we consider MOT or some entropic regularization is present, we will even sometimes assume that $Y$, or even $X$ and $Y$ are compact subsets of $\R^d$; this will always be clearly indicated when needed). We denote by $\MM(X)$ (respectively $\MM_+(X)$, $\PP(X)$) the space of Borel measures on $X$ (respectively nonnegative Borel measures, respectively Borel probability measures on $X$). Riesz's representation Theorem will enable us to identify the topological dual of $C(X)$ (always endowed with the uniform norm) with $\MM(X)$.   The previous considerations will of course apply to $Y$ in place of $X$. We will also always assume that the function $g$ defining the \emph{hard} moment constraints and the function $f$ defining the soft moment constraints are continuous (hence uniformly continuous) on $X\times Y$, more precisely $g\in C(X\times Y, \R^N)$ and $f\in C(X\times Y, \R^M)$. If $(Z, \mathrm{dist})$ is a compact metric space, we will always endow $\PP(Z)$ with  the weak star topology and recall that the latter is metrized by the Wasserstein distance $W_q$ of order $q\in [1, \infty)$ defined, for every $P$ and $Q$ in $\PP(Z)$ by
\[W_q(P, Q):= \Big( \min_{\pi \in \Pi(P, Q)} \int_{Z\times Z}  \mathrm{dist}(z_1,z_2)^q \dd \pi(z_1,z_2)\Big)^{\frac{1}{q}}.\]
We will also use the $\infty$-Wasserstein distance:
\[W_{\infty}(P, Q):=\min_{\pi \in \Pi(P, Q)} \pi-\mathrm{Esssup} \;  \mathrm{dist}(\cdot, \cdot).\]
Recall that the relative entropy between two probability measures $P$ and $Q$ on  $Z$ is defined by
\[H(P \vert Q) :=\begin{cases} \int_Z  \log \Big(\frac{\dd P}{\dd Q}\Big) \dd P \mbox{ if $P \ll Q $}, \\+ \infty \mbox{ otherwise}\end{cases}\]
where $P\ll Q$ means that $P$ is absolutely continuous with respect to $Q$. The fact that $P$ and $Q$ are equivalent i.e.  $P\ll Q$ and  $Q\ll P$ will be denoted by $P \sim Q$. For $\phi \in C(Z)$, a modulus of continuity of $\phi$ is a function $\omega$ : $\R_+\to \R_+$ such that $\omega(t)\to 0$ as $t\to 0^+$ and  $\vert \phi(z_1)-\phi(z_2)\vert \leq \omega(\mathrm{dist}(z_1,z_2))$ for every $(z_1, z_2)\in Z^2$. We will often use the notation $\omega_\phi$ for a modulus of continuity of $\phi$ and if needed assume that it is nondecreasing and concave. If $K$ is a relatively compact subset of $C(Z)$, by the Arzel\`a-Ascoli Theorem, the elements of $K$ have a common modulus of continuity which we will denote by $\omega_K$. We will finally use the notation $T_\# m$ for the pushforward (or image measure) of a probability $m$ by a measurable map $T$.
\section{Moment constraints and qualification}\label{sec-qual}

Let $g:X\times Y\to \mathbb{R}^N$ be the moment map and let $\mu \in \PP(X),\nu \in \PP(Y)$ be the prescribed marginals. The goal of this section is to describe conditions on the moment constraint 
\begin{equation*}
    \int_{X \times Y} g(x,y) d \pi_x(y) = 0,
\end{equation*}
where $\pi \in \Pi(\mu,\nu)$, which will guarantee dual attainment. This question is key to develop a computational approach because the hard moment constraint can prevent dual attainment. When $g(x,y)=y-x$, $x$ and $y$ in $\R^d$, we recover martingale optimal transport. In that case, dual attainment results are only known for $d=1$ under strong assumptions \cite{Beiglbck2019dualattainment1d,mathiasnutztouzi2017}. In the general context of weak optimal transport, the recent work \cite{beiglbock2025fundamentaltheoremweakoptimal} establishes dual attainment for a broad class of weak optimal transport costs, however the framework of \cite{beiglbock2025fundamentaltheoremweakoptimal} does not accommodate the hard moment constraint. In fact, as in finite-dimensional convex programming, dual attainment doesn't always hold, a classical assumption to ensure it is that the constraints satisfy a Slater-type qualification condition. 

\begin{definition}\label{def:slater}
    The moment constraint $g$ is said to be \emph{qualified} with respect to $(\mu,\nu)$ if:
    \begin{itemize}
        \item for every $\pi \in \Pi(\mu,\nu)$ we have :
        \begin{equation*}
            \int_{X\times Y} g(x,y) \dd\pi(x,y) = 0,
        \end{equation*}
        \item there exists $\eta > 0$ such that for every $r\in L^\infty(\mu)$ satisfying $\int_X r \dd\mu = 0$ and $\Vert r \Vert_\infty \leq \eta$ there is $\pi \in \Pi(\mu,\nu)$ such that 
        \begin{equation*}
        \int_{Y} g(x,y) \dd\pi_x(y) = r(x)    
        \end{equation*}
        for $\mu$ almost every $x$.
    \end{itemize}
\end{definition}

In the following paragraphs we present the links between commonly made assumptions in the literature and this Slater-type qualification of constraints.

\subsection{Martingale optimal transport}

In this paragraph, $X$ and $Y$ denote the \emph{same} convex compact subset of $\R^d$ with nonempty interior in order for the  martingale constraint to make sense. As stated above, the martingale optimal transport \cite{beiglböck2013modelindependent} is the main example to have in mind when thinking about moment constraints. First, it is important to note that the set of couplings satisfying the moment constraint, i.e. the set of martingale couplings, can be empty. Strassen's theorem \cite{strassen} ensures that a martingale coupling exists between $\mu$ and $\nu$ if and only if 
\begin{equation*}
   \forall h \in \Gamma_{0}(X) \quad  \int_X h \dd\mu \leq \int_Y h \dd\nu
\end{equation*}
where $\Gamma_{0}(X)$ is the set of convex functions on $X$. Classically it is enough for $(\mu,\nu)$ to satisfy this inequality for $h \in \Gamma_{0,1}(X)$, where $\Gamma_{0,1}(X)$ is the set of convex and $1$-Lipschitz functions on $X$. If these inequalities are satisfied, we say that $\mu$ is dominated by $\nu$ in the convex order and denote it $\mu \leq_{\mathrm{cvx}}\nu$. However in order to derive dual attainment in dimension higher than $1$ it is not enough to assume that $\mu \leq_{\mathrm{cvx}}\nu$. In that case, another assumption called \emph{irreducibility} is made. It states that it is possible to transfer mass between any two points in the support of $\mu \otimes \nu$ using a martingale.

\begin{definition}[\cite{BackhoffVeraguas2023ExistenceOB}]\label{def:irreducibility}
    The pair of probability measures $(\mu,\nu)$ is said to be \emph{irreducible} if there is $\pi \in \Pi(\mu,\nu)$ such that $\int_Y y \dd\pi_x(y) = x$ and $\pi_x \sim \nu$ (i.e.  $\pi_x$ and $\nu$ have the same negligible sets) for $\mu$-almost every $x$.
\end{definition}

The definition of irreducibility is slightly different from the one presented in the introduction of \cite{BackhoffVeraguas2023ExistenceOB}. However, the equivalence with the definition presented here is established in Theorem D.1 in the appendix of \cite{BackhoffVeraguas2023ExistenceOB}. The existence of a martingale transport between $\mu$ and $\nu$ imposes a geometric constraint on their supports.

\begin{lemma}\label{lem:inclusion-supports}
    Assume that $\mu \leq_{\mathrm{cvx}} \nu$ then
    \begin{equation*}
        \mathrm{supp}(\mu) \subset C_\nu
    \end{equation*}
    where $C_\nu = \mathrm{co}(\mathrm{supp}(\nu))$ and $\mathrm{co}$ denotes the convex envelope.
\end{lemma}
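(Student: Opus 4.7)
The natural approach is by contradiction via Hahn–Banach separation, using the convex-order inequality against a well-chosen convex test function.

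The plan is to suppose, for contradiction, that there exists a point $x_0 \in \mathrm{supp}(\mu)$ with $x_0 \notin C_\nu$. Since $Y$ is compact, $\mathrm{supp}(\nu)$ is compact, hence $C_\nu = \mathrm{co}(\mathrm{supp}(\nu))$ is a compact convex subset of $\R^d$. Because $x_0$ lies outside this closed convex set, the Hahn–Banach separation theorem yields some $p \in \R^d \setminus \{0\}$ and $\alpha \in \R$ such that
\[
\langle p, x_0 \rangle > \alpha \geq \langle p, y \rangle \quad \text{for every } y \in C_\nu,
\]
and in particular for every $y \in \mathrm{supp}(\nu)$.

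Next I would introduce the test function
\[
h(x) := \bigl( \langle p, x \rangle - \alpha \bigr)_+ ,
\]
which is continuous, convex, and nonnegative on $X$ (in fact, after dividing by $|p|$ it even belongs to $\Gamma_{0,1}(X)$, so there is no issue restricting to Lipschitz convex test functions if needed). By construction $h$ vanishes identically on $\mathrm{supp}(\nu)$, hence $\int_Y h \dd\nu = 0$. On the other hand, $h(x_0) = \langle p, x_0 \rangle - \alpha > 0$, and by continuity $h$ is bounded below by a positive constant on some open neighborhood $U$ of $x_0$. Since $x_0 \in \mathrm{supp}(\mu)$, we have $\mu(U) > 0$, so $\int_X h \dd\mu > 0$.

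This strictly violates the convex-order inequality $\int_X h \dd\mu \leq \int_Y h \dd\nu$ associated with the assumption $\mu \leq_{\mathrm{cvx}} \nu$, giving the desired contradiction and proving $\mathrm{supp}(\mu) \subset C_\nu$. I do not anticipate any serious obstacle: the only mild point is to make sure $C_\nu$ is closed, which is immediate here thanks to the compactness of $Y$ (convex hulls of compact subsets of $\R^d$ are compact by Carathéodory's theorem), so that the separating hyperplane argument applies without having to pass to the closed convex hull.
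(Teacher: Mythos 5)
Your proof is correct and rests on the same core idea as the paper's: test the convex-order inequality against a convex function that vanishes on $C_\nu$ and is strictly positive somewhere on $\mathrm{supp}(\mu)$. The only cosmetic difference is in how the test function is produced and how the conclusion is reached. The paper avoids the contradiction argument altogether: it takes $h = \mathrm{dist}(\cdot, C_\nu)$, which is convex, $1$-Lipschitz, nonnegative, and vanishes on $C_\nu$; the convex order then gives $0 \leq \int_X h \, \dd\mu \leq \int_Y h \, \dd\nu = 0$, forcing $h \equiv 0$ on $\mathrm{supp}(\mu)$, i.e.\ $\mathrm{supp}(\mu) \subset C_\nu$ (using closedness of $C_\nu$). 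You instead isolate a single bad point, separate it from $C_\nu$ by a hyperplane, and use the positive part of the resulting affine function as the test function; this is equally valid, just slightly longer since it goes through Hahn--Banach and a contradiction. The distance-function choice is essentially the envelope of all your half-space test functions, which is why the paper's argument is shorter, but both buy the same thing.
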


\begin{proof}
Define $h:\mathbb{R}^d \to \mathbb{R}$ as the distance to $C_\nu$ then $h$ is convex and $1$-Lipschitz hence $\int_{X} h \dd \nu=0 \geq \int_X h \dd \mu$ so that  $ \mathrm{supp}(\mu) \subset C_\nu$.

\end{proof}

The following non-degeneracy condition is a strenghtening of the previous inclusion:

\begin{definition}[\cite{backhoffveraguas2024gradientflowbassfunctional}]\label{def:nondegeneracy}
    We say that the pair $(\mu,\nu)$ is \emph{non-degenerate} if 
    \begin{equation*}
        \mathrm{supp}(\mu) \subset \mathrm{int}(C_\nu).
    \end{equation*}
    
\end{definition}

The non-degeneracy assumption actually gives a quantitative estimate on the distance between the support of $\mu$ and the boundary of $C_\nu$ as is shown in the following lemma, the proof is easy and left to the reader.

\begin{lemma}\label{lemma:support-distance}
Let $F$ be a compact set and $C$ a bounded open convex set such that $F \subset C$, then there is $\delta > 0$ such that $\mathrm{co}(F) + B(0,\delta) \subset C$
\end{lemma}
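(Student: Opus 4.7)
The plan is a standard compactness-plus-distance argument. First, I would show that $\mathrm{co}(F)$ is compact in $\R^d$: by Carath\'eodory's theorem every element of $\mathrm{co}(F)$ can be written as a convex combination of at most $d+1$ points of $F$, hence $\mathrm{co}(F)$ is the image of the compact set $\Delta^{d}\times F^{d+1}$ (where $\Delta^{d}$ denotes the standard simplex in $\R^{d+1}$) under the continuous map
\[
\bigl((\lambda_i)_{i=0}^{d},(x_i)_{i=0}^{d}\bigr)\;\longmapsto\;\sum_{i=0}^{d}\lambda_i x_i,
\]
and is therefore compact.

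Next, convexity of $C$ combined with $F\subset C$ immediately yields $\mathrm{co}(F)\subset C$. Since $C$ is bounded, the complement $\R^d\setminus C$ is a nonempty closed set disjoint from the nonempty compact set $\mathrm{co}(F)$ (the case $F=\emptyset$ being trivial), so
\[
d_0\;:=\;\mathrm{dist}\bigl(\mathrm{co}(F),\,\R^d\setminus C\bigr)\;>\;0
\]
and is in fact attained. Any choice $\delta\in(0,d_0)$ then satisfies $\mathrm{co}(F)+B(0,\delta)\subset C$, which is the desired conclusion.

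There is really no hard step here; the only point worth flagging is the compactness of $\mathrm{co}(F)$, which fails in infinite dimension but is automatic in $\R^d$ via Carath\'eodory. Everything else is elementary: disjointness of a compact set and a closed set in a metric space forces a strictly positive distance, and that distance precisely bounds from below the radius by which one may thicken $\mathrm{co}(F)$ while remaining inside $C$. The boundedness of $C$ enters only to guarantee that $\R^d\setminus C$ is nonempty so that the distance is well-defined; were $C$ unbounded the statement would still hold, trivially.
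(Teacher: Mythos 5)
Your proof is correct and follows essentially the same route as the paper: invoke Carath\'eodory to get compactness of $\mathrm{co}(F)$, reduce to a convex compact set inside $C$, and then use the fact that a compact set disjoint from a closed set has positive distance from it. The paper establishes that last fact by a sequential contradiction argument, whereas you state it directly as a positive distance; this is a cosmetic difference, not a different method.
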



The goal of this section is to give an equivalent characterization of these two assumptions which can be easily recast as a Slater type condition. The first step is to interpret these conditions in terms of stability of the property of convex ordering. In fact, we will show below that the conditions in Definitions \ref{def:irreducibility} and \ref{def:nondegeneracy} are equivalent to $W_\infty$ stability of the convex ordering.

\begin{definition}\label{def:stab}
    We say that the pair $(\mu,\nu)$ is $W_\infty$-\emph{stable} if there exists $\eta > 0$ such that for every $\bar{\mu} \in \mathcal{P}(X)$ such that $\int_X x \dd\bar{\mu}(x) = \int_X x \dd\mu(x)$ and $W_\infty(\bar{\mu},\mu) \leq \eta$ we have $\bar{\mu} \leq_{\mathrm{cvx}} \nu$.
\end{definition}

This property is in fact stronger than the moment qualification stated in Definition \ref{def:slater}. We state here the notion of qualification of the moment constraint when it is a martingale constraint.

\begin{definition}\label{def:slater-martingale}
    The martingale constraint is said to be \emph{qualified} with respect to $(\mu,\nu)$ if there exists $\eta > 0$ such that for every $r\in L^\infty(\mu)$ satisfying $\int_X r \dd\mu = 0$ and $\Vert r \Vert_\infty \leq \eta$ there is $\pi \in \Pi(\mu,\nu)$ such that 
        \begin{equation*}
        \int_X y \dd\pi_x = x + r(x)    
        \end{equation*}
        for $\mu$ almost every $x$.
\end{definition}

\begin{proposition}\label{prop:stable-slater}
    If the pair $(\mu,\nu)$ is $W_\infty$-stable then the martingale constraint is qualified.
\end{proposition}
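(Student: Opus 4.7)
The plan is to deduce qualification from Strassen's theorem by pushing $\mu$ forward under the map $T(x) := x+r(x)$ and then transferring back the martingale coupling that stability makes available. Let $\eta_0 > 0$ be the constant from the $W_\infty$-stability of $(\mu,\nu)$. Given $r \in L^\infty(\mu)$ with $\int_X r\,\dd\mu = 0$ and $\|r\|_\infty \leq \eta \leq \eta_0$, I would first pick a Borel representative of $r$, define $T : X \to \R^d$ by $T(x) := x + r(x)$, and, provided $\eta$ is small enough that $T(x) \in X$ for $\mu$-almost every $x$, set $\bar\mu := T_\# \mu \in \mathcal{P}(X)$.

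The two hypotheses of $W_\infty$-stability are then straightforward to verify. The identity
\[\int_X x\,\dd\bar\mu(x) = \int_X \big(x + r(x)\big)\,\dd\mu(x) = \int_X x\,\dd\mu(x)\]
follows directly from $\int r\,\dd\mu = 0$, and the coupling $(\mathrm{id},T)_\# \mu \in \Pi(\mu,\bar\mu)$ certifies $W_\infty(\mu,\bar\mu) \leq \|r\|_\infty \leq \eta_0$. Stability then yields $\bar\mu \leq_{\mathrm{cvx}} \nu$, at which point Strassen's theorem delivers a martingale coupling $\tilde\pi \in \Pi(\bar\mu,\nu)$ whose disintegration $\tilde\pi = \bar\mu \otimes \tilde\pi_{x'}$ satisfies $\int y\,\dd\tilde\pi_{x'}(y) = x'$ for $\bar\mu$-almost every $x'$.

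To conclude, I would glue $\tilde\pi$ back to $\mu$ along $T$: set $\pi := \mu \otimes \pi_x$ with $\pi_x := \tilde\pi_{T(x)}$, a kernel whose measurability is clear since it is the composition of a Borel disintegration with the Borel map $T$. The first marginal of $\pi$ is $\mu$ by construction, the change-of-variables formula
\[\int_X \tilde\pi_{T(x)}\,\dd\mu(x) = \int_X \tilde\pi_{x'}\,\dd(T_\# \mu)(x') = \int_X \tilde\pi_{x'}\,\dd\bar\mu(x') = \nu\]
identifies the second marginal, and for $\mu$-almost every $x$ the conditional mean equals $\int y\,\dd\tilde\pi_{T(x)}(y) = T(x) = x + r(x)$, which is exactly the relation required in Definition \ref{def:slater-martingale}.

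The main obstacle is the technical point I glossed over above: $W_\infty$-stability is only available for $\bar\mu \in \mathcal{P}(X)$, so one must guarantee that $T$ does not push mass outside $X$. When $\mathrm{supp}(\mu) \subset \mathrm{int}(X)$, shrinking the qualification constant below $\mathrm{dist}(\mathrm{supp}(\mu),\partial X)$ settles this at once; in the general case one would compose $T$ with a barycenter-preserving retraction supported in a thin shell near $\partial X$, at the cost of a further (harmless) reduction of the admissible range of $\|r\|_\infty$. This is a technical adjustment rather than a conceptual hurdle, and the rest of the argument — measurability of the disintegration, marginal computation, and verification of the moment relation — amounts to bookkeeping.
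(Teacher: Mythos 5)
Your proposal follows the same route as the paper's: push $\mu$ forward by $x \mapsto x + r(x)$, invoke $W_\infty$-stability to get $\bar\mu \leq_{\mathrm{cvx}} \nu$, obtain a martingale coupling $\tilde\pi \in \Pi(\bar\mu,\nu)$ from Strassen, and glue back by setting $\pi_x := \tilde\pi_{x+r(x)}$. The technical concern you flag about $\bar\mu$ possibly escaping $\mathcal{P}(X)$ is indeed elided in the paper, but it resolves automatically without any retraction: the nondegeneracy half of the $(ii)\Rightarrow(i)$ direction of Theorem \ref{thm:stab-irred-support} (whose proof of nondegeneracy uses only $W_\infty$-stability, not this proposition) yields $\mathrm{supp}(\mu)+B(0,\delta) \subset C_\nu \subset X$ for some $\delta>0$, so after shrinking the qualification constant below $\delta$ your ``first case'' always holds and the speculative retraction fallback is unnecessary.
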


\begin{proof}
First observe that for $\pi \in \Pi(\mu,\nu)$ we have $\int_{X\times Y} (y-x) \dd\pi = \int_Y y \dd\nu - \int_X x \dd\mu = 0$ because $\mu \leq_{\mathrm{cvx}} \nu$.

It remains to prove the second part of the qualification property pertaining to the conditional moments. Let $\eta > 0$ be given by the $W_\infty$-stability of $(\mu,\nu)$. Let $r \in L^\infty(\mu)$ be such that $\int_X r \dd\mu = 0$ and $\Vert r \Vert_\infty \leq \eta$. Define $\mu_r$ by $\mu_r = (\mathrm{id} + r)_\# \mu$.  Then $\mu_r$ satisfies $\int_X x \dd\mu_r = \int_X x \dd\mu$ and $W_\infty(\mu_r,\mu)\leq \eta$ thus there is $\gamma \in \Pi(\mu_r,\nu)$ which is a martingale coupling. Define $\pi$ as $\mu \otimes \gamma_{x+r(x)}$. Then $\pi \in \Pi(\mu,\nu)$ and 
\begin{equation*}
    \int_Y y\dd\pi_x(y) = \int_Y y \dd\gamma_{x+r(x)}(y) = x+r(x)
\end{equation*}
which is exactly $\int_Y (y-x) \dd\pi_x(y) = r(x)$ for $\mu$ almost every $x$. 
\end{proof}

\begin{theorem}\label{thm:stab-irred-support}
    Let $\mu,\nu$ be two probability measures such that $\mu \leq_{\mathrm{cvx}} \nu$. The following two assertions are equivalent :
    \begin{enumerate}
        \item[$(i)$] the pair $(\mu,\nu)$ is irreducible and non-degenerate,
        \item[$(ii)$] the pair $(\mu,\nu)$ is $W_\infty$-stable.
    \end{enumerate}

\end{theorem}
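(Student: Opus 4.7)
The plan is to establish the two implications separately; the essential difficulty in both directions lies in constructing (or obstructing) suitable martingale couplings.

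For $(i)\Rightarrow(ii)$: combining non-degeneracy with Lemma \ref{lemma:support-distance} yields $\delta>0$ such that $\mathrm{co}(\mathrm{supp}(\mu))+\overline{B(0,\delta)}\subset\mathrm{int}(C_\nu)$. Let $\pi=\mu\otimes\pi_x$ be the irreducible martingale coupling from Definition \ref{def:irreducibility}. For $\eta\in(0,\delta)$ and $\bar\mu\in\PP(X)$ satisfying $W_\infty(\bar\mu,\mu)\leq\eta$ and $\int x\,\dd\bar\mu=\int x\,\dd\mu$, choose a $W_\infty$-optimal coupling $\sigma=\bar\mu\otimes\sigma_{\bar x}\in\Pi(\bar\mu,\mu)$, so that $\sigma_{\bar x}$ is concentrated in $\overline{B(\bar x,\eta)}$. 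The averaged candidate $\tilde\pi_{\bar x}(\dd y):=\int\pi_x(\dd y)\,\dd\sigma_{\bar x}(x)$ defines an element of $\Pi(\bar\mu,\nu)$ with $\tilde\pi_{\bar x}\sim\nu$, but its barycenter $m(\bar x):=\int x\,\dd\sigma_{\bar x}(x)$ is only within $\eta$ of $\bar x$ rather than equal to it. I would then add a small signed kernel $\kappa_{\bar x}(\dd y)$ supported inside $\mathrm{supp}(\nu)$, built as a combination of signed $\nu$-densities concentrated near pairs of interior points whose differences span $\R^d$, so that $\kappa_{\bar x}$ has zero mass, first moment equal to $\bar x-m(\bar x)$, keeps $\tilde\pi_{\bar x}+\kappa_{\bar x}$ nonnegative, and averages to zero against $\bar\mu$ (so that the $\nu$-marginal is preserved). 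The last compatibility is global: $\int(\bar x-m(\bar x))\,\dd\bar\mu(\bar x)=0$ by the equality of the first moments of $\mu$ and $\bar\mu$; positivity is obtained from the strict lower bound on the density of $\tilde\pi_{\bar x}$ with respect to $\nu$ coming from irreducibility, combined with the fact that $\kappa_{\bar x}$ is of size $O(\eta)$. The resulting coupling is a martingale, so by Jensen's inequality $\bar\mu\leq_{\mathrm{cvx}}\nu$.

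For $(ii)\Rightarrow(i)$: non-degeneracy follows by contradiction. Suppose some $x_0\in\mathrm{supp}(\mu)\cap\partial C_\nu$ exists and let $v$ be the outward normal to a supporting hyperplane of $C_\nu$ at $x_0$. Move a mass-$\alpha$ portion of $\mu$ near $x_0$ to $x_0+(\eta/2)v\notin C_\nu$, compensated by an opposite shift of the same mass around some interior point of $\mathrm{supp}(\mu)$ to preserve the first moment. The resulting $\bar\mu$ has $W_\infty(\bar\mu,\mu)\leq\eta$ and the same mean as $\mu$ but $\mathrm{supp}(\bar\mu)\not\subset C_\nu$, so $\bar\mu\not\leq_{\mathrm{cvx}}\nu$ by Lemma \ref{lem:inclusion-supports}, contradicting the stability. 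For the irreducibility part, I would exploit that $W_\infty$-stability produces, for every Lipschitz $r:X\to\R^d$ with $\int r\,\dd\mu=0$ and $\|r\|_\infty\leq\eta$, a martingale coupling $\bar\pi^r\in\Pi((\mathrm{id}+r)_\#\mu,\nu)$; pulling back along $\mathrm{id}+r$ produces $\pi^r\in\Pi(\mu,\nu)$ with $\int y\,\dd\pi^r_x(y)=x+r(x)$, and averaging against a probability measure $\lambda$ on admissible perturbations satisfying $\int r(x)\,\dd\lambda(r)=0$ for $\mu$-a.e. $x$ yields a genuine martingale coupling $\pi\in\Pi(\mu,\nu)$; a sufficiently spanning choice of $\lambda$ should then force each conditional $\pi_x$ to charge every $\nu$-positive open set, delivering $\pi_x\sim\nu$.

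I expect the main obstacle in $(i)\Rightarrow(ii)$ to be the uniform control of the density $\dd\tilde\pi_{\bar x}/\dd\nu$ needed to absorb the correction $\kappa_{\bar x}$ pointwise in $\bar x$, while in $(ii)\Rightarrow(i)$ the difficulty is to pass beyond $\pi_x\ll\nu$ to the full equivalence $\pi_x\sim\nu$: stability is a global convex-ordering property and does not obviously produce the pointwise density statement required by irreducibility.
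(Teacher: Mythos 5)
Your sketch correctly identifies what must be shown, and the non-degeneracy half of $(ii)\Rightarrow(i)$ works (your local mass-shift near a boundary point is a fine alternative to the paper's symmetric translation $\bar\mu=\tfrac12((T_a)_\#\mu+(T_{-a})_\#\mu)$). However, both of the difficulties you flag at the end are genuine obstructions, not loose ends, and your proposed resolutions do not close them.

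In $(i)\Rightarrow(ii)$, your correction-kernel construction needs a \emph{uniform} lower bound on the density $\dd\tilde\pi_{\bar x}/\dd\nu$, uniformly in $\bar x$ and on $\mathrm{supp}(\nu)$, so that a signed perturbation of size $O(\eta)$ can be absorbed without breaking nonnegativity. Irreducibility gives only $\pi_x\sim\nu$ for $\mu$-a.e.\ $x$; the Radon--Nikodym density $\dd\pi_x/\dd\nu$ may vanish arbitrarily fast on parts of $\mathrm{supp}(\nu)$ and depend badly on $x$, so no such uniform bound is available, and the primal construction collapses. The paper avoids this entirely by dualizing: it introduces the compact family $S$ of $1$-Lipschitz convex $\phi$ with $\phi(0)=0$ and $\mathrm{diam}(\partial\phi(K))\ge\tfrac12$, shows by a compactness-plus-irreducibility argument that $\delta_2:=\inf_{\phi\in S}\int\phi\,\dd\nu-\int\phi\,\dd\mu>0$, and then verifies $\bar\mu\le_{\mathrm{cvx}}\nu$ for $W_\infty(\bar\mu,\mu)\le\delta_1\wedge\delta_2$ by a case split on $\mathrm{diam}(\partial\phi(K))$. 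That quantitative gap $\delta_2>0$ replaces the density control you were looking for.

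In $(ii)\Rightarrow(i)$, you notice correctly that getting $\pi_x\ll\nu$ is plausible but that the reverse inclusion $\nu\ll\pi_x$ is where the trouble is; averaging the family $\pi^r$ over a ``spanning'' measure $\lambda$ does not produce it, since every ingredient $\pi^r_x$ may still be singular against part of $\nu$. The paper's mechanism is different and closes this gap cleanly: via Proposition~\ref{prop:stable-slater} and Lemma~\ref{lem:slater-Hfinite}, it produces a single coupling $\gamma\in\Pi(\mu,\nu)$ with $H(\gamma\mid\mu\otimes\nu)<\infty$ whose conditional barycenters exactly cancel those of $\mu\otimes\nu$ (choosing $r(x)=-\tfrac{\eta}{C}\int_Y(y-x)\,\dd\nu(y)$), and then sets $\pi=\alpha\gamma+(1-\alpha)\mu\otimes\nu$. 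The martingale property follows by construction; $\pi_x\ll\nu$ from the finite entropy, and $\nu\ll\pi_x$ comes for free because $\pi_x\ge(1-\alpha)\nu$. That last ingredient --- mixing with the independent coupling to guarantee a lower bound by a multiple of $\nu$ --- is the missing idea in your sketch.
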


\begin{proof}
\textbf{Proof of $(ii)\implies (i)$.} In order to prove non-degeneracy we will show that $\mathrm{supp}(\mu)+B(0,\de)\subset \mathrm{co}(\mathrm{supp}(\nu))$. 
Let $a \in B(0,\delta)$ and define $\bar{\mu} = \frac{1}{2}\left((T_a)_\# \mu + (T_{-a})_\# \mu\right)$ (where $T_a(x)=x+a$ i.e. $T_a$ is the translation by $a$).  Then it is clear that $\int_X x \dd\bar{\mu}(x) = \int_X x \dd\mu$ and $W_\infty(\bar{\mu},\mu) \leq \Vert a \Vert \leq \delta$. Thus by $(ii)$ we have $\bar{\mu} \leq_{cvx} \nu$. Note that by construction $\mathrm{supp}(\mu) + a\subset\mathrm{supp}(\bar{\mu})$. 
Since $\bar{\mu}\leq_{\mathrm{cvx}} \nu$ Lemma \ref{lem:inclusion-supports} implies $\mathrm{supp}\bar{\mu}\subset \mathrm{co}(\mathrm{supp}(\nu))$. Thus $\mathrm{supp}(\mu) + a\subset \mathrm{co}(\mathrm{supp}(\nu))$ for every $a \in B(0,\delta)$. This implies that $\mathrm{supp}(\mu) + B(0,\delta) \subset \mathrm{co}(\mathrm{supp}(\nu))$. We have proven that the pair $(\mu,\nu)$ is nondegenerate.

To prove irreducibility, we will construct a martingale transport plan using the qualification of the moment constraint. Indeed, Proposition \ref{prop:stable-slater} ensures that the constraint is qualified. Let $\eta$ be given by Lemma \ref{lem:slater-Hfinite}, which ensures moment qualification using only transport plans with finite entropy. It means that, if $\| r\|_\infty \leq \eta$, then $r(x)$ can be represented as $\int (y-x)\dd \ga_x(y)$ for some $\ga \in \Pi(\mu,\nu)$ with $H(\ga \mid \mu \otimes \nu) < + \infty$. Since $X\times Y$ is compact the map $(x,y) \in X\times Y \mapsto y-x$ is bounded by a constant $C$. Thus there is $\gamma \in \Pi(\mu,\nu)$ such that $H(\gamma\mid \mu \otimes \nu) < + \infty$ and for $\mu$ almost every $x$ we have $\int_Y (y-x) \dd\gamma_x(y) = -\frac{\eta}{C} \int_Y (y-x) \dd\nu(y)$. Now set $\pi = \alpha\gamma + (1-\alpha)\mu \otimes \nu$ where $\alpha \in (0,1)$ is such that $\alpha^{-1} = 1+\frac{\eta}{C}$. We have $H(\pi \mid \mu \otimes \nu) < + \infty$ by convexity of the entropy, $\pi \in \Pi(\mu,\nu)$ by convexity of the set of transport plans and for $\mu$ almost every $x$ we have 
\begin{align*}
\int_Y (y-x) \dd\pi_x(y) &= \al\int_Y (y-x) \dd\gamma_x(y) + (1-\al)\int_Y (y-x) \dd\nu(y)\\
&= \left(-\al\frac{\eta}{C} + 1-\al\right)\int_Y (y-x) \dd\nu(y) = 0.
\end{align*}
This proves that $\pi$ is a martingale transport. It remains to show that $\pi_x$ is equivalent to $\nu$ for $\mu$ almost every $x$. Since $\int_X H(\pi_x\mid \nu) \dd\mu(x) = H(\pi\mid \mu \otimes \nu) < +\infty$ we have for $\mu$ almost every $x$ that the measure $\pi_x$ is absolutely continuous with respect to $\nu$. By construction $\pi_x = \alpha \gamma_x + (1-\alpha)\nu$ thus $\nu$ is absolutely continuous with respect to $\pi_x$ because $\alpha < 1$.

\textbf{Proof of $(i)\implies (ii)$.} Without loss of generality we can assume that $\int_X x \dd\mu = \int_Y x \dd\nu = 0$. In particular $0 \in X$.
Lemma \ref{lemma:support-distance} ensures the existence of $\delta_1 > 0$ such that $K = \mathrm{co}(\mathrm{supp}(\mu)) + \bar{B}(0,\delta_1) \subset \mathrm{int}(\mathrm{co}(\mathrm{supp}(\nu)))$. Note that we also have $0\in K$ because $\int_X x \dd\mu = 0$. Define the following subset of convex functions 
\begin{equation*}
    S = \left\{\phi \in \Gamma_{0,1}(X): \phi(0) = 0, \mathrm{diam}(\partial \phi(K)) \geq \frac{1}{2} \right\}.
\end{equation*}
Observe that $S$ is compact thanks to Arzelà-Ascoli theorem. Indeed, $S$ is precompact because all the functions in $S$ are $1$-Lipschitz and for every $x\in X$ the set $\{\phi(x)\mid \phi \in S\}$ is bounded because $\phi(0)=0$, $\phi$ is $1$-Lipschitz and $X$ is bounded. It remains to show that $S$ is closed. Take a sequence $\phi_n$ in $S$ which converges uniformly towards $\phi$. For $n\in \mathbb{N}$ there are $x_n,y_n \in K$ and  $p_n \in \partial \phi_n(x_n),q_n \in \partial \phi_n(y_n)$ such that $\Vert p_n - q_n \Vert \geq \frac{1}{2}$. Since $\phi_n$ is $1$-Lipschitz $p_n,q_n$ are bounded and thus we can extract a subsequence such that $p_n \to p$ and $q_n \to q$, moreover compactness of $K$ allows us to have $x_n \to x$ and $y_n \to y$ up to a subsequence. For $n \in \mathbb{N}$ we have for any $y \in X$
\begin{equation*}
    \langle p_n,y-x_n \rangle + \phi_n(x_n) \leq \phi_n(y).
\end{equation*}
Thus passing to the limit shows that $p \in \partial \phi(x)$ and similarly $q \in \partial \phi(y)$. Also by passing to the limit we have $\Vert p - q \Vert \geq \frac{1}{2}$ which ensures that $\mathrm{diam}(\partial \phi(K)) \geq \frac{1}{2}$. Thus $\phi \in S$ and $S$ is closed.

Let $\delta_2 = \inf_{\phi \in S} \int_Y \phi \dd\nu - \int_X \phi \dd\mu$. Note that $\delta_2 \geq 0$ because $\mu \leq_{cvx} \nu$. By compactness of $S$ there is $\phi \in S$ which achieves the infimum. Assume that $\delta_2 = 0$ then $\int_Y \phi \dd\nu -\int_X \phi \dd\mu = 0$. Let $\pi \in \Pi(\mu,\nu)$ be a martingale coupling between $\mu$ and $\nu$ such that, for $\mu$ almost every $x$, $\nu$ is absolutely continuous with respect to $\pi_x$ which is possible thanks to irreducibility (Definition \ref{def:irreducibility}). Then we have 
\begin{equation*}
    \int_{X\times Y} (\phi(y) - \phi(x) -  p(x)\cdot (y-x)) \dd\pi(x,y) = 0
\end{equation*}
where $p$ is a measurable selection in $\partial \phi$, because $\int_Y (y-x) \dd\pi_x = 0$ for $\mu$ almost every $x$. Thus there is $x$ such that 
\begin{equation*}
\int_Y (\phi(y) - \phi(x) - p(x)\cdot (y-x)) \dd\pi_x(y) = 0
\end{equation*}
which implies that $p(x) \in \partial \phi(y)$ for $\pi_x$ almost every $y$. Since $\nu$ is absolutely continuous with respect to $\pi_x$, we deduce that $p(x) \in \partial \phi(y)$ for $\nu$ almost every $y$. The upper hemicontinuity of the subdifferential grants that $p(x) \in \partial \phi(y)$ for every $y \in \mathrm{supp}(\nu)$. And by convexity of $\phi$ we deduce that $p(x) \in \partial \phi(y)$ for every $y \in \mathrm{co}(\mathrm{supp}(\nu))$. It is classical that we have $\partial \phi(y) = \{p(x)\}$ for every $y \in \mathrm{int}(\mathrm{co}(\mathrm{supp}(\nu)))$ because $\phi$ coincides with an affine function on the interior. In particular we obtain that $\partial \phi(y) = \{p(x)\}$ for every $y \in K$ which contradicts $\mathrm{diam}(\partial \phi(K))\geq \frac{1}{2}$. We have proven by contradiction that $\delta_2 > 0$.

Set $\delta = \delta_1 \wedge \delta_2 > 0$. Let $\bar{\mu} \in \mathcal{P}(X)$ be such that $W_\infty(\bar{\mu},\mu) \leq \delta$ and $\int_X x \dd\bar{\mu} = \int_X x \dd\mu$. Because $\delta \leq \delta_1$ we have $\mathrm{supp}(\bar{\mu}) \subset K$. Let $\phi \in \Gamma_{0,1}(X)$, we will show that $\int_X \phi \dd\bar{\mu} \leq \int_Y \phi \dd\nu$. We distinguish two cases depending on the diameter of the subdifferential of $\phi$ on $K$.

\emph{First case $\mathrm{diam}(\partial \phi(K)) = 0$.} Then $\phi$ is affine on $K$ and thus $\int_X \phi \dd\bar{\mu} = \int_X \phi \dd\mu$ because $\int_X x \dd\bar{\mu} = \int_X x \dd\mu$. Since $\mu\leq_{cvx} \nu$ we have $\int_X \phi \dd\mu \leq \int_Y \phi \dd\nu$ and we deduce that $\int_X \phi \dd\bar{\mu} \leq \int_Y \phi \dd\nu$.

\emph{Second case $\mathrm{diam}(\partial \phi(K)) > 0$.} Set $\Delta = \mathrm{diam}(\partial \phi(K))> 0$. Let $p_0 \in \partial \phi(0)$, denote $\bar{\phi}$ the function $\frac{1}{2\Delta}(\phi - \langle p_0,\cdot\rangle )$ and define $\psi$ by infimal convolution between $\bar{\phi}$ and the norm
\begin{equation*}
\psi = \left(\frac{1}{2\Delta}(\phi - \langle p_0,\cdot\rangle )\right) \square \vert \cdot \vert.
\end{equation*}
Note that, by construction,  $\bar{\phi}$ is nonnegative and $\bar{\phi}(0)=0$ so that $0 \in \partial \bar{\phi}(0)=0$. Observe that $\mathrm{diam}(\partial \bar{\phi}(K)) = \frac{1}{2}$ and $0 \in \partial \bar{\phi}(K)$ because $0 \in K$ since $\int_X x \dd\mu = 0$. This implies that $\partial \bar{\phi}(K) \subset \bar{B}(0,\frac{1}{2})$ and that $\bar{\phi}$ is $\frac{1}{2}$-Lipschitz on $K$.  Note that $0 \leq \psi \leq \bar{\phi}$ so that $\psi(0)=0$ hence $\psi \in \Gamma_{0,1}(X)$ because infimal convolution preserves convexity and the infimal convolution with the norm enforce $1$-Lipschitz regularity. Our aim now is to show that $\psi \in S$ so it remains to to show that $\mathrm{diam}(\partial \psi(K)) \geq \frac{1}{2}$. Let $x \in K$ and $p \in \partial \bar{\phi}(x)$ then 
 for every $z\in X$, we have
\begin{align*}
    \psi(z)&=\min_{y\in X}\{ \bar{\phi}(y)+ \vert z-y\vert\}
    \geq  \bar{\phi}(x)+\min_{y\in X}\{ p\cdot (y-x)+ \vert z-y\vert\}\\
    & =\bar{\phi}(x) + p\cdot (z-x)+ \min_{y\in X}\{ p\cdot (y-z)+ \vert z-y\vert\}\\
    & \geq \bar{\phi}(x) + p\cdot (z-x) \geq \psi(x) + p\cdot (z-x)
\end{align*}
where we used  that $\vert p \vert \leq \frac{1}{2}\leq 1$ and $\psi(x) \leq \bar{\phi}(x)$ in the last line. This shows  that $p\in \partial \psi(x)$ (and also that $\psi$ and $\bar {\phi}$ actually agree on $K$). 
Thus  $\partial \bar{\phi}(K) \subset \partial \psi(K)$ and  $\mathrm{diam}(\partial \psi(K)) \geq  \mathrm{diam}(\partial \bar{\phi}(K)) = \frac{1}{2}$, which concludes the proof of $\psi \in S$. We then have the following sequence of inequalities
\begin{equation*}
 \int_X \bar{\phi} \dd\bar{\mu} \leq \int_X \psi \dd\bar{\mu} \leq \int_X \psi \dd\mu + W_1(\bar{\mu},\mu) \leq \int_X \psi \dd\mu + \delta_2 \leq \int_Y \psi \dd\nu \leq \int_Y \bar{\phi} \dd\nu
\end{equation*}
where the first inequality holds because $\bar{\phi} = \psi$ on $K$ which contains the support of $\bar{\mu}$, the second inequality holds because $\psi$ is $1$-Lipschitz, the third inequality holds because $W_1(\bar{\mu},\mu) \leq W_\infty(\bar{\mu},\mu) \leq \delta_2$, the fourth inequality holds because $\psi \in S$ and $\delta_2$ is the infimum of $\int_Y h \dd\nu- \int_X h \dd\mu$ over $h \in S$ and the last inequality holds because $\bar{\phi} \geq \psi$ by construction. Finally, since $\bar{\mu},\mu$ and $\nu$ all have the same mass and mean we have $\int_X \phi \dd\bar{\mu} \leq \int_X \phi \dd\nu$. In both cases we have $\int_X \phi \dd\bar{\mu} \leq \int_Y \phi \dd\nu$ for any $\phi \in \Gamma_{0,1}(X)$ i.e. $\bar{\mu} \leq_{cvx} \nu$. This concludes the proof of $(i)\implies (ii)$.
\end{proof}

\subsection{Vector quantile regression}\label{par-vqr}

Another class of moment constraints is the one arising in vector quantile regression \cite{CCG}. This a special case where the moment constraint only depends on $y$ and we will replace, slightly abusively, $g(x,y)$ by $g(y)$. Note that it is necessary that $\int_Y g(y) \dd\nu(y) = 0$ for the moment constraint to be feasible. In this situation Definition \ref{def:slater} is the consequence of a more conventional condition which is the non degeneracy of the moment constraint.

\begin{proposition}
    If $\int_Y g(y) \dd\nu(y) = 0$ and $\int_Y g(y)\otimes g(y) \dd\nu(y)$ is invertible then
    the moment constraint $g$ is qualified with respect to $(\mu,\nu)$.
\end{proposition}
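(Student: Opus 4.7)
The plan is to use an explicit density construction. The first bullet of Definition \ref{def:slater} holds trivially: since $g$ depends only on $y$, for any $\pi \in \Pi(\mu,\nu)$ we have
\begin{equation*}
\int_{X\times Y} g(y) \dd\pi(x,y) = \int_Y g(y) \dd\nu(y) = 0
\end{equation*}
by hypothesis.

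For the second bullet, I would build $\pi$ as an absolutely continuous perturbation of the product coupling $\mu \otimes \nu$. Set $A := \int_Y g(y) \otimes g(y) \dd\nu(y)$, which by assumption is invertible (and, being a Gram matrix, positive definite) symmetric $N\times N$. Given $r \in L^\infty(\mu;\R^N)$ with $\int r \dd\mu = 0$, define
\begin{equation*}
\rho(x,y) := 1 + \langle A^{-1} r(x), g(y)\rangle, \qquad \dd\pi(x,y) := \rho(x,y) \dd\mu(x) \dd\nu(y).
\end{equation*}

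The marginal and moment checks are then direct: using $\int_Y g \dd\nu = 0$, integrating in $y$ gives $\int_Y \rho(x,\cdot)\dd\nu = 1$, so the first marginal of $\pi$ is $\mu$; using $\int_X r \dd\mu = 0$, integrating in $x$ gives $\int_X \rho(\cdot,y)\dd\mu = 1$, so the second marginal is $\nu$. The defining property of $A$ then yields
\begin{equation*}
\int_Y g(y) \dd\pi_x(y) = \int_Y g \dd\nu + A A^{-1} r(x) = r(x),
\end{equation*}
which is the required conditional moment identity.

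The only nontrivial point, and the one to pin down carefully, is nonnegativity of $\rho$, so that $\pi$ is a genuine probability measure. This is where the Slater-type smallness of $\eta$ enters. Since $g$ is continuous on the compact set $X\times Y$, it is bounded, and Cauchy--Schwarz yields
\begin{equation*}
|\langle A^{-1} r(x), g(y)\rangle| \leq \|A^{-1}\|_{\mathrm{op}} \, \|g\|_\infty \, \|r\|_\infty.
\end{equation*}
Choosing $\eta := (\|A^{-1}\|_{\mathrm{op}} \|g\|_\infty)^{-1}$ (or any smaller positive number) ensures $\rho \geq 0$ whenever $\|r\|_\infty \leq \eta$, completing the argument. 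Everything else is routine bookkeeping.
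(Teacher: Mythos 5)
Your proof is correct, and it takes a genuinely different and more direct route than the paper's. The paper reduces to the special case $g=\mathrm{id}$, proves that the pair $(\delta_0,\nu)$ is irreducible and non-degenerate, invokes Theorem \ref{thm:stab-irred-support} to conclude $W_\infty$-stability, and then transfers back to general $g$ via the pushforward $\nu_g=g_\#\nu$ and a gluing construction with $\tau=(g,\mathrm{id})_\#\nu$. Your argument bypasses all of that machinery: you simply write down the explicit density $\rho(x,y)=1+\langle A^{-1}r(x),g(y)\rangle$ with $A=\int g\otimes g\,\dd\nu$, verify the marginals from $\int_Y g\,\dd\nu=0$ and $\int_X r\,\dd\mu=0$, read off the conditional moment identity $\int_Y g\,\dd\pi_x=AA^{-1}r(x)=r(x)$, and then pin down the nonnegativity of $\rho$ (the only place a smallness condition on $\|r\|_\infty$ is needed) via Cauchy--Schwarz, giving the explicit constant $\eta=(\|A^{-1}\|_{\mathrm{op}}\,\|g\|_\infty)^{-1}$. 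What your route buys is brevity, self-containedness, and an explicit formula for both $\pi$ and $\eta$; what the paper's route buys is a demonstration that the vector-quantile-regression hypotheses fit into the irreducibility/non-degeneracy/$W_\infty$-stability framework developed for the martingale case, which is thematically useful elsewhere in the paper but is overkill for proving this particular proposition. Both are valid; yours is the cleaner proof of this statement in isolation.
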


\begin{proof}
    First observe that the structure of the moment constraints ensures that $\int_{X\times Y} g \dd\pi = 0$ for every $\pi \in \Pi(\mu,\nu)$. Before proving the result for a general $g$ we will show the result when $g$ is the identity. Thus $\nu$ has mean zero and its variance covariance matrix is invertible. By Jensen's inequality $\delta_0 \leq_{\mathrm{cvx}} \nu$. We will show that the pair $(\delta_0,\nu)$ is irreducible and nondegenerate. For irreducibility observe that $\delta_0 \otimes \nu$ is a martingale transport between $\delta_0$ and $\nu$ which obviously satisfies the irreducibility condition. Since the variance covariance matrix of $\nu$ is invertible we have $0 \in \mathrm{int} C_\nu$ where $C_\nu$ is the convex hull of the support of $\nu$. Theorem \ref{thm:stab-irred-support} ensures that the pair $(\delta_0,\nu)$ is $W_\infty$ stable. Let $\eta > 0$ be given by the stability. Let $r \in L^\infty(\mu)$ be such that $\int_X r \dd\mu=0$ and $\Vert r \Vert_{L^\infty(\mu)} \leq \eta$. Then $\mathrm{supp}(r_\# \mu) \subset \bar{B}(0,\eta)$ thus $W_\infty(\delta_0,r_\# \mu) \leq \eta$. Moreover $\int_X x dr_\# \mu = 0$. Thus by $W_\infty$ stability $r_\# \mu \leq_{\mathrm{cvx}} \nu$. Let $\gamma \in \Pi(r_\# \mu,\nu)$ be a martingale coupling. Define $\pi = \mu \otimes \gamma_{r(x)}$, then $\pi \in \Pi(\mu,\nu)$ and 
    \begin{equation*}
        \int_Y y \dd\pi_x = \int_Y y \dd\gamma_{r(x)} = r(x).
    \end{equation*}
    This proves that the moment constraint is qualified, with an $\eta$ depending only on $\nu$.
    
    Now assume that $g$ is a continuous function defined over $Y$. The measure $\nu_g = g_{\#} \nu$ has mean zero and by hypothesis its variance covariance matrix is invertible. Let $\eta > 0$ be given by the first part of this proof for $\nu_g$ and the identity moment constraint. Let $r \in L^\infty(\mu)$ be such that $\int_X r \dd\mu = 0$ and $\Vert r \Vert_{L^\infty(\mu)} \leq \eta$. By the first part of the proof there is $\gamma \in \Pi(\mu,\nu_g)$ such that $\int_{\mathbb{R}^N} a \dd\gamma_x(a) = r(x)$. Denote by $\tau$ the transport plan $(g,\mathrm{id})_\# \nu$ between $\nu_g$ and $\nu$ and define $\pi$ as
    \begin{equation*}
        \int_{X\times Y} f(x,y) \dd\pi(x,y) = \int_X \int_{\mathbb{R}^N} \int_Y f(x,y) \dd\tau_a(y) \dd\gamma_x(a) \dd\mu(x), \; \forall f\in C(X\times Y).
    \end{equation*}
    First we show that $\pi \in \Pi(\mu,\nu)$. Obviously the first marginal of $\pi$ is $\mu$ by construction. For the second marginal take $f \in C(Y)$ then
    \begin{align*}
        \int_{X\times Y} f(y) \dd\pi(x,y) &= \int_{X\times \mathbb{R}^N} \int_Y f(y) \dd\tau_a(y) \dd\gamma(x,a)
        = \int_{\mathbb{R}^N} \int_Y f(y) \dd\tau_a(y) \dd\nu_g(a)\\
        &= \int_{\mathbb{R}^N \times Y} f(y) \dd\tau(a,y) = \int_Y f(y) \dd\nu(y).
    \end{align*}
    Observe that $\int_Y g(y) \dd\tau_a(y) = a$. Indeed let $f \in C(\mathbb{R}^N,\mathbb{R}^N)$ then
    \begin{equation*}
        \int_{\mathbb{R}^N \times Y} f(a) \cdot g(y) \dd\tau(a,y) = \int_Y f(g(y)) \cdot g(y) \dd\nu(y) =  \int_{\mathbb{R}^N} f(a) \cdot a \dd\nu_g(a) 
    \end{equation*}
   which proves that $\int_Y g(y) \dd\tau_a(y) = a$ for $\nu_g$ almost every $a$. Denote by $A$ the set of $a$ for which the equality doesn't hold. Then $0 = \nu_g(A) = \gamma(X\times A)$ which implies that for $\mu$ almost every $x$ the equality $\int_Y g(y) \dd\tau_a(y) = a$ holds $\gamma_x$ almost surely.
    We now compute the value of the conditional moment
    \begin{equation*}
        \int_Y g(y) \dd\pi_x(y) = \int_{\mathbb{R}^N} \int_Y g(y) \dd\tau_a(y) \dd\gamma_x(a) = \int_{\mathbb{R}^N} a \dd\gamma_x(a) = r(x)
    \end{equation*}
    because $\int_Y g(y) \dd\tau_a(y) = a$ for $\gamma_x$ almost every $a$ as shown above. 
\end{proof}

\subsection{Slater type condition and approximation with finite entropy}

In this paragraph, we present some technical results which will be useful in the sequel and illustrate the utility of our qualification condition. 

The following lemma shows that qualification implies that $\nu$ gives positive mass to super level sets of the components of $g$. More precisely, we have

\begin{lemma}\label{lemma:moment_on_a_set}
    Assume that $\mu$ is not a Dirac mass.
    If the moment constraint is qualified (Definition \ref{def:slater}) there is $\rho > 0$ such that for any $e\in S^{N-1}(0,1)$ and $x \in \mathrm{supp}(\mu)$ there is $A$ measurable such that $\nu(A) \geq \rho$ and $g(x,y)\cdot e  \geq \rho$ for every $y \in A$.
\end{lemma}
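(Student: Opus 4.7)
The plan is to argue by contradiction, combining a compactness extraction with a test function built from the qualification hypothesis. If the conclusion fails, there exist sequences $x_n \in \mathrm{supp}(\mu)$, $e_n \in S^{N-1}(0,1)$ and $\rho_n \downarrow 0$ with $\nu(\{y\in Y : g(x_n,y)\cdot e_n \geq \rho_n\}) < \rho_n$. Extracting convergent subsequences $x_n \to x_\infty \in \mathrm{supp}(\mu)$ and $e_n \to e_\infty \in S^{N-1}(0,1)$ and using uniform continuity of $g$, I would pass to the limit and conclude that $g(x_\infty,y)\cdot e_\infty \leq 0$ for $\nu$-almost every $y$, and hence for every $y \in \mathrm{supp}(\nu)$ by continuity.

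Next I would leverage Definition \ref{def:slater} to build a test function that contradicts this bound. Because $\mu$ is not a Dirac, I can pick $x_1 \in \mathrm{supp}(\mu)$ distinct from $x_\infty$ and fix disjoint open neighborhoods $U_\infty = B(x_\infty,\delta)$ (with $\delta$ small) and $U_1 \ni x_1$; setting $\alpha_\infty := \mu(U_\infty) > 0$ and $\alpha_1 := \mu(U_1) > 0$ I would define
\[
r(x) = s\,\mathbf{1}_{U_\infty}(x)\,e_\infty \; - \; s\,\frac{\alpha_\infty}{\alpha_1}\,\mathbf{1}_{U_1}(x)\,e_\infty, \qquad s = \eta\min\!\Big(1,\tfrac{\alpha_1}{\alpha_\infty}\Big),
\]
which satisfies $\int_X r\,\dd\mu = 0$ and $\|r\|_\infty \leq \eta$ by construction. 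Qualification then provides $\pi \in \Pi(\mu,\nu)$ with $\int_Y g(x,y)\cdot e_\infty\,\dd\pi_x(y) = s$ for $\mu$-almost every $x \in U_\infty$.

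To close the argument I would observe that $\nu = \int_X \pi_x\,\dd\mu(x)$ forces $\pi_x(Y\setminus \mathrm{supp}(\nu)) = 0$ for $\mu$-a.e. $x$. Combined with $g(x_\infty,\cdot)\cdot e_\infty \leq 0$ on $\mathrm{supp}(\nu)$ and the uniform continuity of $g$, this yields $\int_Y g(x,y)\cdot e_\infty\,\dd\pi_x(y) \leq \omega_g(\delta)$ on $U_\infty$, hence $s \leq \omega_g(\delta)$.

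The main obstacle, and the reason the non-Dirac hypothesis is invoked, lies in checking that $s$ stays bounded below independently of $\delta$. If $\mu$ charges $\{x_\infty\}$ then $\alpha_\infty \geq \mu(\{x_\infty\}) > 0$ and $s \geq \eta\alpha_1$; if $\mu(\{x_\infty\}) = 0$ then $\alpha_\infty \downarrow 0$ and eventually $s = \eta$. In both regimes $s \geq \eta\alpha_1 > 0$ while $\omega_g(\delta) \to 0$ as $\delta \to 0$, which delivers the desired contradiction and yields the uniform $\rho$ claimed in the statement.
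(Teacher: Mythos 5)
Your proof is correct, and it takes a genuinely different route from the paper's. The paper argues directly: using that $\mu$ is not a Dirac it fixes $\delta,\kappa>0$ with $\mu(B(x,\kappa))\leq 1-\delta$ for all $x\in\mathrm{supp}(\mu)$, takes the test function $r=\eta e\bigl(\mu(B(x_0,\kappa)^c)\mathbf 1_{B(x_0,\kappa)}-\mu(B(x_0,\kappa))\mathbf 1_{B(x_0,\kappa)^c}\bigr)$, and then runs a Chebyshev-type estimate on $\int_Y g(x,y)\cdot e\,\dd\pi_x(y)$ together with the modulus of continuity to produce an explicit $\rho$ (roughly $\min\bigl(\tfrac{\eta\delta}{4},\ \tfrac{\eta\delta}{2\|g\|_\infty}\inf_{x_0}\mu(B(x_0,\tilde\kappa))\bigr)$). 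You instead argue by contradiction: compactness of $\mathrm{supp}(\mu)\times S^{N-1}$ reduces the negation of the claim to $g(x_\infty,\cdot)\cdot e_\infty\leq 0$ on all of $\mathrm{supp}(\nu)$, and then a two-bump test function built from a second support point $x_1$ and the qualification hypothesis forces $\eta\alpha_1\leq s\leq\omega_g(\delta)$, which fails for small $\delta$. Both proofs use the non-Dirac hypothesis in the same essential way, namely to make $r$ large near a chosen point while keeping it mean-zero with $\|r\|_\infty\leq\eta$; the paper's version buys an explicit, quantitative $\rho$ whereas yours is more conceptual (it isolates the clean limiting obstruction $g(x_\infty,\cdot)\cdot e_\infty\leq 0$) at the cost of giving only the existence of $\rho$. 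One small point you should make explicit: $U_1$ must be fixed before shrinking $\delta$ (e.g.\ $U_1=B(x_1,r_1)$ with $r_1<\tfrac12 d(x_1,x_\infty)$), so that the disjointness $U_1\cap U_\infty=\emptyset$ is preserved and $\alpha_1$ really is a constant independent of $\delta$; as written this is implied but not stated.
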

\begin{proof}
    Since $\mu$ is not a Dirac mass there is $\delta > 0$ and $\kappa>0$ such that $\mu(B(x,\kappa)) \leq 1-\delta$ for every $x \in \mathrm{supp}(\mu)$. Indeed, assume by contradiction that there is no such $\delta$ and $\kappa$. Then there is a bounded sequence $\kappa_n \to 0$, a sequence $\delta_n \to 0$ and a sequence of $x_n \in \mathrm{supp}(\mu)$ such that $\mu(B(x_n,\kappa_n)) \geq 1-\delta_n$. Up to a subsequence we can assume that $x_n$ converges to $x \in \mathrm{supp}(\mu)$ by compactness of the support. By outer regularity of $\mu$ we have that $\mu(\{x\})= 1$ which contradicts the fact that $\mu$ is not a Dirac. Set $0 < \tilde{\kappa} \leq \kappa$ such that $\omega_g(\tilde{\kappa}) \leq \frac{\eta\delta}{4}$ where $\omega_g$ is the modulus of continuity of $g$.
    Now for $e \in S^{N-1}(0,1)$ and $x_0 \in \mathrm{supp}(\mu)$ define $r:= \eta e (\mu(B(x_0,\kappa)^c)1_{B(x_0,\kappa)} - \mu(B(x_0,\kappa)) 1_{B(x_0,\kappa)^c})$ where $\eta > 0$ is given by the qualification of the moment constraint. Then $\int_X r \dd\mu = 0$ and $\Vert r \Vert_\infty \leq \eta$. By the qualification assumption there is $\pi \in \Pi(\mu,\nu)$ such that $\int_Y g(x,y) \dd\pi_x(y) = r(x)$ for $\mu$ almost every $x$. Thus for $x \in B(x_0,\kappa)$ we almost surely have
    \begin{equation*}
        \eta \delta \leq \int_Y  g(x,y)\cdot e  \dd\pi_x(y)  \leq \frac{\eta \delta}{2} + \Vert g \Vert_\infty \pi_x\left(\left\{y : g(x,y)\cdot e \geq \frac{\eta \delta}{2}\right\}\right).
    \end{equation*}
   By definition of the modulus of continuity for $x \in B(x_0,\tilde{\kappa})$, and since $\omega_g(\tilde{\kappa}) \leq \frac{\eta\delta}{4}$, we have
    \begin{equation*}
        \left\{y: g(x,y)\cdot e  \geq \frac{\eta \delta}{2}\right\} \subset \left\{y :  g(x_0,y)\cdot e \geq \frac{\eta \delta}{4}\right\}.
    \end{equation*}
    Thus since $ B(x_0,\tilde{\kappa}) \subset B(x_0,\kappa)$ integration grants
    \begin{equation*}
        \frac{\eta \delta}{2\Vert g \Vert_\infty} \mu(B(x_0,\tilde{\kappa})) \leq \nu\left(\left\{y:  g(x_0,y)\cdot e  \geq \frac{\eta \delta}{4}\right\}\right).
    \end{equation*}
    Finally note that $\mu(B(x_0,\tilde{\kappa}))$ is uniformly lower bounded for $x_0 \in \mathrm{supp}(\mu)$. Otherwise by compactness of the support there would be $x_0$ in the support such that $\mu(B(x_0,\tilde{\kappa})) = 0$ which contradicts the definition of the support.
\end{proof}

We now turn our attention to the approximation of plans by plans with restricted entropy. This is a key step in the proof of dual attainment for weak optimal transport problems with an additional entropy term. We will show that it is possible to qualify the moment constraint with transport plans of bounded entropy. In order to do so we present a finite entropy approximation of transport plans which is adapted to moment constraint. This approximation, which we call sliced approximation, is a modification of the block approximation introduced in \cite{CDPS17}.

\begin{definition}
\label{def:sliced_approx} Assume that $Y$ is a compact subset of $\R^d$.
     For $i = (i_1,\ldots,i_d) \in \mathbb{Z}^d$ define $Q_i = [i_1,i_1+1) \times \ldots \times [i_d,i_d+1)$. For $\delta > 0$ we write $Q_i^\delta = \delta \cdot Q_i$. Let $\pi \in \Pi(\mu,\nu)$ and $\de>0$. The $\delta$-sliced approximation of $\pi$ denoted $\pi^\delta$ is defined by its disintegration
    \begin{equation*}
       \pi^\de = \mu \otimes \pi^\de_x \quad \text{and}\quad \dd\pi^\delta_x = \sum_{i \in \mathbb{Z}^d,\nu(Q_i^\de)>0} 1_{Q_i^\delta} \frac{\pi_x(Q_i^\delta)}{\nu(Q_i^\delta)} \dd\nu.
    \end{equation*}
    Note that the conditional probability $\pi^\de_x$ is well defined because the support of $\nu$ is compact and thus the sum is finite.
\end{definition}
\begin{lemma}\label{lem:sliced-entropy} Assume that $Y$ is a compact subset of $\R^d$. For any $\delta \in (0,1)$ the $\de$-sliced approximation $\pi^\de$ of any $\pi \in \Pi(\mu,\nu)$ satisfies :
    \begin{itemize}
        \item $W_\infty(\pi,\pi^\de) \leq \sqrt{d}\delta$ as well as $W_\infty(\pi_x,\pi_x^\delta) \leq \sqrt{d}\delta$ for $\mu$ almost every $x$,
        \item $\pi^\delta \in \Pi(\mu,\nu)$,
        \item $H(\pi^\de \mid \mu \otimes \nu) \leq -d\ln(\de)+d\ln\left(\mathrm{diam}_\infty(Y)+2\right) $.
    \end{itemize}
\end{lemma}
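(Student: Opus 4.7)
I would verify the three claims in the order (ii), (i), (iii), treating them essentially independently.

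For (ii) (marginals), the first marginal is $\mu$ directly from the disintegration $\pi^\delta=\mu\otimes \pi^\delta_x$. For the second marginal, take $f\in C(Y)$ and compute
\[
\int f\,\dd\pi^\delta=\int_X\sum_{i:\nu(Q_i^\delta)>0}\frac{\pi_x(Q_i^\delta)}{\nu(Q_i^\delta)}\int_{Q_i^\delta} f\,\dd\nu\,\dd\mu(x).
\]
Swapping the sum and integrals (finite sum) and using that $\pi\in\Pi(\mu,\nu)$ gives $\int_X\pi_x(Q_i^\delta)\dd\mu(x)=\pi(X\times Q_i^\delta)=\nu(Q_i^\delta)$, so the expression collapses to $\sum_i\int_{Q_i^\delta}f\dd\nu=\int_Y f\dd\nu$.

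For (i) ($W_\infty$ bound), I construct an explicit coupling at the level of conditionals. For $\mu$-a.e.\ $x$ and each $i$ with $\nu(Q_i^\delta)>0$, take the product coupling between the restricted measures $\pi_x\lfloor_{Q_i^\delta}$ and $\pi_x(Q_i^\delta)\cdot\nu\lfloor_{Q_i^\delta}/\nu(Q_i^\delta)$, then sum over $i$ to obtain $\gamma_x\in\Pi(\pi_x,\pi^\delta_x)$. By construction $\gamma_x$ charges only pairs $(y,y')$ lying in a common cube of $\ell^\infty$-diameter $\delta$, so $|y-y'|\leq\sqrt d\,\delta$ $\gamma_x$-a.s., proving $W_\infty(\pi_x,\pi^\delta_x)\leq\sqrt d\,\delta$. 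Lifting this to $\pi$ and $\pi^\delta$ by coupling on the diagonal in $x$, i.e.\ considering $\mu\otimes\gamma_x$ as a coupling on $(X\times Y)^2$ in which the two $X$-coordinates coincide, gives $W_\infty(\pi,\pi^\delta)\leq\sqrt d\,\delta$.

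For (iii) (entropy bound), this is where the sign book-keeping is the most delicate point. Write $a_i(x):=\pi_x(Q_i^\delta)$ and $b_i:=\nu(Q_i^\delta)$. Since $\dd\pi^\delta_x/\dd\nu=\sum_i(a_i(x)/b_i)\,\mathbf{1}_{Q_i^\delta}$ is piecewise constant, a direct computation gives
\[
H(\pi^\delta_x\mid\nu)=\sum_{i:b_i>0} a_i(x)\log\frac{a_i(x)}{b_i}.
\]
Integrating with respect to $\mu$ and using $\int a_i(x)\dd\mu(x)=b_i$, I decompose
\[
H(\pi^\delta\mid\mu\otimes\nu)=\sum_i\int_X a_i(x)\log a_i(x)\,\dd\mu(x)-\sum_i b_i\log b_i.
\]
The first sum is $\leq0$ since $a_i(x)\in[0,1]$ and $t\log t\leq 0$ on $[0,1]$. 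The second, $-\sum_ib_i\log b_i$, is the discrete entropy of $(b_i)_{i\in I}$ where $I=\{i:b_i>0\}$; by Jensen (or the maximum-entropy principle) it is bounded by $\log|I|$. Finally, a cube $Q_i^\delta$ meets $Y$ only if each coordinate $i_k$ ranges over at most $\lfloor\mathrm{diam}_\infty(Y)/\delta\rfloor+1$ values, so $|I|\leq(\mathrm{diam}_\infty(Y)/\delta+1)^d$. Taking logarithms and using $\delta<1$ to replace $\mathrm{diam}_\infty(Y)+\delta$ by $\mathrm{diam}_\infty(Y)+1$ yields exactly the claimed bound $-d\log\delta+d\log(\mathrm{diam}_\infty(Y)+1)$.

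The only step that requires genuine care is the entropy estimate in (iii): one must split it into a nonpositive continuous part and a discrete entropy bounded by the geometric count of cubes intersecting $Y$, rather than trying to bound $H(\pi^\delta_x\mid\nu)$ pointwise.
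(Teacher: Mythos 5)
Your proposal is correct and follows essentially the same approach as the paper's proof: checking marginals directly, bounding $W_\infty$ via the fact that $\pi_x$ and $\pi^\delta_x$ assign the same mass to each cube, and controlling the entropy by dropping the nonpositive $\sum_i a_i\log a_i$ term and bounding the remaining discrete entropy by a count of cubes. The only cosmetic differences are that the paper invokes the classical cube-mass fact for the $W_\infty$ bound where you build the coupling explicitly, and the paper bounds $H(\pi^\delta_x\mid\nu)$ pointwise before integrating rather than integrating first and then splitting, but both yield the identical estimate $-\sum_i b_i\log b_i\leq\log|I|$.
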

\begin{proof}
We begin by showing that $W_\infty(\pi,\pi^\delta)\leq \sqrt{d}\delta$. Since $\pi \in \Pi(\mu,\nu)$ then $\mu$ almost surely for every $i \in \mathbb{Z}^d$ (because it is a countable set) if $\nu(Q_i^\delta)=0$ then $\pi_x(Q_i^\delta) = 0$ and thus $\pi_x^\delta(Q_i^\delta) = 0$ else if $\nu(Q_i^\delta) > 0$ by integration $\pi_x(Q_i^\delta) = \pi_x^\delta(Q_i^\delta)$. In any case we obtain that for $\mu$ almost every $x \in X$ we have $\pi_x(Q_i^\delta) = \pi_x^\delta(Q_i^\delta)$ this classically implies that $W_\infty(\pi_x,\pi_x^\delta) \leq \max_i \mathrm{diam}(Q_i^\delta) \leq \sqrt{d}\delta$. Since by construction $\pi^\delta$ and $\pi$ share the same first marginal $\mu$ we have
\begin{equation*}
    W_\infty(\pi,\pi^\delta) \leq \mathrm{ess sup}_\mu W_\infty(\pi_x,\pi_x^\delta) \leq \sqrt{d}\delta.
\end{equation*}

We will now show that $\pi^\de \in \Pi(\mu,\nu)$. By construction the first marginal of $\pi^\delta$ is $\mu$. Let $f$ be a continuous function on $Y$ we have
\begin{align*}
    \int_Y f(y) \dd\pi^\de(x,y) &= \sum_{i \in \mathbb{Z}^d,\nu(Q_i^\de)>0} \int_X  \int_{Q_i^\delta} f(y) \frac{\pi_x(Q_i^\delta)}{\nu(Q_i^\delta)} \dd\nu(y) \dd\mu(x)\\
    &=  \sum_{i \in \mathbb{Z}^d,\nu(Q_i^\de)>0} \frac{\pi(X\times Q_i^\delta)}{\nu(Q_i^\delta)}  \int_{Q_i^\delta} f(y) \dd\nu(y) \\
    &= \sum_{i \in \mathbb{Z}^d,\nu(Q_i^\de)>0}   \int_{Q_i^\delta} f(y) \dd\nu(y) = \int_Y f(y) \dd\nu(y)
\end{align*}
where the third equality holds because the second marginal of $\pi$ is $\nu$. Thus $\pi^\de \in \Pi(\mu,\nu)$.

For the upper bound on the entropy first oberve that $H(\pi^\de\mid \mu \otimes \nu ) = \int_X H(\pi^\de_x\mid \nu) \dd\mu(x)$. For $x\in X$ we have
    \begin{equation*}
        H(\pi^\de_x\mid \nu) = \sum_{i \in \mathbb{Z}^d,\nu(Q_i^\de)>0} \pi_x(Q_i^\delta) \ln\left(\frac{\pi_x(Q_i^\delta)}{\nu(Q_i^\delta)}\right) \leq - \sum_{i \in \mathbb{Z}^d,\nu(Q_i^\de)>0} \pi_x(Q_i^\delta) \ln\left(\nu(Q_i^\delta)\right),
    \end{equation*}
   because $\pi_x(Q_i) \in [0,1]$ and $t\ln(t)\leq 0$ for $t \in [0,1]$. Integrating over $x$ with respect to $\mu$ we obtain
    $H(\pi^\de\mid \mu \otimes \nu) \leq -\sum_{i \in \mathbb{Z}^d} \nu(Q_i)\ln\left(\nu(Q_i)\right)$. By convexity of $t\mapsto t\ln(t)$ we have 
    \begin{equation*}
        -\sum_{i \in \mathbb{Z}^d} \nu(Q_i)\ln\left(\nu(Q_i)\right) \leq \ln(N_\delta)
    \end{equation*}
    where $N_\delta$ is the number of $Q_i^\delta$ such that $\nu(Q_i^\delta) > 0$. By construction $\bigcup_{i \in \mathbb{Z}^d,\nu(Q_i^\delta)>0} Q_i^\delta \subset Y + \bar{B}_\infty(0,\delta)$, thus since the $Q_i^\delta$ are disjoint and have the same Lebesgue measure we have $N_\delta \delta^d \leq \vert Y + \bar{B}_\infty(0,\delta)\vert \leq \left(\mathrm{diam}_\infty(Y)+2\delta\right)^d$. Finally, we obtain
    \begin{equation*}
        H(\pi^\de\mid \mu \otimes \nu) \leq -d\ln(\delta) + d\ln\left(\mathrm{diam}_\infty(Y)+2\right).
    \end{equation*}
\end{proof}

This sliced approximation allows us to limit ourselves to plans with restricted entropy (not only finite but with an a priori bound), as stated precisely in the following lemma:
\begin{lemma} \label{lem:slater-Hfinite}
    Assume that $Y$ is a compact subset of $\R^d$ and that the moment constraint is qualified with $\eta > 0 $. Let $\delta \in (0,1)$ be such that $\omega_g (\sqrt{d}\delta)\leq \frac{\eta}{2}$. Then for any $r \in L^\infty$ satisfying $\int_X r \dd\mu =0, \Vert r \Vert_\infty \leq \eta/2$ there is $\pi \in \Pi(\mu,\nu)$ such that $H(\pi \mid \mu \otimes \nu) \leq -d \ln(\delta)+d\ln\left(\mathrm{diam}_\infty(Y)+2\right)$ and $\int_Y g(x,y)\dd\pi_x(y) = r(x)$.
\end{lemma}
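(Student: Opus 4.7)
Rather than constructing $\pi$ by hand from the plan $\pi(r)$ given by qualification (slicing $\pi(r)$ produces the desired entropy bound but shifts the moment by up to $\omega_g(\sqrt d \,\delta) \leq \eta/2$, and this shift cannot be cancelled by convex combinations without rescaling the target), the plan is to apply a convex-geometric separation argument to the set of achievable moments
\[
K := \Big\{ x \mapsto \int_Y g(x,y) \,\dd\pi_x(y) : \pi \in \Pi(\mu,\nu),\ H(\pi \mid \mu \otimes \nu) \leq C_\delta \Big\}
\]
where $C_\delta := -d \ln \delta + d \ln(\mathrm{diam}_\infty(Y) + 1)$, viewed as a subset of the Banach space $L^\infty_0(\mu; \R^N) := \{r \in L^\infty(\mu; \R^N) : \int_X r \,\dd\mu = 0\}$. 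The inclusion $K \subset L^\infty_0$ is the first bullet of Definition \ref{def:slater}, and the lemma amounts to $B_{\eta/2}^{L^\infty_0} \subset K$, where $B_\rho^{L^\infty_0}$ denotes the closed $L^\infty$-ball of radius $\rho$.

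\textbf{Convexity and near-surjectivity onto $B_\eta$.} First, $K$ is convex: if $\pi_1, \pi_2$ are admissible with moments $r_1, r_2$, then $\pi_\lambda := \lambda \pi_1 + (1-\lambda) \pi_2 \in \Pi(\mu,\nu)$ has moment $\lambda r_1 + (1-\lambda) r_2$, and convexity of relative entropy yields $H(\pi_\lambda \mid \mu \otimes \nu) \leq C_\delta$. Next, for any $s \in B_\eta^{L^\infty_0}$, qualification supplies $\pi \in \Pi(\mu,\nu)$ with $\int_Y g(x,y)\,\dd\pi_x(y) = s(x)$; its sliced approximation $\pi^\delta$ satisfies $H(\pi^\delta \mid \mu \otimes \nu) \leq C_\delta$ and $W_\infty(\pi_x, \pi^\delta_x) \leq \sqrt d\,\delta$ $\mu$-a.e.\ by Lemma \ref{lem:sliced-entropy}, so
\[
\Big| \int_Y g(x,y) \,\dd\pi^\delta_x(y) - s(x) \Big| \leq \omega_g(\sqrt d\,\delta) \leq \eta/2.
\]
Hence the moment of $\pi^\delta$ lies in $K$ and differs from $s$ by an element of $B_{\eta/2}^{L^\infty_0}$, yielding $B_\eta^{L^\infty_0} \subset K + B_{\eta/2}^{L^\infty_0}$.

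\textbf{Closedness of $K$.} Let $r_n \in K$ converge to $r$ in $L^\infty$-norm, with $\pi_n \in \Pi(\mu,\nu)$ realizing $r_n$ and $H(\pi_n \mid \mu \otimes \nu) \leq C_\delta$. Weak compactness of entropy sublevel sets (together with the compactness of $\Pi(\mu,\nu)$) yields a weak limit $\pi$, and lower semicontinuity gives $H(\pi \mid \mu \otimes \nu) \leq C_\delta$. Testing the weak convergence against $(x,y) \mapsto h(x) g(x,y)$ for $h \in C(X)$ and combining with $r_n \to r$ in $L^1(\mu)$ yields
\[
\int_X h(x) r(x) \,\dd\mu(x) = \int_X h(x) \Big(\int_Y g(x,y) \,\dd\pi_x(y)\Big) \,\dd\mu(x).
\]
Density of $C(X)$ in $L^1(\mu)$ gives $r(x) = \int_Y g(x,y)\,\dd\pi_x(y)$ for $\mu$-a.e.\ $x$, so $r \in K$.

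\textbf{Hahn--Banach conclusion and main obstacle.} Suppose by contradiction that $r_0 \in B_{\eta/2}^{L^\infty_0} \setminus K$. Since $K$ is closed and convex in the Banach space $L^\infty_0(\mu; \R^N)$, Hahn--Banach furnishes a continuous linear functional $\ell$ with $\Vert \ell \Vert_{(L^\infty_0)^*} = 1$ and a constant $c$ with $\ell(r_0) > c \geq \sup_{r \in K} \ell(r)$. For any $u \in B_{\eta/2}^{L^\infty_0}$, $\ell(u) \leq \eta/2$, so $\sup_{K + B_{\eta/2}^{L^\infty_0}} \ell \leq c + \eta/2$. Since $B_\eta^{L^\infty_0} \subset K + B_{\eta/2}^{L^\infty_0}$ we get $\eta = \sup_{B_\eta^{L^\infty_0}} \ell \leq c + \eta/2$, forcing $c \geq \eta/2$; but $\ell(r_0) \leq \Vert r_0 \Vert_\infty \leq \eta/2 \leq c$, contradicting $\ell(r_0) > c$. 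Hence $r_0 \in K$, which is the content of the lemma. I expect the most delicate step to be the closedness of $K$: it requires simultaneously controlling weak convergence of plans in $\mathcal{P}(X \times Y)$ and convergence of moments in $L^\infty(\mu)$, and crucially depends on the lower semicontinuity and weak-compactness properties of the relative entropy.
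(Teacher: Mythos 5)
Your proof is correct, and it takes a genuinely different (and in some ways cleaner) route than the paper's. Both arguments hinge on separating $r$ from the same convex set $K$ of achievable bounded-entropy conditional moments, but the way the qualification hypothesis and the sliced approximation enter is organized differently. The paper establishes weak-$\ast$ compactness of $K$ (using metrizability of bounded sets in $L^\infty(\mu)$), separates in the weak-$\ast$ topology to obtain a concrete separating functional $l_0\in L^1(\mu)$, normalizes its median, invokes Lemma~\ref{lem:median} to manufacture an extremal test function $s\in B_1^{L^\infty_0}$, applies qualification to $\eta s$, slices the resulting plan, and finally chases an explicit chain of inequalities to a contradiction. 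You instead isolate the geometric content of that chain in the single containment $B_\eta^{L^\infty_0}\subset K+B_{\eta/2}^{L^\infty_0}$ (which is exactly the slicing-plus-modulus-of-continuity step, packaged), prove only norm-closedness of $K$ (weaker and a bit easier to justify than weak-$\ast$ compactness, and sufficient for your separation), and then run an abstract Hahn--Banach argument in $(L^\infty_0)^\ast$ that needs neither an $L^1$ representative of the separating functional, nor attainment of the sup, nor the median lemma. What the paper's route buys is concreteness (the functional $l_0$ is an honest $L^1$ function, and Lemma~\ref{lem:median} is reused elsewhere in Section~4); what your route buys is modularity and a shorter contradiction, at the cost of working in the less tangible dual $(L^\infty_0)^\ast$. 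Both are complete; your version would be a valid replacement for the paper's proof.
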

\begin{proof}
Let $\eta>0$ be given by the qualification of the moment constraint. Let $\delta \in (0,1)$ be such that $\omega_g(\sqrt{d}\delta) \leq \frac{\eta}{2}$.
Assume by contradiction that there is $r \in L^\infty(\mu)$ such that $\int_X r \dd\mu = 0$, $\Vert r \Vert_\infty \leq \frac{\eta}{2}$ and there is no $\pi \in \Pi(\mu,\nu)$ satisfying $H(\pi \mid \mu \otimes\nu ) \leq -d\ln(\delta)+d\ln\left(\mathrm{diam}_\infty(Y)+2\right)$ such that $\int_Y g(x,y) \dd\pi_x(y) = r(x)$ for $\mu$ almost every $x$.  Define $K$ to be the set of conditional moments:
\begin{equation*}
    K = \left\{x \mapsto \int_Y g(x,y) \dd\pi_x(y) :  \pi \in \Pi(\mu,\nu), H(\pi\mid\mu\otimes \nu)\leq -d\ln(\delta)+d\ln\left(\mathrm{diam}_\infty(Y)+2\right)\right\}.
\end{equation*}
Note that $K$ is convex and it is the image of 
\begin{equation*}
    P = \left \{\pi \in \PP(X\times Y) \mid \pi \in \Pi(\mu,\nu), H(\pi\mid\mu\otimes \nu)\leq -d\ln(\delta)+d\ln\left(\mathrm{diam}_\infty(Y)+2\right) \right\}
\end{equation*}
under the map $M: \pi \mapsto (x \mapsto \int_Y g(x,y) \dd\pi_x(y))\in L^{\infty}((X, \mu), \R^N)$. It is easy to see that the map $M$ is weak-$\ast$/weak-$\ast$ sequentially continuous. The entropy is weak-$\ast$ lower semicontinuous and thus its sublevel set are weak-$\ast$ closed. The set $\Pi(\mu,\nu)$ is weak-$\ast$ sequentially compact. Since $\mu \otimes \nu \in P$, the set $P$ is non empty and it is weak-$\ast$ sequentially compact as the intersection of a sequentially compact and a sequentially closed set. Thus $K$ is weak-$\ast$ sequentially compact being the  image of a non empty weak-$\ast$ sequentially compact set under a sequentially continuous map. Note that $K$ is bounded because $g$ is bounded. Bounded sets in $L^\infty(\mu)$ are metrizable for the weak-$\ast$ topology because $L^1(\mu)$ is separable due to the compactness of $X$. Thus $K$ is weak-$\ast$ compact due to the metrizability of the weak-$\ast$ topology on $K$.

Since $K$ is convex and compact for the weak-$\ast$ topology Hahn-Banach's theorem ensures the existence of $l_0 \in L^1(\mu)$ which strictly separates $K$ from $\{ r\}$ i.e. $\int_X l_0(x) r(x) \dd\mu(x) > \sup_{k \in K} \int_X l_0(x) k(x) \dd\mu(x)$. 
Note that since all elements of $K$ and $r$ have zero mean $l_0$ can be taken to have a median equal to zero. Because zero is a median of $l_0$ Lemma \ref{lem:median} ensures the existence of $s \in L^\infty(\mu)$ such that $\int_X s \dd\mu = 0$, $\Vert s \Vert_\infty \leq 1$ and $\int_X  l_0\cdot s \dd\mu = \Vert l_0 \Vert_{L^1(\mu)}$. By the qualification of the moment constraint there is $\pi \in \Pi(\mu,\nu)$ such that for $\mu$ almost every $x$ we have $\int_Y g(x,y) \dd\pi_x(y) = \eta s(x)$.
Now take the $\delta$-sliced approximation $\pi^\delta$ of $\pi$. Then we have $\pi^\delta \in \Pi(\mu,\nu)$ and $H(\pi^\delta \mid \mu \otimes \nu) \leq -d\ln(\delta)+d\ln\left(\mathrm{diam}_\infty(Y)+2\right)$ by Lemma \ref{lem:sliced-entropy}. Moreover we have
\begin{align*}
   \int_X  l_0(x)\cdot r(x) \dd\mu(x) &> \int_{X\times Y} l_0(x)\cdot g(x,y) \dd\pi^\delta(x,y) \\
    &\geq \int_{X \times Y} l_0(x)\cdot g(x,y) \dd\pi(x,y)- \omega_g(\sqrt{d}\delta)\Vert l_0 \Vert_{L^1(\mu)} \\
    &\geq \eta \int_X l_0(x)\cdot s(x) \dd\mu(x) - \frac{\eta}{2}\Vert l_0 \Vert_{L^1(\mu)} \\
    &\geq \frac{\eta}{2} \Vert l_0 \Vert_{L^1(\mu)} \geq \int_X  l_0(x)\cdot r(x) \dd\mu(x)
\end{align*}
where the first inequality holds because $l_0$ strictly separates $K$ and $\{r\}$, the second inequality holds because $\mu$ almost surely $W_\infty(\pi_x,\pi_x^\delta) \leq\sqrt{d}\delta$ by Lemma \ref{lem:sliced-entropy}, the third inequality holds because $\int_Y g(x,y)\dd\pi_x(y)$ is equal to  $\eta s(x)$ for $\mu$ almost every $x$, the fourth inequality holds because $\int_X l_0(x)\cdot s(x) \dd\mu(x) = \Vert l_0 \Vert_{L^1(\mu)}$ and the last because  $\Vert r \Vert_\infty \leq \frac{\eta}{2}$. This proves the desired contradiction and ends the proof.

\end{proof}

\section{Duality}\label{sec-duality}

\subsection{Setting}

As explained in the introduction, our goal is to study weak optimal transport problems with both soft (given by the function $f$ and a penalty $\theta$) and hard (given by  the function $g$) moment constraints of the following form:
\begin{equation}
\label{pbwot}
\inf_{\pi \in \pigor} J(\pi):=\int_{X} c(x, \pi_x) \dd \mu(x) + \int_{X} \theta\Big(  \int_Y f(x,y) \dd \pi_x(y) \Big) \dd \mu(x)
\end{equation}
where the set $\pigor$ is the set of transport plans between $\mu$ and $\nu$ which satisfy the moment constraints given by the function $g$:
\[\pigor:=\left\{\pi \in \Pi(\mu, \nu) \;  : \; \int_X g(x,y) \dd \pi_x(y)=0 \mbox{ for $\mu$-a.e. $x\in X$}\right\}\]  
and $\pi_x$ denotes the disintegration of $\pi$ with respect to its first marginal $\mu$ i.e. $\pi=\mu \otimes \pi_x$. 
We  also consider the entropically penalized variant of \eqref{pbwot} with parameter $\eps>0$
\begin{equation}\label{pbwoteps}
\inf_{\pi \in \pigor} J_\eps(\pi):=J(\pi)+ \eps H(\pi \vert \mu \otimes \nu).
\end{equation}
We shall always assume that $X$ and $Y$ are compact metric spaces, that $f\in C(X\times Y, \R^M)$ and $g\in C(X\times Y,\R^N)$ and consider the following assumptions:
\begin{itemize}
\item
\textbf{(H1)} $\theta$ : $\R^M \to \R_+$ is convex (hence continuous and even locally Lipschitz),

\item \textbf{(H2)} the map $(x,p) \in X\times \PP(Y) \mapsto c(x,p) \in \R_+$ is Borel, and for every $x\in X$, $c(x,.)$ is convex lsc  and proper on $\PP(Y)$ (equipped with the weak star topology) and there exists $\widetilde{\pi}=\mu \otimes \widetilde{\pi}_x \in \pigor$ such that $\int_X c(x, \widetilde{\pi}_x) \dd \mu(x) <+\infty$ (so that the value of \eqref{pbwot} is finite). If $\eps>0$ and if we consider \eqref{pbwoteps} for $\eps>0$  we shall strengthen the previous condition by  the requirement that there exists $\widetilde{\pi}\in \pigor$ such that $\int_X c(x, \widetilde{\pi}_x) \dd \mu(x) + \eps H(\widetilde{\pi} \vert \mu \otimes \nu) <+\infty$.

\item \textbf{(H3)} there exists $K$ a convex compact subset of $C(Y)$ such that for every $(x,p) \in X\times \PP(Y)$ one has the dual representation
\begin{equation}\label{dualc}
c(x,p)= \sup_{\gamma \in K} \left\{\int_Y \gamma(y) \dd p(y) -c^*(x,\gamma)\right\}
\end{equation}
where 
\begin{equation}\label{defdecstar}
c^*(x,\gamma):=\sup_{q\in \PP(Y)} \left\{ \int_Y \gamma(y) \dd q(y)-c(x, q)\right\}
\end{equation}
\item \textbf{(H4)} $\int_{X \times Y} g \dd \pi=0$ for every $\pi \in \Pi(\mu, \nu)$. 
\end{itemize}

Before going further, let us comment these assumptions. Condition \textbf{(H3)} seems to be the most demanding one. Let us first observe that defining $c^*$ by \eqref{defdecstar}, it always satisfies: 
\[ \forall (x,\gamma_1, \gamma_2) \in X\times C(Y)\times C(Y), \gamma_1 \leq \gamma_2 \Rightarrow c^*(x, \gamma_1) \leq c^*(x, \gamma_2)\]
and for all $(x, \gamma, \lambda)\in X\times C(Y)\times \R$,
\[c^*(x, \gamma+\lambda)=c^*(x,  \gamma)+ \lambda, \; c^*(x, \gamma) \leq \Vert \gamma \Vert_{\infty}\]
which implies that $c^*(x,.)$ is $1$-Lipschitz with respect to the uniform norm:
\begin{equation}\label{cstarlip1}
\forall (x,\gamma_1, \gamma_2) \in X\times C(Y)\times C(Y), \; \vert c^*(x, \gamma_1)-c^*(x, \gamma_2) \vert \leq \Vert \gamma_1-\gamma_2 \Vert_{\infty}.
\end{equation}

An easy consequence of \textbf{(H2)-(H3)} is the following:

\begin{lemma}\label{cstarisnormal}
If the weak cost $(x,p)\in X \times \PP(y) \mapsto c(x,p)$ satisfies \textbf{(H2)-(H3)}, then $c(x,.)$ is  weakly star continuous on $\PP(Y)$ for every $x\in X$ and $c^*$ is Borel on $X\times K$.
\end{lemma}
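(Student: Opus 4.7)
The plan is to handle the two claims separately, leveraging the compactness of $K$ in the \emph{uniform} topology of $C(Y)$ together with the $1$-Lipschitz property \eqref{cstarlip1}.

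For the weak-star continuity of $c(x,\cdot)$, since \textbf{(H2)} already gives lower semicontinuity, I only need to establish upper semicontinuity at an arbitrary $p\in \PP(Y)$. Fix $x\in X$ and let $p_n\to p$ weak-star in $\PP(Y)$. For each $n$, the functional $\gamma \mapsto \int_Y \gamma\,\dd p_n - c^*(x,\gamma)$ is continuous on $K$ by \eqref{cstarlip1}, so compactness of $K$ ensures that the supremum in \eqref{dualc} is attained at some $\gamma_n\in K$. From any $\limsup$-realizing subsequence, I extract a further subsequence with $\gamma_n\to \gamma\in K$ uniformly; then $\int_Y \gamma_n\,\dd p_n\to \int_Y \gamma\,\dd p$ via the triangle inequality combining uniform convergence of $\gamma_n$ with weak-star convergence of $p_n$, and $c^*(x,\gamma_n)\to c^*(x,\gamma)$ again by \eqref{cstarlip1}. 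Hence, along the subsequence, $c(x,p_n)\to \int_Y \gamma\,\dd p - c^*(x,\gamma)\leq c(x,p)$, yielding $\limsup_n c(x,p_n)\leq c(x,p)$.

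For the Borel measurability of $c^*$ on $X\times K$, the idea is to replace the supremum over $\PP(Y)$ in \eqref{defdecstar} by a countable one. Since $Y$ is compact metric, $\PP(Y)$ is weak-star compact and metrizable, hence separable; pick a countable dense sequence $(q_k)_{k\in \N}$. By the first part, for each fixed $(x,\gamma)$ the map $q\mapsto \int_Y \gamma\,\dd q-c(x,q)$ is weak-star continuous on $\PP(Y)$, so
\[c^*(x,\gamma)=\sup_{k\in \N}\Big\{\int_Y \gamma\,\dd q_k - c(x,q_k)\Big\}.\]
For each fixed $k$, the map $(x,\gamma)\mapsto \int_Y \gamma\,\dd q_k$ depends only on $\gamma$ and is continuous (hence Borel) on $X\times K$, while $(x,\gamma)\mapsto c(x,q_k)$ depends only on $x$ and is Borel as a slice of the jointly Borel map $c$ from \textbf{(H2)}. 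Each summand is therefore Borel on $X\times K$, and the countable supremum of Borel functions is Borel.

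The only real subtlety I anticipate is the attainment of the maximizer $\gamma_n$ and the passage to the limit in the first step: this is exactly where it matters that $K$ is compact in the \emph{uniform} topology (not merely, say, the weak-star one on $C(Y)$ viewed as a dual space), for only then does \eqref{cstarlip1} upgrade the defining sup-functional into a continuous functional on a compact set, and only then does extraction of a uniform limit yield $\int_Y \gamma_n \,\dd p_n\to \int_Y \gamma\,\dd p$ against the weak-star converging $p_n$.
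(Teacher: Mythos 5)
Your argument is correct and takes essentially the same route as the paper: both hinge on compactness of $K$ in the \emph{uniform} topology of $C(Y)$ for the weak-star continuity of $c(x,\cdot)$, and both express $c^*(x,\gamma)$ as a countable supremum over a weak-star dense sequence in $\PP(Y)$ to conclude Borel measurability. The only cosmetic difference is that the paper deduces continuity from the two-sided estimate $\vert c(x,p)-c(x,q)\vert\leq \sup_{\gamma\in K}\vert\int_Y\gamma\,\dd(p-q)\vert$ and compactness of $K$, whereas you split into lower semicontinuity (from \textbf{(H2)}) and upper semicontinuity via maximizer extraction; both reduce to the same compactness step.
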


\begin{proof}
For $x\in X$ and $(p,q)\in \PP(Y)^2$, \eqref{dualc} implies that 
\[ \vert c(x,p)-c(x, q) \vert \leq \sup_{\gamma \in K}  \Big\vert \int_{Y} \gamma \dd (p-q) \Big\vert\]
which together with the compactness of $K$  implies the weak star continuity of $c(x,.)$. Denoting by $(p_n)_n$ a weakly star dense sequence of $\PP(Y)$ we then have for every $(x,\gamma)\in X\times K$, 
\[c^*(x, \gamma)=\sup_n \max_{k=1, \ldots, n} \left(\int_Y \gamma \dd p_k-c(x, p_k)\right)\]
so that $c^*$ is Borel. 

\end{proof}

The fact that \eqref{dualc} holds for a fixed convex compact set $K$  in assumption \textbf{(H3)} is restrictive but is satisfied in the following typical applications:
\begin{itemize}
\item linear cost $c(x, p):=\int_Y \cost(x, y) \dd p(y)$ for some $\cost \in C(X\times Y)$. In this case $c^*(x, \gamma)=-\gamma^\cost(x)$ where $\gamma^\cost$ is the $\cost$-concave transform of $\gamma$ i.e.
\[\gamma^\cost(x)=\min_{y\in Y}\{\cost(x,y)-\gamma(y)\}\]
then $\max_{\gamma \in C(Y)} \{\int_Y \gamma \dd p+ \gamma^\cost(x)\} \leq \int_Y \cost(x,y) \dd p(y)$ and the maximum is attained for $\gamma=\cost(x,.)$ so that \eqref{dualc} holds for $K$ being 
 the convex hull of the set $\{\cost(x,.), \; x\in X\}$, which is compact by uniform continuity of the $\cost$ on $X\times Y$.

\item least transport cost: consider a third compact metric space $Z$ and $\phi$: $X\times Y\times Z\to \R_+$ Borel and such that $\phi(x,.,.)$ is continuous on $Y\times Z$ for every $x$, and has a modulus of continuity  $\omega$ which is uniform in $x$ (this is the case of course if $\phi\in C(X\times Y\times Z)$) and $x\in X\mapsto \sigma^x \in \PP(Z)$ a (Borel) family of probability measures parametrized by $x$. In this setting consider 
\[c(x,p):=\min_{ q\in \Pi(p, \sigma^x)} \int_{ Y\times Z} \phi(x,y, z) \dd q(y, z)\]
then it is easy to see that
\[c^*(x, \gamma)=-\int_{Z} \gamma^{\phi_x}(z) \dd \sigma^x(z), \forall \gamma\in C(Y)\]
where $\gamma^{\phi_x}$ is the $\phi_x$-concave transform of $\gamma$ i.e.
\[\gamma^{\phi_x}(z)=\min_{y\in Y}\{\phi(x,y,z)-\gamma(y)\}\]
and  it is well-known  (by Kantorovich duality and the so-called double concavification trick, see \cite{FSbook}, \cite{Villanibook}) that \eqref{dualc} holds with $K$ the set of continuous function on $Y$ which admit $\omega$ as modulus of continuity and vanish at a given point $y_0\in Y$, this set being compact by the Arzel\`a-Ascoli Theorem; we see that assumption \textbf{(H3)} holds.

\end{itemize}

Note that assumption \textbf{(H4)} holds in the martingale case where $g(x, y)=x-y$ and $\mu \leq_{\mathrm{cvx}} \nu$ so that $\mu$ and $\nu$ share the same barycenter which implies  \textbf{(H4)}. It also holds in case (which contains vector quantile regression as a special case) where $g(x,y)=g(y)$. More precisely, in that case \textbf{(H4)} is implied by \textbf{(H2)} that stipulates that $\pigor$ is nonempty.

\subsection{A Fenchel-Rockafellar argument}

From now on, we assume that assumptions \textbf{(H1)-(H2)-(H3)} are satisfied. As can easily be guessed, problem \eqref{pbwot} naturally arises as the dual of a problem involving: Lagrange multipliers (denoted $\phi$ and $\psi$) for the marginal constraints, a dual variable  (denoted $\lambda$) for the soft moment term involving $\theta$ and $f$, a Lagrange multiplier (denoted $\alpha$) for the hard moment constraint involving $g$ and a dual variable (denoted $\beta$) for the weak OT cost $c$ term. Let us define 
\begin{equation}
E:=C(X)\times C(Y)\times C(X, \R^M) \times C(X, \R^N) \times C(X\times Y)
\end{equation}
and define $\Lambda \in \Lc(E, C(X\times Y))$ by 
\begin{equation}\label{defdeLambda}
\Lambda \xi (x,y):=\beta(x,y)+ \lambda(x) \cdot f(x,y)+\alpha (x) \cdot g(x, y)-\phi(x)-\psi(y), \; \forall (x,y)\in X\times Y,
\end{equation}
where $\xi=(\phi, \psi, \lambda, \alpha, \beta)\in E$ and $\lambda \cdot f$ (respectively $\alpha  \cdot g$) denote usual scalar product in $\R^M$ (respectively $\R^N$). We shall also shorten notations by writing more concisely
\[(\phi \oplus \psi)(x,y):=\phi(x)+\psi(y), (\lambda \odot f)(x,y):= \lambda(x) \cdot f(x,y), \; (\alpha \odot g)(x,y):= \alpha(x) \cdot g(x,y).\]
We will also often use the notation $c_x$ for $c(x,.)$, $c_x^*$ for $c^*(x,.)$ and $\beta_x$ for $\beta(x,.)$.
For $\xi=(\phi, \psi, \lambda, \alpha, \beta)\in E$, define
\begin{equation}\label{defdeF}
F(\xi):=\begin{cases} \int_X [c_x^*(\beta_x)+ \theta^*(\lambda(x))] \dd \mu(x)-\int_X \phi \dd \mu-\int_Y \psi \dd \nu \mbox{ if $\beta_x \in K$ for every $x\in X$}\\ + \infty \mbox{ otherwise}\end{cases}
\end{equation}
where $\theta^*$ is the Legendre transform of $\theta$, $c_x^*$ is defined by \eqref{defdecstar} and  $K$ is the convex compact set from assumption \textbf{(H3)}. Also, for $h\in C(X\times Y)$ set
\begin{equation}\label{defdeG}
G(h)=\begin{cases} 0  \mbox{ if $h \geq 0$ on $X\times Y$} \\ + \infty \mbox{ otherwise} \end{cases}
\end{equation}
and, for $\eps>0$,
\begin{equation}\label{defdeGeps}
G_\eps(h)=\eps \int_{X\times Y} \exp\Big({-\frac{h(x,y) }{\eps}} \Big) \dd \mu (x) \dd \nu(y).
\end{equation}
Now we consider the convex minimization problems
\begin{equation}\label{dualpbwot}
\inf_{\xi \in E} \{F(\xi)+ G(\Lambda \xi)\}
\end{equation}
and for $\eps>0$,
\begin{equation}\label{dualpbwoteps}
\inf_{\xi \in E} \{F(\xi)+ G_\eps(\Lambda \xi)\}.
\end{equation}
Our assumptions \textbf{(H1)-(H2)-(H3)}  imply that problems \eqref{dualpbwot} and \eqref{dualpbwoteps} satisfy the assumptions of the Fenchel-Rockafellar Theorem (see \cite{EkelandTemam}) and since $C(X\times Y)^*\simeq \MM(X\times Y)$ by Riesz's Theorem, we have 
\begin{equation}\label{frpbwot}
\inf_{\xi \in E} \{F(\xi)+ G(\Lambda \xi)\}+ \min_{\pi \in \MM(X\times Y)} \{F^*(\Lambda^* \pi)+ G^*(-\pi)\} =0, 
\end{equation}
and for $\eps>0$
\begin{equation}\label{frpbwoteps}
 \inf_{\xi \in E} \{F(\xi)+ G_\eps(\Lambda \xi)\}+ \min_{\pi \in \MM(X\times Y)} \{F^*(\Lambda^* \pi)+ G_\eps^*(-\pi)\}=0.
\end{equation}
Obviously
\[G^*(-\pi)=\begin{cases} 0 \mbox{ if $\pi \in \MM_+(X\times Y)$} \\ + \infty \mbox{ otherwise}\end{cases}\]
and it is a classical and easy to check fact  (see \cite{Varadhan}) that
\begin{equation}\label{dualentropyformula}
G_\eps^*(-\pi)= \eps \begin{cases} \int_{X\times Y}  (\log(m)-1)m \dd( \mu \otimes \nu) \mbox{ if $\pi= m (\mu \otimes \nu)$ with $m\geq 0$} \\ + \infty \mbox{ otherwise}.\end{cases}
\end{equation}
 Let $\pi \in \MM_+(X\times Y)$, we have $F^*(\La^*\pi) = \sup_{\xi} \int \La\xi\dd\pi - F(\xi)$ so that, by definition of $F$ and $\Lambda$, 
\begin{align}
F^*(\Lambda^* \pi)&= \sup_{(\phi, \psi)\in C(X)\times C(Y)} \left\{-\int_{X\times Y} (\phi \oplus \psi) \dd \pi +\int_X \phi \dd \mu+ \int_Y \psi \dd \nu \right\}  \label{t1Fstar} \\
&+ \sup_{\alpha\in C(X, \R^N)} \int_{X\times Y} \alpha \odot g \dd \pi \label{t2Fstar}\\
&+ \sup_{\beta \in C(X,K)}   \left\{\int_{X\times Y} \beta \dd \pi-\int_X c_x^*(\beta_x) \dd \mu(x) \right\} \label{t3Fstar}\\
&+ \sup_{\lambda \in C(X, \R^M)}\left\{ \int_{X\times Y} \lambda \odot f \dd \pi -\int_{X} \theta^*(\lambda(x)) \dd \mu(x)  \right\}  \label{t4Fstar} 
\end{align}
where slightly abusing notations, by $C(X, K)$ we mean the set of functions $\beta\in C(X\times Y)$ such that $\beta_x \in K$ for every $x\in X$.
Observe that the sum of the right hand-side of \eqref{t1Fstar} and the expression in \eqref{t2Fstar} is $0$ if $\pi\in \pigor$ and $+\infty$ otherwise. We are  thus left to compute the expressions in \eqref{t3Fstar} and \eqref{t4Fstar} for $\pi=\mu \otimes \pi_x \in \Pi(\mu, \nu)$, which is the object of the next two Lemmas whose proof is postponed to the appendix:

\begin{lemma}\label{dualthetarep}
Assume \textbf{(H1)} and let  $\pi=\mu \otimes \pi_x \in \Pi(\mu, \nu)$, then there holds 
\begin{align}
\int_X  \theta\Big(  \int_Y f(x,y) \dd \pi_x(y) \Big) \dd \mu(x)= \sup_{\lambda \in C(X, \R^M)}\left\{ \int_{X\times Y} \lambda \odot f \dd \pi -\int_{X} \theta^*(\lambda(x)) \dd \mu(x)  \right\} \\
 =\sup_{\lambda \in C(X, \R^M), \Vert \lambda \Vert_{\infty }\leq C(\theta, f)}\left\{ \int_{X\times Y} \lambda \odot f \dd \pi -\int_{X} \theta^*(\lambda(x)) \dd \mu(x)  \right\} 
\end{align}
where in the last line $C(\theta, f)$ denotes the Lipschitz constant of $\theta$ on the ball of $\R^M$ centered at $0$ with radius $\Vert f \Vert_{\infty}+1$. 

\end{lemma}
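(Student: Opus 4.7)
The two suprema in the statement are nested: the one with the norm bound $\|\lambda\|_\infty\le C(\theta,f)$ is no larger than the unconstrained one over $C(X,\R^M)$. Setting $u(x):=\int_Y f(x,y)\,d\pi_x(y)$, which is Borel with $|u(x)|\le \|f\|_\infty$, and noting $\int\lambda\odot f\,d\pi=\int\lambda(x)\cdot u(x)\,d\mu(x)$ by disintegration, the pointwise Fenchel inequality $\theta(u(x))\ge\lambda(x)\cdot u(x)-\theta^*(\lambda(x))$ integrated against $\mu$ gives
\[
\sup_{\substack{\lambda\in C(X,\R^M)\\\|\lambda\|_\infty\le C(\theta,f)}}\!\Bigl\{\int\lambda\odot f\,d\pi-\int\theta^*(\lambda)d\mu\Bigr\}\le\sup_{\lambda\in C(X,\R^M)}\Bigl\{\cdots\Bigr\}\le\int_X\theta(u)\,d\mu.
\]
So it only remains to show that the leftmost quantity reaches $\int_X\theta(u)\,d\mu$.

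I would extract a bounded Borel pointwise maximizer via measurable selection. Since $\theta$ is finite convex on $\R^M$, the subdifferential $\partial\theta(u(x))$ is nonempty, closed, convex; the $L$-Lipschitz property of $\theta$ on $\overline{B}(0,\|f\|_\infty+1)$, with $L:=C(\theta,f)$, forces $\partial\theta(u(x))\subset\overline{B}(0,L)$. The Borel measurability of $u$ combined with Kuratowski--Ryll-Nardzewski yields a Borel $\bar\lambda\colon X\to\overline{B}(0,L)$ with $\bar\lambda(x)\in\partial\theta(u(x))$ $\mu$-a.e.; Fenchel equality then gives $\theta(u)=\bar\lambda\cdot u-\theta^*(\bar\lambda)$ a.e., and in particular $\theta^*(\bar\lambda)\le L\|f\|_\infty$. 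A naive Lusin--Tietze approximation of $\bar\lambda$ is dangerous because the extension might land in $\{\theta^*=+\infty\}$, so I first damp: since $\theta\ge 0$ gives $\theta^*(0)=-\inf\theta\in[-\theta(0),0]$, for $\eta\in(0,1)$ the vector $\bar\lambda_\eta:=(1-\eta)\bar\lambda$ satisfies, by convexity of $\theta^*$,
\[
\theta^*(\bar\lambda_\eta(x))\le (1-\eta)\theta^*(\bar\lambda(x))+\eta\theta^*(0)\le M:=L\|f\|_\infty+|\theta^*(0)|,
\]
so its image lies in the convex compact set $K_\eta:=\{v\in\R^M:|v|\le(1-\eta)L,\ \theta^*(v)\le M\}$ (closed by lower semicontinuity of $\theta^*$), on which $\theta^*$ is bounded.

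Now I apply Lusin: there exist closed $A_n\subset X$ with $\mu(X\setminus A_n)<2^{-n}$ on which $\bar\lambda_\eta$ is continuous. Extending by Tietze componentwise and composing with the continuous projection onto $K_\eta$ (equivalently, Dugundji's theorem) produces a continuous $\lambda_{n,\eta}\colon X\to K_\eta$ coinciding with $\bar\lambda_\eta$ on $A_n$, with $\|\lambda_{n,\eta}\|_\infty\le(1-\eta)L<L$. Borel--Cantelli gives $\lambda_{n,\eta}(x)=\bar\lambda_\eta(x)$ for all large $n$, $\mu$-a.e.\ $x$, and since $|\lambda_{n,\eta}\cdot u|\le L\|f\|_\infty$ and $\theta^*(\lambda_{n,\eta})\le M$ uniformly, dominated convergence yields
\[
\int_X\!\bigl(\lambda_{n,\eta}\cdot u-\theta^*(\lambda_{n,\eta})\bigr)d\mu\xrightarrow[n\to\infty]{}\int_X\!\bigl(\bar\lambda_\eta\cdot u-\theta^*(\bar\lambda_\eta)\bigr)d\mu.
\]
Letting $\eta\to 0^+$, the convexity upper bound and lower semicontinuity of $\theta^*$ at $\bar\lambda(x)$ squeeze $\theta^*(\bar\lambda_\eta(x))\to\theta^*(\bar\lambda(x))$ pointwise under the uniform bound above, and dominated convergence again produces the limit $\int(\bar\lambda\cdot u-\theta^*(\bar\lambda))d\mu=\int\theta(u)d\mu$. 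This gives the required inequality and hence both equalities of the lemma.

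The main obstacle is precisely this potential blow-up of $\theta^*\circ\lambda_{n,\eta}$ after a continuous extension: the Tietze step cannot see the possibly wild effective domain of $\theta^*$. The $(1-\eta)$-damping, which uses only the finiteness of $\theta^*(0)$ coming from $\theta\ge 0$, is the crucial device that confines the approximating functions to the compact sublevel set $K_\eta$ on which $\theta^*$ is controlled; once this is done, the two nested dominated-convergence passages are routine.
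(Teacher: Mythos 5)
Your proof is correct, and it takes a genuinely different route from the paper's. The paper regularizes \emph{the argument of} $\theta$: it first applies Lusin's theorem to $h(x)=\int f(x,y)\,d\pi_x(y)$ to obtain a continuous $h_\eps$, then replaces $\theta$ by its Moreau--Yosida approximation $\theta_\delta$, and uses the continuous composition $\lambda_{\eps,\delta}=\nabla\theta_\delta\circ h_\eps$ as its dual competitor; the $L^\infty$ bound follows from the proximal-point identity $\nabla\theta_\delta(h_\eps(x))\in\partial\theta(v_\delta(h_\eps(x)))$ with $v_\delta(h_\eps(x))\in B(0,\|f\|_\infty+1)$ for small $\delta$, and the worry about $\theta^*$ exploding never arises because the proof only uses $\theta_\delta^*\ge\theta^*$. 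You instead work directly with a measurable subgradient selector $\bar\lambda(x)\in\partial\theta(u(x))$ (via Kuratowski--Ryll-Nardzewski), regularize \emph{the selector itself} by Lusin, and control $\theta^*$ after the Tietze/Dugundji extension by the $(1-\eta)$-damping confined to the compact convex sublevel set $K_\eta$. Both arguments are sound; yours needs measurable selection theory and an explicit device (the damping, exploiting $\theta\ge 0$ so $\theta^*(0)>-\infty$) to tame $\theta^*$ outside the range of the selector, while the paper's Moreau--Yosida construction is more elementary (no selection theorem) and sidesteps the $\theta^*$ blow-up issue by construction. One small remark in your favour: after Borel--Cantelli the sequence $\lambda_{n,\eta}(x)$ is \emph{eventually equal} to $\bar\lambda_\eta(x)$ for a.e.\ $x$, so you never actually need any continuity property of $\theta^*$ along that limit—only the lower semicontinuity is used in the final $\eta\to 0$ passage, exactly as you say.
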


\begin{lemma}\label{dualjeanpierre}
Assume \textbf{(H2)-(H3)} and let  $\pi=\mu \otimes \pi_x \in \Pi(\mu, \nu)$, then there holds 
\begin{equation}
\int_{X} c(x, \pi_x) \dd \mu(x)= \sup_{\beta \in C(X,K)} \left\{\int_{X\times Y} \beta \dd \pi-\int_X c_x^*(\beta_x) \dd \mu(x) \right\}.
\end{equation}
\end{lemma}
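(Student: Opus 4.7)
The inequality $\geq$ is immediate: for any $\beta \in C(X, K)$, assumption \textbf{(H3)} applied pointwise with $p = \pi_x$ gives $c(x, \pi_x) \geq \int_Y \beta(x,y) \dd \pi_x(y) - c^*_x(\beta_x)$, and integrating against $\mu$ yields the desired bound. The bulk of the work is the reverse inequality: the plan is to construct, for each $\eta > 0$, a candidate $\beta^\eta \in C(X, K)$ that makes the right-hand side within $O(\eta)$ of $\int_X c(x, \pi_x) \dd \mu(x)$.

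The first step is to produce a Borel pointwise optimizer $x \mapsto \bar\gamma_x \in K$. For each fixed $x$, the map $\gamma \mapsto \int_Y \gamma \dd \pi_x - c^*(x, \gamma)$ is continuous on the compact set $K$ (by the $1$-Lipschitz estimate \eqref{cstarlip1} and weak-$*$ continuity of integration against $\pi_x$), so the supremum in \eqref{dualc} is attained. The associated argmax multifunction $x \mapsto \Gamma(x) \subset K$ has nonempty closed values and a Borel graph: $(x, \gamma) \mapsto \int_Y \gamma \dd \pi_x$ is Carath\'eodory (continuous in $\gamma$, Borel in $x$ by disintegration), $c^*$ is Borel on $X \times K$ by Lemma \ref{cstarisnormal}, and $x \mapsto c(x, \pi_x)$ is Borel by composition. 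The Kuratowski-Ryll-Nardzewski selection theorem then yields a Borel selection $x\mapsto \bar{\gamma}_x$ realizing the maximum.

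The second step is to regularize this selection into a continuous one. Lusin's theorem applied to $\bar\gamma \colon X \to K$, viewed as a Borel map into the compact metrizable space $K$, produces for every $\eta > 0$ a closed set $X_\eta \subset X$ with $\mu(X \setminus X_\eta) < \eta$ on which $\bar\gamma$ is continuous. Since $K$ is a convex compact subset of the normed space $C(Y)$, Dugundji's extension theorem yields a continuous extension $\tilde\gamma^\eta \colon X \to K$, and $\beta^\eta(x, y) := \tilde\gamma^\eta(x)(y)$ then belongs to $C(X, K)$ and coincides with $\bar\gamma$ on $X_\eta$. In the concluding step, \textbf{(H3)} together with \eqref{cstarlip1} show that the nonnegative integrand $c(x, \pi_x) - \bigl(\int_Y \beta^\eta_x \dd \pi_x - c^*_x(\beta^\eta_x)\bigr)$ is dominated by a $\mu$-integrable function of the form $2M + 2c(x, \pi_x)$ with $M := \sup_{\gamma \in K} \Vert \gamma \Vert_\infty < \infty$, and vanishes on $X_\eta$, so its integral over $X \setminus X_\eta$ tends to $0$ as $\eta \to 0$ by absolute continuity of the integral.

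The main obstacle is the regularization step: a na\"ive Lusin approximation of $\bar\gamma$ need not stay in $K$, and this is precisely what Dugundji's vector-valued extension theorem secures via the convexity of $K$. The degenerate case $\int_X c(x, \pi_x) \dd \mu = +\infty$ would be handled by a parallel truncation argument, selecting $\bar\gamma^n_x \in K$ with $\int_Y \bar\gamma^n_x \dd \pi_x - c^*_x(\bar\gamma^n_x) \geq \min(c(x, \pi_x), n)$ at Step 1 and applying monotone convergence as $n \to \infty$.
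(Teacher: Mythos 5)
Your proof is correct, but it follows a genuinely different route from the paper's. The paper avoids infinite-dimensional selection and extension theorems by first discretizing $K$: for $\delta>0$ it covers $K$ by finitely many $\delta$-balls with centers $\gamma_1,\dots,\gamma_{N_\delta}$, replaces the sup over $K$ with the sup over the simplex $\Delta_\delta$ of convex combinations (paying only an extra $2\delta$ via the Lipschitz estimate \eqref{cstarlip1}), invokes the Ekeland--Temam measurable selection theorem for the resulting \emph{finite-dimensional} maximization, and finally applies Lusin's theorem to the $\R^{N_\delta}$-valued weight map $\eta^\delta(x)$, projecting the continuous approximant back onto $\Delta_\delta$ so that the reconstructed $\beta^{\delta,\eps}$ stays in $C(X,K_\delta)\subset C(X,K)$. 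You instead keep everything in the infinite-dimensional compact set $K$: Kuratowski--Ryll-Nardzewski (or, equivalently, the measurable-maximizer theorem for Carath\'eodory integrands) gives a Borel selection $x\mapsto\bar\gamma_x\in K$ directly, Lusin restricts it to a continuous map on a large compact $X_\eta$, and Dugundji's extension theorem --- which crucially keeps the extension in $\mathrm{co}(\bar\gamma(X_\eta))\subset K$ --- replaces the paper's simplex-projection trick. The tradeoff is transparent: the paper's finite-dimensional reduction is more elementary and avoids Dugundji, at the cost of an extra approximation parameter $\delta$ and the $2\delta$ error term; your version is slicker and has a single parameter, but leans on two less routine facts (KRN for Borel selections with values in a compact metric space, and Dugundji for convex-valued extensions).

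Two small remarks. First, the domination is slightly loose: the correct pointwise bound is $c(x,\pi_x) - \bigl(\int_Y\beta^\eta_x\dd\pi_x - c^*_x(\beta^\eta_x)\bigr)\le c(x,\pi_x) + 2M$ (not $2c(x,\pi_x)+2M$); both are integrable so the conclusion stands. Second, the ``degenerate case'' $\int_X c(x,\pi_x)\dd\mu=+\infty$ you guard against at the end never actually occurs under \textbf{(H2)}--\textbf{(H3)}: from \eqref{dualc}, for any $p,q$ one has $|c(x,p)-c(x,q)|\le\sup_{\gamma\in K}\bigl|\int_Y\gamma\,\dd(p-q)\bigr|\le 2M$, so $\int_X c(x,\pi_x)\dd\mu\le \int_X c(x,\widetilde\pi_x)\dd\mu + 2M<\infty$ with $\widetilde\pi$ the competitor from \textbf{(H2)}; your truncation fallback is therefore superfluous.
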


It therefore follows from Lemma \ref{dualthetarep} and Lemma  \ref{dualjeanpierre}  that for all $\pi \in \MM_+(X\times Y)$ we have
\begin{equation}\label{abamacron}
F^*(\Lambda^* \pi)=  \begin{cases} J(\pi) \mbox{ if $\pi \in \pigor$} \\ + \infty \mbox{ otherwise } \end{cases}.
\end{equation}
Using  \eqref{dualentropyformula},  for every $\pi \in \Pi(\mu, \nu)$ we have
 \begin{equation}\label{ababayrou}
 G_\eps^*(-\pi)= \eps H(\pi \vert \mu\otimes \nu)-\eps.
 \end{equation}
 We thus have:
\begin{theorem}\label{dualityformulasforwot}
Assume \textbf{(H1)-(H2)-(H3)}, then we have
\begin{equation}\label{equalpdvalues}
\min_{\pi\in \pigor} J(\pi)=\sup_{\xi \in E \; : \;  \Lambda \xi \geq 0} \{-F(\xi)\}
\end{equation}
and for $\eps>0$
\begin{equation}\label{equalpdvalueseps}
\min_{\pi\in \pigor} J_\eps(\pi)=\sup_{\xi \in E} \{-F(\xi)-\eps \int_{X\times Y} e^{-\frac{\Lambda \xi}{\eps}} \dd (\mu \otimes \nu)\}+\eps
\end{equation}
In particular this implies that both \eqref{pbwot} and \eqref{pbwoteps} admits solutions. Moreover adding the additional constraint $\Vert \lambda \Vert_{\infty} \leq C(\theta, f)$ where $C(\theta, f)$ is as in Lemma \ref{dualthetarep} in the minimization problems \eqref{dualpbwot} and \eqref{dualpbwoteps} does not affect their values.  If, in addition, \textbf{(H4)} holds one can further add the constraint
\begin{equation}\label{alphazeromean}
\int_{X} \alpha(x) \dd \mu(x)=0
\end{equation}
in the minimization problems \eqref{dualpbwot} and \eqref{dualpbwoteps} without affecting their values. 

\end{theorem}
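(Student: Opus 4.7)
The plan is to assemble the pieces that the section has already set up. The duality identities \eqref{frpbwot} and \eqref{frpbwoteps} follow from the Fenchel-Rockafellar theorem applied to the pairs $(F, G)$ and $(F, G_\eps)$ composed with the continuous linear map $\Lambda$; under \textbf{(H1)--(H3)} the functional $F$ is convex, lsc and proper (properness via the feasible $\widetilde\pi$ of \textbf{(H2)}), while the qualification condition is automatic for $G_\eps$ (finite and continuous on all of $C(X\times Y)$) and for $G$ follows from the existence of some $\xi_0\in \mathrm{dom}(F)$ with $\Lambda\xi_0$ in the strictly positive cone (e.g.\ take any $\beta\equiv \gamma_0$ with $\gamma_0\in K$, $\lambda=0$, $\alpha=0$ and $\phi,\psi$ sufficiently negative constants).

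The core computation is to identify $F^*\circ \Lambda^*$ on $\MM_+(X\times Y)$. By the separable structure of $F$ in $(\phi,\psi,\lambda,\alpha,\beta)$, the supremum defining $F^*(\Lambda^*\pi)$ splits into the four independent terms \eqref{t1Fstar}--\eqref{t4Fstar}. The first (classical Kantorovich duality) equals $0$ if $\pi \in \Pi(\mu,\nu)$ and $+\infty$ otherwise; the second equals $0$ if $\pi\in \pigor$ and $+\infty$ otherwise; the third equals $\int_X c(x,\pi_x)\dd\mu$ by Lemma \ref{dualjeanpierre}; the fourth equals $\int_X \theta(\int_Y f\dd\pi_x)\dd\mu$ by Lemma \ref{dualthetarep}. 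This yields \eqref{abamacron}. Combined with the standard computations of $G^*$ (indicator of $-\MM_+(X\times Y)$) and $G_\eps^*$ given by \eqref{dualentropyformula}, substitution into \eqref{frpbwot} and \eqref{frpbwoteps} delivers \eqref{equalpdvalues} and \eqref{equalpdvalueseps} directly. Primal attainment for \eqref{pbwot} and \eqref{pbwoteps} is built into the fact that Fenchel-Rockafellar yields $\min$ (not merely $\inf$) on the $\pi$-side of both identities. The a priori bound $\|\lambda\|_\infty\leq C(\theta,f)$ in the dual is precisely the second line of Lemma \ref{dualthetarep}.

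The main obstacle is the last statement: under \textbf{(H4)}, the mean-zero constraint $\int_X \alpha\dd\mu = 0$ can be imposed without changing the dual value. The strategy is to rerun the Fenchel-Rockafellar argument on the closed subspace $E_0 := \{\xi\in E :\int_X \alpha\dd\mu = 0\}$, which amounts to replacing the $\alpha$-supremum \eqref{t2Fstar} in $F^*\circ\Lambda^*\pi$ by its restriction to mean-zero $\alpha$. Setting $h(x):= \int_Y g(x,y)\dd\pi_x(y)$, the unrestricted supremum $\sup_\alpha \int_X \alpha\cdot h\dd\mu$ equals $0$ if $h=0$ $\mu$-a.e.\ and $+\infty$ otherwise; the restricted supremum over $\{\alpha:\int\alpha\dd\mu = 0\}$ equals $0$ if $h$ is $\mu$-a.e.\ a constant vector of $\R^N$ and $+\infty$ otherwise. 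But \textbf{(H4)} forces $\int_X h\dd\mu = \int g\dd\pi = 0$ for every $\pi\in \Pi(\mu,\nu)$, so ``$h$ is $\mu$-a.e.\ constant'' is equivalent to ``$h=0$ $\mu$-a.e.''. Hence the two suprema coincide pointwise in $\pi$, identification \eqref{abamacron} persists when $F^*$ is computed over $E_0$, and the Fenchel-Rockafellar duality on this closed subspace (on which $\Lambda$ is still continuous and the qualification unaffected) produces the claimed equality of values for both \eqref{dualpbwot} and \eqref{dualpbwoteps}.
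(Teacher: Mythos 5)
Your proposal is correct and follows essentially the same route as the paper: identify $F^*\circ\Lambda^*$ on $\MM_+(X\times Y)$ via Lemmas \ref{dualthetarep} and \ref{dualjeanpierre}, feed this into the Fenchel--Rockafellar identities \eqref{frpbwot}--\eqref{frpbwoteps}, and observe that under \textbf{(H4)} restricting $\alpha$ to mean zero does not change the supremum in \eqref{t2Fstar} when $\pi\in\Pi(\mu,\nu)$. You merely spell out some details the paper leaves implicit (verifying the Fenchel--Rockafellar qualification, and phrasing the last point as a Fenchel--Rockafellar argument on the closed subspace $E_0$).
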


\begin{proof}
Formulas \eqref{equalpdvalues} and \eqref{equalpdvalueseps} directly follow from \eqref{abamacron}, \eqref{ababayrou}, \eqref{frpbwot} and \eqref{frpbwoteps}.The fact that the constraint $\Vert \lambda \Vert_{\infty} \leq C(\theta, f)$ does not change the values of \eqref{dualpbwot} and \eqref{dualpbwoteps} directly follows from Lemma \ref{dualthetarep}. Finally, if \textbf{(H4)} holds, adding the constraint \eqref{alphazeromean} does not affect the value of the supremum in \eqref{t2Fstar} when $\pi\in \Pi(\mu, \nu)$ (which is automatic when the right hand side of \eqref{t1Fstar} is finite) which implies that adding constraint \eqref{alphazeromean} does not change the values of   \eqref{dualpbwot} and \eqref{dualpbwoteps}. 

\end{proof}

\subsection{Relaxation}

There is a priori no chance that the functionals in \eqref{dualpbwot} and \eqref{dualpbwoteps} admit minimizers among continuous functions of both variables, in particular in the variable $x$, since for instance $\alpha(x)$ plays a role of Lagrange multiplier for the hard moment constraint given by $g(x,.)$ and regularity of Lagrange multipliers with respect to a parameter is false in general. This suggests to enlarge the admissible dual set to functions which are typically continuous in $y$ but only measurable (and essentially bounded) in $x$. More precisely define 
\[E_{\Relax}:=L^{\infty}((X, \mu))\times C(Y)\times L^{\infty}((X, \mu), \R^M)\times L^{\infty}((X, \mu), \R^N)\times L^{\infty}((X,\mu), C(Y)).\]
For $\xi=(\phi, \psi, \lambda, \alpha, \beta)\in E_\Relax$ we can define $\Lambda \xi$ exactly as before i.e. 
\[\Lambda \xi :=\beta +  \lambda \odot f +\alpha \odot g -(\phi\oplus \psi) \in L^{\infty}((X, \mu), C(Y))\]
and $F(\xi)$ for $\xi\in E_{\Relax}$ by
\begin{equation}\label{defdeF}
F(\xi):=\begin{cases} \int_X [c_x^*(\beta_x)+ \theta^*(\lambda(x))] \dd \mu(x)-\int_X \phi \dd \mu-\int_Y \psi \dd \nu \mbox{ if $\beta_x \in K$ for $\mu$-every $x\in X$}\\ + \infty \mbox{ otherwise.}\end{cases}
\end{equation}
Setting for all $h \in  L^{\infty}((X, \mu), C(Y))$
\[G(h):=\begin{cases} 0 \mbox{ if $h \geq 0$} \\ + \infty \mbox{ otherwise} \end{cases} \mbox{ and } G_\eps(h):=\eps \int_{X\times Y} e^{-\frac{h}{\eps}} \dd (\mu \otimes \nu),\]
let us consider the relaxed versions of \eqref{dualpbwot} and \eqref{dualpbwoteps} for $\eps>0$
\begin{equation}\label{dualpbwotrel}
\inf_{\xi \in E_{\Relax}} \{F(\xi)+ G(\Lambda \xi)\}
\end{equation}
and
\begin{equation}\label{dualpbwotreleps}
\inf_{\xi \in E_{\Relax}} \{F(\xi)+ G_\eps(\Lambda \xi)\}.
\end{equation}
Since  $E \subset  E_{\Relax}$ we obviously have $\inf \eqref{dualpbwot} \geq \inf \eqref{dualpbwotrel}$ and $\inf \eqref{dualpbwoteps} \geq \inf \eqref{dualpbwotreleps}$. To prove the reverse inequality, we shall argue by a weak duality argument and conclude with Theorem \ref{dualityformulasforwot}. Let $\pi=\mu \otimes \pi_x\in \pigor$ and let $\xi\in E_{\relax}$, integrating Young's inequalities
\[c_x^*(\beta_x) +c_x(\pi_x) \geq \int_Y \beta_x \dd \pi_x, \; \mbox{ and } \theta^*(\lambda(x))+ \theta\Big(\int_Y f(x,y) \dd\pi_x(y)\Big) \geq \lambda (x) \cdot \int_Y f(x,y) \dd \pi_x(y)\]

and using the fact that $\Lambda \xi \in L^1(\pi)$ (in fact we even have $\Lambda \xi \in L^{\infty}(\pi)$) and $\int_{X\times Y} (\alpha \odot g)  \dd \pi=0$ and $\int_X \phi \dd \mu+ \int_Y \psi \dd \nu=\int_{X\times Y} (\phi\oplus \psi) \dd \pi$ because $\pi\in \pigor$, we arrive at 
\begin{align}
F(\xi) &\geq \int_{X\times Y} \beta \dd \pi-\int_{X} c_x(\pi_x) \dd \mu(x)  + \int_{X\times Y} (\lambda \odot f) \dd \pi-\int_{X}   \theta\Big(\int_Y f(x,y) \dd\pi_x(y)\Big) \dd\mu(x) \nonumber\\
&-\int_{X\times Y} (\phi\oplus \psi) \dd \pi \nonumber \\
& = \int_{X\times Y} \Lambda \xi \dd\pi -J(\pi). \label{dualineq}
\end{align}
So if $\Lambda \xi \geq 0$, since $\pi \geq 0$, we have $F(\xi)\geq -J(\pi)$
hence the weak duality inequality
\[\inf_{\xi \in E_{\Relax}} \{F(\xi)+ G(\Lambda \xi)\} \geq  \sup_{\pi \in \pigor} -J(\pi).\]
For $\eps>0$, we can use  Young's inequality 
\[G_\eps(\Lambda \xi)+ G_\eps^*(-\pi) \geq -\int_{X\times Y} \Lambda \xi \dd \pi\]
and \eqref{ababayrou} to obtain 
\[\inf_{\xi \in E_{\Relax}} \{F(\xi)+ G_\eps(\Lambda \xi)\} \geq \eps + \sup_{\pi \in \pigor} -J_\eps(\pi).\]

As a direct consequence of Theorem \ref{dualityformulasforwot}, we thus deduce

\begin{corollary}\label{relaxcoro}
Assume \textbf{(H1)-(H2)-(H3)}, then we have
\begin{equation}\label{equalpdvaluesrel}
\min_{\pi\in \pigor} J(\pi)=\sup_{\xi \in E \; : \;  \Lambda \xi \geq 0} \{-F(\xi)\}= \sup_{\xi \in E_{\Relax} \; : \;  \Lambda \xi \geq 0} \{-F(\xi)\}
\end{equation}
and for $\eps>0$
\begin{equation}\label{equalpdvaluesepsrel}
\min_{\pi\in \pigor} J_\eps(\pi)=\sup_{\xi \in E} \{-F(\xi)-G_\eps(\Lambda \xi)\}+\eps=\sup_{\xi \in E_{\Relax}} \{-F(\xi)-G_\eps(\Lambda \xi)\}+\eps.
\end{equation}

\end{corollary}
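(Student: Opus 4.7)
The plan is a straightforward squeeze argument: I would sandwich each relaxed dual value between the original (continuous) dual value, which is known to equal the primal optimum by Theorem \ref{dualityformulasforwot}, and a weak duality upper bound that I would read off from the inequalities already established for arbitrary $\xi\in E_{\Relax}$ in the paragraph preceding the statement.

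First, since $E \subset E_{\Relax}$, the trivial inclusion gives
$$\sup_{\xi \in E,\ \Lambda\xi \geq 0}\{-F(\xi)\} \leq \sup_{\xi \in E_{\Relax},\ \Lambda\xi \geq 0}\{-F(\xi)\},$$
and in the same way
$$\sup_{\xi \in E}\{-F(\xi)-G_\eps(\Lambda\xi)\} \leq \sup_{\xi \in E_{\Relax}}\{-F(\xi)-G_\eps(\Lambda\xi)\},$$
which handles one direction in each of \eqref{equalpdvaluesrel} and \eqref{equalpdvaluesepsrel}.

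Second, I would invoke the weak duality inequalities already written before the statement. For any $\pi=\mu\otimes\pi_x\in\pigor$ and any $\xi\in E_{\Relax}$, integrating the pointwise Young inequalities for $c_x^*,c_x$ and $\theta^*,\theta$ against $\pi$, and using $\int \alpha\odot g\,\dd\pi=0$ together with the marginal identity $\int\phi\dd\mu+\int\psi\dd\nu=\int(\phi\oplus\psi)\dd\pi$, produces precisely \eqref{dualineq}: $F(\xi) \geq \int \Lambda\xi\,\dd\pi - J(\pi)$. When $\Lambda\xi \geq 0$ this gives $-F(\xi)\leq J(\pi)$, and taking the infimum over $\pi$ yields
$$\sup_{\xi \in E_{\Relax},\ \Lambda\xi \geq 0}\{-F(\xi)\} \leq \min_{\pi \in \pigor} J(\pi).$$
For the entropic case, combining \eqref{dualineq} with the pointwise Young inequality $G_\eps(\Lambda\xi)+G_\eps^*(-\pi)\geq -\int\Lambda\xi\,\dd\pi$ and the identity \eqref{ababayrou} for $G_\eps^*$ analogously gives
$$\sup_{\xi \in E_{\Relax}}\{-F(\xi)-G_\eps(\Lambda\xi)\}+\eps \leq \min_{\pi \in \pigor} J_\eps(\pi).$$

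Finally, Theorem \ref{dualityformulasforwot} identifies $\min_{\pi\in\pigor} J(\pi)$ with $\sup_{\xi\in E,\ \Lambda\xi\geq 0}\{-F(\xi)\}$, and $\min_{\pi\in\pigor} J_\eps(\pi)$ with $\sup_{\xi\in E}\{-F(\xi)-G_\eps(\Lambda\xi)\}+\eps$. Chaining these identifications with the two previous steps closes the loop and forces every inequality to be an equality. The only subtle point to verify is that \eqref{dualineq} really does make sense on $E_{\Relax}$, i.e.\ that $\Lambda\xi\in L^\infty(\pi)$ and that the various integrals are well defined for $\xi$ only measurable and essentially bounded in $x$; this is immediate because $\beta\in L^\infty((X,\mu),C(Y))$, $\lambda$ and $\alpha$ are in $L^\infty(\mu)$, $f,g$ are continuous and bounded, and $\phi\in L^\infty(\mu)$, $\psi\in C(Y)$. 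There is thus no real obstacle here: the corollary is essentially a bookkeeping observation that the Fenchel--Rockafellar derivation preceding the statement was already written at the level of $E_{\Relax}$.
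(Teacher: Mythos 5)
Your proposal is correct and takes essentially the same approach as the paper: the trivial inclusion $E\subset E_{\Relax}$, the weak duality inequality \eqref{dualineq} (and its entropic variant via \eqref{ababayrou}) established in the paragraph before the statement, and the identification of the primal value with the $E$-supremum from Theorem \ref{dualityformulasforwot}, chained together to squeeze the relaxed supremum. The remark on well-definedness of $\Lambda\xi$ over $E_{\Relax}$ is a nice explicit check that the paper leaves implicit.
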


Of course we can again impose the extra constraint $\Vert \lambda \Vert_{\infty} \leq C(\theta, f)$ and (if \textbf{(H4)} is satisfied) that $\alpha$ has zero mean in the relaxed duals without changing their values.

\section{Dual attainment}\label{sec-attain}

Key to the existence of solutions to \eqref{dualpbwotrel} and \eqref{dualpbwotreleps} is the following Slater qualification condition whose importance (especially in the martingale constraint case) has been discussed in section \ref{sec-qual}. Let us consider:
\begin{itemize}
    \item \textbf{(H5)} There exists $\eta>0$ such that for every $r\in L^{\infty}((X,\mu), \R^N)$ such that
\[ \int_X r (x)\dd \mu(x)=0 \mbox{ and } \Vert r \Vert_{L^{\infty}} \leq \eta\]
there exists $\pi=\mu \otimes \pi_x$ such that
\[ \int_Y g(x,y) \dd \pi_x(y)=r(x), \; \mbox{ for $\mu$ a.e. $x\in X$}.\]
\end{itemize}
Note that $\textbf{(H4)-(H5)}$ is exactly the qualification of the moment constraint $g$ from Definition \ref{def:slater}.

\subsection{The unregularized case}

Our first main result is the existence of relaxed dual solutions:

\begin{theorem}\label{existoptpot}
If \textbf{(H1)-(H2)-(H3)-(H4)-(H5)} hold then  the relaxed dual problem \eqref{dualpbwotrel} admits at least one solution.

\end{theorem}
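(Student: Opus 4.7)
The plan is the direct method applied to \eqref{dualpbwotrel}. Fix a minimizing sequence $(\xi_n)=(\phi_n,\psi_n,\lambda_n,\alpha_n,\beta_n)\in E_{\Relax}$ satisfying $\Lambda\xi_n\geq 0$; by Corollary~\ref{relaxcoro} we may impose $\Vert\lambda_n\Vert_\infty\leq C(\theta,f)$ and $\int_X\alpha_n\,\dd\mu=0$. The aim is to derive uniform a priori bounds on each component in a topology that affords compactness, and then exploit the lower semicontinuity and convexity of $F$ to pass to the limit.

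The admissibility requirement $\beta_{n,x}\in K$, with $K$ compact in $C(Y)$, provides a uniform sup-norm bound on $\beta_n$ together with a common $y$-modulus of continuity $\omega_K$; by \eqref{cstarlip1} this bounds $c^{\ast}_x(\beta_{n,x})$ uniformly, and $\theta^\ast(\lambda_n)$ is bounded since $\lambda_n$ is. As $F(\xi_n)$ is bounded, we deduce that $\int_X\phi_n\,\dd\mu+\int_Y\psi_n\,\dd\nu$ is bounded. Without altering $F(\xi_n)$ or the feasibility of $\xi_n$ we may replace $\phi_n$ by its tight $c$-transform
\[
\phi_n(x)\;:=\;\inf_{y\in Y}\bigl[\beta_n(x,y)+\lambda_n(x)\cdot f(x,y)+\alpha_n(x)\cdot g(x,y)-\psi_n(y)\bigr],
\]
(measurable in $x$; this only decreases $F$), and normalize $\psi_n$ by an additive constant so that, e.g., $\min_Y\psi_n=0$, which affects neither $\Lambda\xi_n$ nor $F$.

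The crux, and the step where hypothesis \textbf{(H5)} plays its essential role, is producing a uniform $L^\infty$ bound on the Lagrange multiplier $\alpha_n$. For any $r\in L^\infty((X,\mu),\R^N)$ with $\int_Xr\,\dd\mu=0$ and $\Vert r\Vert_\infty\leq\eta$, \textbf{(H5)} furnishes $\pi^r\in\Pi(\mu,\nu)$ with $\int_Yg(x,y)\,\dd\pi^r_x(y)=r(x)$ for $\mu$-a.e.\ $x$; integrating $\Lambda\xi_n\geq 0$ against $\pi^r$ and using the marginal identities yields
\[
\int_X\alpha_n\cdot r\,\dd\mu\;\geq\;\int_X\phi_n\,\dd\mu+\int_Y\psi_n\,\dd\nu-\int_{X\times Y}\beta_n\,\dd\pi^r-\int_{X\times Y}(\lambda_n\odot f)\,\dd\pi^r,
\]
whose right-hand side is uniformly controlled in $n$ and $r$ by the previous step. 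Replacing $r$ by $-r$ gives a matching upper bound, so $\vert\int_X\alpha_n\cdot r\,\dd\mu\vert$ is uniformly bounded over all admissible $r$. Taking $r$ approximately aligned with $\alpha_n/\vert\alpha_n\vert$ on a super-level set $\{\vert\alpha_n\vert\geq t\}$, compensated by a constant on the complement so as to enforce $\int r\,\dd\mu=0$ and $\Vert r\Vert_\infty\leq\eta$, and invoking Lemma~\ref{lemma:moment_on_a_set} to guarantee a $\nu$-massive set of $y$ on which $g(x,\cdot)$ has uniformly non-degenerate component in the chosen direction, promotes this functional bound to a uniform pointwise bound $\Vert\alpha_n\Vert_\infty\leq C$. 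I expect this to be the main technical obstacle.

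Once $\alpha_n$ is uniformly bounded in $L^\infty$, the tight $c$-transform formula gives a uniform $L^\infty$ bound on $\phi_n$, while the symmetric dual transform $\psi_n(y)=\inf_x[\beta_n(x,y)+\lambda_n(x)\cdot f(x,y)+\alpha_n(x)\cdot g(x,y)-\phi_n(x)]$ yields an $n$-independent modulus of continuity on $Y$ built from $\omega_K,\omega_f,\omega_g$ and the $L^\infty$ bounds on $\lambda_n,\alpha_n$. Arzel\`a-Ascoli extracts $\psi_n\to\psi$ uniformly, and Banach-Alaoglu extracts weak-$\ast$ subsequential limits $\phi_n\to\phi$, $\lambda_n\to\lambda$, $\alpha_n\to\alpha$ in the respective $L^\infty(\mu)$ spaces and $\beta_n\to\beta$ in $L^\infty((X,\mu),C(Y))$ (using separability of $C(Y)$ combined with the equicontinuity of $\beta_{n,x}$ in $y$ to justify the last). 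The limit $\xi=(\phi,\psi,\lambda,\alpha,\beta)\in E_{\Relax}$ still satisfies $\Lambda\xi\geq 0$ (the inequality passes to the limit when tested against finite nonnegative measures on $X\times Y$ with $y$-continuous density, a determining family), $\beta_x\in K$ holds $\mu$-a.e.\ by closedness of $K$, and $F(\xi)\leq\liminf_n F(\xi_n)$ follows from convexity of $\theta^\ast$ and from Lemma~\ref{cstarisnormal} together with convexity of $c^\ast_x$. Hence $\xi$ is a minimizer of \eqref{dualpbwotrel}.
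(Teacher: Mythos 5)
Your plan reproduces the first estimates of the paper's proof (uniform bounds on $\beta_n$, $\lambda_n$; boundedness of $\int\phi_n\,\dd\mu+\int\psi_n\,\dd\nu$; the consequence of \textbf{(H5)} that $|\int_X\alpha_n\cdot r\,\dd\mu|\le C$ for all admissible $r$, hence an $L^1(\mu)$ bound on $\alpha_n$ via Lemma~\ref{lem:median}), but the step you yourself flag as ``the main technical obstacle'' is where the argument genuinely fails. The functional inequality $\sup\{\,\int_X\alpha_n\cdot r\,\dd\mu:\|r\|_\infty\le\eta,\ \int r\,\dd\mu=0\,\}\le C$ cannot be ``promoted'' to a uniform $L^\infty$ bound: by Lemma~\ref{lem:median} it is exactly equivalent to the $L^1$ bound $\min_{a}\|\alpha_n-a\|_{L^1(\mu)}\le C/\eta$, and taking $r$ aligned with $\alpha_n/|\alpha_n|$ on super-level sets only recovers $L^1$ information restricted to those sets. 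Lemma~\ref{lemma:moment_on_a_set} does not help in the aggregated form you invoke it; in the paper it is used \emph{pointwise} in $x$, in conjunction with the tight conjugate representation \emph{and a pointwise lower bound on the potential $\phi$}.

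The pointwise lower bound on $\phi$ is precisely what your normalization does not supply: with the tight $c$-transform you get $\phi_n(x)\le C+\inf_y\{\alpha_n(x)\cdot g(x,y)-\psi_n(y)\}\le C-\rho|\alpha_n(x)|+\rho^{-1}\|\psi_n\|_{L^1(\nu)}$, but nothing prevents $\phi_n(x)\to-\infty$ on a $\mu$-non-null set, so this inequality cannot be inverted. The paper's essential extra idea, which your proposal omits, is the \emph{truncation/alternate-minimization trick}: introduce the auxiliary potential $\phi_0^n(x)=\inf_y\{\beta_n(x,y)+\lambda_n(x)\cdot f(x,y)-\psi_n(y)\}$ (the conjugate without the $\alpha$-term, uniformly bounded below since $\psi_n$ is bounded above), set $\tilde\alpha_n=\alpha_n\mathbf{1}_{\{\phi_n\ge\phi_0^n\}}$ and $\tilde\phi_n=\max(\phi_n,\phi_0^n)$. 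This new triple is admissible, is still a minimizing sequence because $\tilde\phi_n\ge\phi_n$ (so $F$ does not increase), and carries the crucial pointwise lower bound $\tilde\phi_n\ge\phi_0^n\ge -C$; only then does the pointwise averaging over the set $A$ from Lemma~\ref{lemma:moment_on_a_set} close into $\|\tilde\alpha_n\|_{L^\infty(\mu)}\le C$. Without this truncation your sequence only satisfies $L^1$ bounds on $\alpha_n$ and $\phi_n$, which do not give enough compactness. A secondary, smaller issue: in the final passage to the limit the paper applies Mazur's lemma to convert weak(-$\ast$) convergence into strong $L^p$ convergence before invoking pointwise lower semicontinuity of $c^*_x$ and $\theta^*$; your direct appeal to weak-$\ast$ lower semicontinuity in $L^\infty((X,\mu),C(Y))$ (whose predual is not one of the standard spaces, since $\MM(Y)$ lacks the Radon--Nikod\'ym property) is not obviously justified.
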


\begin{proof}
Let $\xi^n=(\phi^n, \psi^n, \lambda^n, \alpha^n, \beta^n)\in E^{\N}$ be a minimizing sequence for \eqref{dualpbwot} (hence also for  \eqref{dualpbwotrel} by Corollary \ref{relaxcoro}) i.e.
\begin{equation}\label{minseqxin}
\Lambda \xi^n \geq 0, \; \lim_n F(\xi^n)=\inf \eqref{dualpbwot}=\inf \eqref{dualpbwotrel} \in \R.
\end{equation}
Since we have assumed \textbf{(H4)}  and both the constraint $\Lambda \xi \geq 0$ and the functional $F$ are invariant by replacing $(\phi, \psi)$ by $(\phi-a, \psi+a)$ where $a\in \R$ is a constant we can further assume
\begin{equation}\label{normalizn}
\int_X \phi_n \dd\mu=0, \; \int_X \alpha_n \dd \mu=0, \; \forall n.
\end{equation}
Note that $\beta_x^n \in K$ for every $x\in X$ and every $n$ where $K$ is the compact subset of $C(Y)$ which appears in assumption \textbf{(H3)}. Denoting by $C$ a positive constant that may change from line to line, we may also assume, thanks to the compactness of $K$ in \textbf{(H3)} and the second part of Theorem \ref{dualityformulasforwot}:
 \begin{equation}\label{blbounded}
 \Vert \beta^n\Vert_{\infty} \leq C, \; \Vert \lambda^n\Vert_{\infty} \leq C. 
 \end{equation}
so that the constraint $\Lambda \xi^n \geq 0$ entails 
\begin{equation}\label{lxinpos}
\phi^n(x)+ \psi^n(y) \leq C + \alpha^n(x)\cdot  g(x, y), \; \forall (x,y) \in X\times Y.
\end{equation}
By \eqref{minseqxin}, assumption \textbf{(H2)} and using the fact that $\beta^n$ is uniformly bounded, we have
\begin{align*}
C &\geq F(\xi^n)= \int_{X} (c_x^*(\beta_x^n)+\theta^*(\lambda^n(x)))\dd \mu(x)-\int_{X} \phi^n \dd \mu-\int_Y \psi^n \dd \nu\\
&\geq \int_{X\times Y} \beta^n \dd \widetilde{\pi}  -\int_X c_x(\widetilde{\pi}_x) \dd \mu(x)  -\theta(0) -\int_{X} \phi_n \dd \mu-\int_Y \psi_n \dd \nu\\
&\geq -C -\int_{X} \phi^n \dd \mu-\int_Y \psi^n \dd \nu
\end{align*}
hence
\begin{equation}\label{macrontvc}
\int_{X} \phi^n \dd \mu+\int_Y \psi^n \dd \nu= \int_Y \psi^n \dd \nu \geq -C.
\end{equation}

\smallskip

\textbf{Step1: $L^1$ bound on $\alpha^n$ and consequences}

Let $\pi=\mu\otimes \pi_x \in \Pi(\mu, \nu)$, using \eqref{macrontvc} and integrating inequality \eqref{lxinpos} with respect to $\pi$ we get
\[\int_{X} \alpha^n(x) \Big(\int_Y g(x,y) \dd \pi_x(y) \Big) \dd \mu(x)\geq -C.\]
We deduce from Slater's condition \textbf{(H5)} that for every $r\in L^{\infty}((X,\mu), \R^N)$ with $\Vert r \Vert_{L^{\infty}} \leq \eta$ and $\int_X r \dd \mu=0$ we can find $\pi\in \Pi(\mu, \nu)$ such that $\int_Y g(x, y) \dd \pi_x(y)=-r(x)$ and then we obtain that 
\[\sup \left\{\ \int_X \alpha^n(x) \cdot r(x) \dd \mu(x): \;  \; r\in L^{\infty}((X, \mu), \R^N) , \; \Vert r \Vert_{L^{\infty}} \leq \eta, \; \int_X r \dd \mu=0\right\} \leq C\]
but it is  a well-known fact (see Lemma \ref{lem:median} in The Appendix for details) that the left-hand side of the inequality above coincides with $ \eta \min_{a\in \R^N} \Vert \alpha^n-a\Vert_{L^1(\mu)}$. This implies that  for some sequence of constants $a^n\in \R^N$, $\Vert \alpha^n-a^n\Vert_{L^1(\mu)} \leq C$. However note that the second condition in \eqref{normalizn} ensures $\vert a^n \vert = \left\vert \int (\alpha^n - a^n) \dd \mu \right\vert \leq \Vert \alpha^n-a^n\Vert_{L^1(\mu)}\leq C$ and therefore, we arrive at the bound
\begin{equation}\label{boundl1alpha}
\Vert \alpha^n\Vert_{L^1(\mu)} \leq C.
\end{equation}
Integrating inequality \eqref{lxinpos} with respect to $x$, we also obtain a universal upper bound on $\psi^n$:
\begin{equation}
\psi^n(y) \leq C + \int_X \alpha_n(x) \cdot g(x,y) \dd \mu(x) \leq C + \Vert \alpha^n\Vert_{L^1(\mu)} \;  \Vert g \Vert_{\infty}
\end{equation}
denoting by $\psi^n_+ :=\max(\psi^n , 0)$ the positive part of $\psi^n$,  we deduce from \eqref{macrontvc} 
\begin{equation}\label{boundl1psi}
\Vert \psi^n_+ \Vert_{\infty} \leq C, \; \Vert \psi^n \Vert_{L^1(\nu)} \leq C.
\end{equation}
Using these bounds and integrating inequality \eqref{lxinpos} with respect to $y$, yields $\phi_n(x) \leq C + \vert \alpha^n(x)\vert \;  \Vert g \Vert_{\infty}$ hence a uniform $L^1(\mu)$ bound on $\phi^n_+$ hence also on $\phi^n$ by the first condition in \eqref{normalizn}:
\begin{equation}\label{boundl1phi}
\Vert \phi^n\Vert_{L^1(\mu)} \leq C.
\end{equation}

\smallskip

\textbf{Step 2: alternate minimization, construction of a bounded in $L^{\infty}$ minimizing sequence}

\smallskip

Given $(\lambda, \alpha, \beta) \in C(X, \R^M) \times C(Y, \R^N)\times C(X\times Y)$, define the "shadow transport cost" 
\begin{equation}\label{ozzycost}
c_{\lambda, \alpha, \beta}:=\beta + \lambda \odot f+ \alpha \odot g.
\end{equation}
The partial minimization problem which consists given $(\lambda, \alpha, \beta)$ in 
\[\inf \{F(\phi, \psi, \lambda, \alpha, \beta) \; : \;  (\phi, \psi) \in C(X)\times C(Y), \; \Lambda(\phi, \psi, \lambda, \alpha, \beta)\geq 0\}\]
is obviously equivalent to the following 
\begin{equation}\label{kantodualshadow}
\sup \left\{\int_X \phi \dd \mu + \int_Y \psi \dd \nu\; : \; (\phi, \psi) \in C(X)\times C(Y)\; : \; \phi\oplus \psi \leq c_{\lambda, \alpha, \beta} \right\}
\end{equation}
 which has been very much investigated in optimal transport since it is the so-called  Kantorovich dual of the optimal transport problem between $\mu$ and $\nu$ for the cost $c_{\lambda, \alpha, \beta}$, it is in particular well-known (see \cite{FSbook}, \cite{Villanibook}) that this problem admits at least one solution $\phi$, $\psi$ consisting of $c_{\lambda, \alpha, \beta}$ conjugate potentials i.e. such that for every $(x,y)\in X\times Y$
 \begin{equation}\label{cconjugacy}
 \phi(x)=\min_{y\in Y}\{c_{\lambda, \alpha, \beta}(x,y)-\psi(y)\}, \; \psi(y)=\min_{x\in X} \{c_{\lambda, \alpha, \beta}(x,y)-\phi(x)\}.
 \end{equation}
again by the invariance $(\phi, \psi)\mapsto (\phi-a, \psi+a)$ we may also normalize $\phi$ in such a way that $\int_X \phi \dd \mu=0$. If we replace $\phi^n$ and $\psi^n$ by a solution of  \eqref{kantodualshadow}  for $(\lambda, \alpha, \beta)=(\lambda^n, \alpha^n, \beta^n)$, we still obtain a minimizing sequence for which we can further assume:
\begin{equation}\label{cconjugacyphin}
 \phi^n(x)=\min_{y\in Y}\{\beta^n(x,y) + \lambda^n(x)\cdot  f(x,y)+ \alpha^n(x) \cdot g(x,y)-\psi^n(y)\}, \; \forall x\in X
  \end{equation}
and
\begin{equation}\label{cconjugacypsin}
 \psi^n(y)=\min_{x\in X}\{ \beta^n(x,y) + \lambda^n(x)\cdot  f(x,y)+ \alpha^n(x) \cdot g(x,y)-\phi^n(x)\}, \; \forall y\in Y. 
  \end{equation}
Now define
\[\phi^n_0(x)=\min_{y\in Y}\{\beta^n(x,y) + \lambda^n(x)\cdot  f(x,y)-\psi^n(y)\}, \; \forall x\in X\]
and the (Borel but possibly discontinuous) competitor of $\alpha^n$
\[\tal^n(x):=\begin{cases} \alpha^n(x) \mbox{ if  $\phi^n(x) \geq \phi_0^n(x)$}\\ 0 \mbox{ otherwise}\end{cases}\]
and 
\begin{equation}\label{ctfin}
\tfi^n(x)=\min_{y\in Y}\{\beta^n(x,y) + \lambda^n(x)\cdot  f(x,y)+ \tal^n(x) \cdot g(x,y)-\psi^n(y)\}, \; \forall x\in X.
\end{equation}
By construction $(\tfi^n, \psi^n, \lambda^n, \tal^n, \beta^n)\in E_{\Relax}$, $\Lambda(\tfi^n, \psi^n, \lambda^n, \tal^n, \beta^n)\geq 0$ in $L^{\infty}((X,\mu), C(Y))$ and $\tfi^n=\max(\phi^n, \phi_0^n) \geq \phi^n$ so that $F(\tfi^n, \psi^n, \lambda^n, \tal^n, \beta^n) \leq F(\xi^n)$ and  $(\tfi^n, \psi^n, \lambda^n, \tal^n, \beta^n)$ is again a minimizing sequence for \eqref{dualpbwotrel} (but now in $E_\Relax$). From the fact that $\tfi^n(x) \geq \phi_0^n(x)$, by the definition of $\phi_0^n(x)$ and the uniform bounds \eqref{blbounded} on $\beta^n$ and $\lambda^n$ and  \eqref{boundl1psi} on $\psi^n_+$, we get:
\[\begin{split}
-C  \leq   \phi_0^n(x) \leq  \tfi^n(x) = \min_{y\in Y}\{\beta^n(x,y) + \lambda^n(x)\cdot  f(x,y)+ \tal^n(x) \cdot g(x,y)-\psi^n(y)\} \\
  \leq C + \inf_{y \in Y} \{ \tal^n(x) \cdot g(x,y)-\psi^n(y)\}.
  \end{split}\]
Assume for a moment that $\mu$ is not a Dirac mass, then, thanks to Lemma \ref{lemma:moment_on_a_set}, there exists $\rho>0$ such that for every $x\in \spt(\mu)$, we can find $A\subset Y$ such that $\nu(A)\geq \rho$ and $\tal^n(x) \cdot g(x,y) \leq -\rho \vert \tal^n(x)\vert$ on $A$ hence
\[-C \leq  \frac{1}{\nu(A)} \int_{A} (\tal^n(x) \cdot g(x,y)-\psi^n(y)) \dd \nu(y) \leq -\rho \vert \tal^n(x)\vert + \frac{1}{\rho} \Vert \psi^n\Vert_{L^1(\nu)}\]
together with the uniform $L^1$ bound on $\psi^n$ in \eqref{boundl1psi} we obtain a uniform bound on $\tal^n$:
\begin{equation}\label{talnbound}
\Vert \tal^n\Vert_{L^\infty(\mu)} \leq C. 
\end{equation}
In case $\mu=\delta_{x_0}$ is a Dirac mass, we can assume that $X=\{x_0\}$ and \eqref{normalizn} directly implies that $\alpha^n=\tal^n=0$ hence the bound \eqref{talnbound} is obvious in this case. Define now 
\begin{equation}\label{ctpsn}
\tps^n(y)=\mu-\mathrm{essinf}_{x\in X}\{ \beta^n(x,y) + \lambda^n(x)\cdot  f(x,y)+ \tal^n(x) \cdot g(x,y)-\tfi^n(x)\}, \; \forall y\in Y
\end{equation}
and $\txi^n:=(\tfi^n, \tps^n, \lambda^n, \tal^n, \beta^n)$. It directly follows from \eqref{ctfin} that $\tps^n \geq \psi^n$  and by construction $\Lambda \txi^n\geq 0$  in $L^{\infty}((X,\mu), C(Y))$ so $ \txi^n$ is admissible for the relaxed problem \eqref{dualpbwotrel} and $F(\txi^n)   \leq F(\tfi^n, \psi^n, \lambda^n, \tal^n, \beta^n) \leq F(\xi^n)$ so that $\txi^n$ is again a  minimizing sequence  for\eqref{dualpbwotrel}. Since $\beta^n$, $\lambda^n$, $\tal^n$ are uniformly bounded and $f$ and $g$ are uniformly continuous the family of functions $y\in Y \mapsto \beta^n(x,y) + \lambda^n(x)\cdot  f(x,y)+ \tal^n(x) \cdot g(x,y)$ admits a modulus of continuity that does not depend neither on $n$ nor on $x$, hence the sequence $\tps^n$ is uniformly equicontinuous, since it is also bounded in $L^1$, it is uniformly bounded and relatively compact in $C(Y)$ thanks to the Arzel\`a-Ascoli Theorem, therefore (up to a-not relabeled subsequence)
\begin{equation}
\Vert \tps^n \Vert_{\infty} \leq C, \; \mbox{ and there is some $\psi \in C(Y)$ such that } \Vert \tps^n-\psi\Vert_{\infty} \to 0.
 \end{equation}
 Let us observe that since $\tps^n \geq \psi^n$ by \eqref{ctfin} we have
\begin{equation}\label{conjugin}
\tfi^n(x)\geq \min_{y\in Y}\{\beta^n(x,y) + \lambda^n(x)\cdot  f(x,y)+ \tal^n(x) \cdot g(x,y)-\tps^n(y)\}, \; \forall x\in X
\end{equation}
but the definition of $\tps^n$ also yields that for $\mu$ a.e. $x$, one has 
\[\tfi^n(x) \leq \min_{y\in Y}\{\beta^n(x,y) + \lambda^n(x)\cdot  f(x,y)+ \tal^n(x) \cdot g(x,y)-\tps^n(y)\}\]
 hence \eqref{conjugin} is an equality $\mu$-a.e.. Having uniform bounds on $\tps^n$, $\lambda^n$, $\tal^n$ and $\beta^n$ gives an obvious uniform bound on the right-hand side of \eqref{conjugin} i.e. on $\tfi^n$. In conclusion, in this step we have found a minimizing sequence $\txi^n=(\tfi^n, \tps^n, \lambda^n, \tal^n, \beta^n)\in E_{\Relax}$ such that 
 \begin{equation}\label{whereweare}
  \beta^n_x \in K, \mbox{ for $\mu$ a.e $x$}, \; \Vert \tal^n\Vert_{L^\infty(\mu)}+  \Vert \tfi^n\Vert_{L^\infty(\mu)}+ \Vert \lambda^n\Vert_{L^\infty(\mu)}\leq C \mbox{ and $\tps^n \to \psi$ in $C(Y)$}.
\end{equation}

\smallskip

\textbf{Step 3: convergence and conclusion} Passing to subsequences if necessary, by Banach-Alaoglu theorem we may assume that there is some $(\phi, \alpha, \lambda) \in L^{2}((X,\mu), \R\times\R^N \times \R^M)$ to which $(\tfi^n, \tal^n, \lambda^n)$ converges weakly in $L^2(\mu)$. Of course, the bounds from \eqref{whereweare} pass to weak limits so that $(\phi, \alpha, \lambda) \in L^{\infty}((X,\mu), \R\times\R^N \times \R^M) $ and  $\Vert \alpha\Vert_{L^\infty(\mu)}+  \Vert \phi \Vert_{L^\infty(\mu)}+ \Vert \lambda\Vert_{L^\infty(\mu)}\leq C$. By compactness of $Y$, we can find $D:=\{y_k, \; k\in \N\}$ a dense sequence in $Y$ and define $\onD:= \sum_{k=1}^{\infty} 2^{-k} \delta_{y_k}$; and we can assume that $\beta^n$ converges weakly to some $\beta_D $ in $L^2(\mu \otimes \onD)$. By Mazur's Lemma (see \cite{EkelandTemam}, Chapter I), and recalling the convergence of $\tps^n$ to $\psi$ in $C(Y)$ (see \eqref{whereweare}), we can thus find for each $n$, an element $\hxi^n =(\hfi^n, \hps^n,  \hl^n, \hb^n, \hal^n)$ in the convex hull of $\{\txi^k, k\geq n\}$ such that
\begin{equation}\label{oyep1}
\hxi^n \to \xi:=(\phi, \psi, \lambda, \alpha, \beta_D) \mbox{ strongly in } L^2(\mu)\times C(Y)\times L^2(\mu, \R^M)\times L^2(\mu, \R^N)  \times L^2(\mu \otimes \onD).
\end{equation}
By convexity of $F$ and linearity of $\Lambda$ observe that $\hxi^n$  is again a  minimizing sequence  for \eqref{dualpbwotrel}. Taking a further extraction if necessary, we can also assume that 
\begin{equation}\label{oyep2}
(\hfi^n,  \hl^n, \hal^n) \to (\phi, \lambda, \alpha)  \mbox{ $\mu$-a.e.}
\end{equation}
and also  using \eqref{whereweare} and Lebesgue's dominated convergence Theorem that 
\begin{equation}\label{oyep3}
(\hfi^n,  \hl^n, \hal^n) \to (\phi, \lambda, \alpha)  \mbox{ in $L^p(\mu)$ for every $p\in [1,+\infty)$ and $\hps^n \to \psi$ in $C(Y)$}.
\end{equation}

 By the same argument, we may   assume that there exists a subset $A$ of $X$ such that $\mu(X\setminus A)=0$ and $\hb^n(x,y)\to \beta_D(x,y)$ for every $x\in A$ and every $y$ in the dense set $D$. For $x\in A$, $\hb^n(x,.)\in K$ hence by compactness of $K$ it has a subsequence which converges in $C(Y)$ to some limit which we denote  $\beta_x$, obviously $\beta_x$ and $\beta_D(x,.)$ agree on $D$ which since $D$ is dense and $\hb^n(x,.)$ is uniformly equicontinuous determines the limit $\beta_x$ uniquely and shows  the full convergence of $\hb_x^n$ to $\beta_x$. Setting $\beta(x,y):=\beta_x(y)$ we therefore have $\hb^n_x \to \beta(x,.)$ in $C(Y)$ for every $x\in A$. By the fact that $\sup_{x\in X} \Vert \hb^n_x \Vert_{\infty}\leq C$, Lebesgue's dominated convergence Theorem entails the following $L^p(\mu, C(Y))$ convergence
\begin{equation}\label{oyep4}
\Vert \hb^n-\beta\Vert^p_{L^p((X,\mu), C(Y))}= \int_X \Vert \hb^n_x  -\beta_x \Vert_{\infty}^p \dd \mu(x) \to 0, \; \forall p\in [1, +\infty).
\end{equation}
Denoting by $A'\subset X$ a set of full $\mu$-measure on which $(\hfi^n, \hl^n, \hal^n)$ converges to $ (\phi, \lambda, \alpha)$, we have convergence of $\Lambda \hxi^n$ to  $\Lambda \xi$ pointwise on $(A\cap A')\times Y$ as well as in $L^p(((X,\mu), C(Y))$ for every $p\in [1, \infty)$. Therefore  $\xi \in E_{\Relax}$, $\Lambda \xi \in L^{\infty} ((X,\mu), C(Y))$ and $\Lambda \xi \geq 0$ so that $\xi$ is admissible for  \eqref{dualpbwotrel}. By  \eqref{oyep1},  we firstly have
\[\lim_n \Big(\int_X \hfi^n \dd \mu+ \int_Y \ \hps^n \dd \nu \Big)= \int_X \phi \dd \mu+ \int_Y \psi \dd \nu,\]
secondly, by \eqref{oyep2} and Fatou's Lemma, we have
\[\liminf_n \int_X \theta(\hl^n(x)) \dd \mu(x) \geq  \int_X \theta(\lambda(x)) \dd \mu(x)\]
finally, it follows from \eqref{cstarlip1} that $c_x^*(\hb^n_x) \geq c_x^*(\beta_x)-\Vert \hb_x^n-\beta_x\Vert_{\infty}$, integrating this inequality and using \eqref{oyep4} yields
\[\liminf_n \int_X  c_x^*(\hb^n_x) \dd \mu(x) \geq  \int_X c_x^*(\beta_x)  \dd \mu(x).\]
All this shows that $F(\xi) \leq \liminf_n F(\hxi^n)$ so that $\xi$ solves  \eqref{dualpbwotrel}.

\end{proof}

\begin{remark}
Thanks to the results of paragraph \ref{par-vqr}, one can see that Theorem \ref{existoptpot} implies the existence of bounded Lagrange multipliers for optimal transport problems with conditional independence constraints arising in vector quantile regression under significantly weaker assumptions than  in \cite{CCGBeyond}. 
\end{remark}

\subsection{The entropic case $\eps>0$}

We now consider the case $\eps>0$ for which, assuming $Y$ is finite dimensional (so as to be able to apply Lemma \ref{lem:slater-Hfinite}), we have:

\begin{theorem}\label{existoptpoteps}
If $Y$ is a compact subset of $\R^d$ and \textbf{(H1)-(H2)-(H3)-(H4)-(H5)} hold then  the relaxed dual problem \eqref{dualpbwotreleps} admits at least one solution.
\end{theorem}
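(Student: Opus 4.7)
The plan is to mimic the proof of Theorem \ref{existoptpot} step by step, replacing the hard constraint $\Lambda\xi\geq 0$ by the two-sided bound on $F(\xi^n)+G_\eps(\Lambda\xi^n)$ holding along a minimizing sequence, and replacing the $c$-concave Kantorovich updates of $(\phi^n,\psi^n)$ by the Sinkhorn soft-min updates $\SMn,\SMm$. The main novelty compared to the unregularized proof is in upgrading the $L^1$-bound on $\alpha^n$ to an $L^\infty$-bound, where the pointwise $\min$-estimate used in the unregularized case must be replaced by a Sinkhorn-equation argument.

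\emph{$L^1$-bound on $\alpha^n$.} Along a minimizing sequence $\xi^n$ with the normalizations $\int\phi^n d\mu=\int\alpha^n d\mu=0$, $\|\lambda^n\|_\infty\leq C(\theta,f)$ and $\beta^n_x\in K$, I combine Fenchel--Young for $G_\eps$, namely $G_\eps(\Lambda\xi^n)\geq -\int\Lambda\xi^n d\pi - \eps H(\pi|\mu\otimes\nu)+\eps$, with the primal Young bound $F(\xi^n)\geq \int\Lambda\xi^n d\pi - \int\alpha^n\odot g\,d\pi - J(\pi)$, both holding for any $\pi\in\Pi(\mu,\nu)$. The terms in $\Lambda\xi^n$ cancel and $F+G_\eps\leq C$ yields
\[-\!\int_X \alpha^n(x)\cdot\!\!\int_Y g(x,y)d\pi_x(y) d\mu(x) \leq C+J(\pi)+\eps H(\pi|\mu\otimes\nu).\]
Lemma \ref{lem:slater-Hfinite} is what I need: for any $r$ with $\int r\, d\mu=0$ and $\|r\|_\infty\leq\eta/2$, it provides $\pi$ with $\int_Y g(x,y)d\pi_x=r$ and $H(\pi|\mu\otimes\nu)$ bounded by a constant; $J(\pi)$ is also uniformly bounded thanks to (H3) and the continuity of $\theta$ on bounded sets. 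Passing to the supremum over such $r$ and applying Lemma \ref{lem:median} then yields $\|\alpha^n\|_{L^1(\mu)}\leq C$.

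\emph{Sinkhorn updates and $L^\infty$-bound on $\tal^n$.} Partially minimizing $F+G_\eps$ in $(\phi,\psi)$ at fixed $(\lambda^n,\alpha^n,\beta^n)$ amounts to enforcing the Schr\"odinger equations $\int_Y e^{-\Lambda\xi^n/\eps}d\nu = \int_X e^{-\Lambda\xi^n/\eps}d\mu = 1$, equivalently $\phi^n=\SMn(\beta^n+\lambda^n\odot f+\alpha^n\odot g-\psi^n)$ and $\psi^n=\SMm(\beta^n+\lambda^n\odot f+\alpha^n\odot g-\phi^n)$. Applying Jensen to the $\psi^n$-formula together with $\int\phi^n d\mu=0$ and the $L^1$-bound on $\alpha^n$ gives $\|\psi^n_+\|_\infty\leq C$, and combined with $\int\psi^n d\nu\geq -C$ (from $F\geq -C$) this produces $\|\psi^n\|_{L^1(\nu)}\leq C$. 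Now I perform the modification of Step 2 of Theorem \ref{existoptpot}: set $\phi^n_0:=\SMn(\beta^n+\lambda^n\odot f-\psi^n)$ (pointwise bounded by Jensen), define $\tal^n(x)=\alpha^n(x)\mathbf{1}_{\phi^n(x)\geq\phi^n_0(x)}$, and let $\tfi^n$ be the Sinkhorn update with $(\tal^n,\psi^n)$, so that $\tfi^n=\phi^n\vee\phi^n_0\geq -C$ and the objective only decreases. The crucial \emph{upper} bound on $\tfi^n$ follows directly from its Sinkhorn equation: by restricting the integral $\int_Y e^{(\psi^n - \beta^n - \lambda^n\odot f - \tal^n\odot g + \tfi^n)/\eps}d\nu = 1$ to $A'(x) = A(x,-\tal^n(x)/|\tal^n(x)|) \cap \{\psi^n\geq -2C/\rho\}$ (with $A(x,\cdot)$ from Lemma \ref{lemma:moment_on_a_set} and $\nu(A')\geq \rho/2$ by the Markov inequality using $\|\psi^n\|_{L^1(\nu)}\leq C$), one obtains
\[ e^{-\tfi^n(x)/\eps}\geq \frac{\rho}{2}\exp\!\left(\frac{-2C/\rho - C+\rho|\tal^n(x)|}{\eps}\right),\]
i.e. $\tfi^n(x)\leq C-\rho|\tal^n(x)|$, and combining with $\tfi^n\geq -C$ gives $\|\tal^n\|_{L^\infty(\mu)}\leq 2C/\rho$. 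This step replacing the simple $\min$-bound by a Sinkhorn-restriction estimate is where I expect the main technical difficulty to lie.

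\emph{Equicontinuity of $\tps^n$ and passage to the limit.} With $\tal^n$ now bounded in $L^\infty(\mu)$ and $\tfi^n$ bounded in $L^\infty(\mu)$, redefining $\tps^n := \SMm(\beta^n+\lambda^n\odot f+\tal^n\odot g-\tfi^n)$ produces a sequence in $C(Y)$ with a common modulus of continuity inherited from the common modulus of $K$ and the uniform continuity of $f,g$, and a uniform $L^\infty$-bound obtained by Jensen; Arzel\`a--Ascoli then extracts a subsequence converging in $C(Y)$ to some $\psi$. The remaining Mazur/Banach--Alaoglu extraction to obtain weak limits $(\phi, \lambda, \alpha) \in L^\infty$ and $\beta_x\in K$, together with the lower semicontinuity of $F$ and the continuity of $G_\eps$ under dominated convergence (with uniform majorant $e^{C/\eps}$ for $e^{-\Lambda\xi^n/\eps}$), is essentially identical to Step 3 in the proof of Theorem \ref{existoptpot} and produces a minimizer $\xi^\star\in E_{\Relax}$ of \eqref{dualpbwotreleps}.
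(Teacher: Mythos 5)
Your proof is correct and follows essentially the same structure as the paper's: minimizing sequence with normalizations, $L^1$-bound on $\alpha^n$ via Lemma~\ref{lem:slater-Hfinite} and Lemma~\ref{lem:median}, Sinkhorn (soft-min) replacement of the $c$-conjugacy updates, truncated multiplier $\tal^n$ with the soft-min upper bound on $\tfi^n$ to get $\|\tal^n\|_{L^\infty}\leq C$, equicontinuity of $\tps^n$, and Mazur extraction. The only genuine variation is in the $L^\infty$-bound on $\tal^n$: the paper restricts the soft-min integral to $A$ from Lemma~\ref{lemma:moment_on_a_set} and applies Jensen's inequality to convert $\eps\log\int_A e^{(\cdot)/\eps}d\nu$ into averages, whereas you further intersect $A$ with $\{\psi^n\geq -2C/\rho\}$ (with Markov controlling the excess measure) to get a pointwise lower bound on the integrand. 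Both yield $\tfi^n(x)\leq C-\rho|\tal^n(x)|$; the Jensen route is slightly more economical but your Markov-restriction argument is equally valid.
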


\begin{proof}
As in the proof of Theorem \ref{existoptpot} we pick $\xi^n=(\phi^n, \psi^n, \lambda^n, \alpha^n, \beta^n)\in E^{\N}$ a minimizing sequence for \eqref{dualpbwotreleps} for which the bounds \eqref{blbounded} and the normalization \eqref{normalizn} hold. We will again always denote by $C$ a positive constant that may vary from a line to another. Let $\pi\in \Pi(\mu, \nu)$, it follows from \eqref{ababayrou} and Young's inequality that 
\[G_\eps( \Lambda \xi^n) \geq -\int_{X\times Y} \Lambda \xi^n \dd \pi-\eps H(\pi \vert \mu \otimes \nu)+\eps\]
Since $\xi^n$ is a minimizing sequence, and bounding from below the integral terms involving $c_x^*(\beta_x^n)$ and $\theta^*(\lambda^n)$ as we did in the proof of Theorem \ref{existoptpot}, we arrive at 
\begin{align*}
C&\geq  -\int_{X} \phi^n \dd \mu -\int_Y \psi^n \dd \mu+ G_\eps (\Lambda \xi^n) \geq-\int_{X} \phi^n \dd \mu -\int_Y \psi^n \dd \mu -\int_{X\times Y} \Lambda \xi^n \dd \pi -\eps H(\pi \vert \mu \otimes \nu)+\eps\\
&=-C-\int_{X\times Y} (\Lambda \xi^n +\phi^n  \oplus \psi^n) \dd \pi -\eps H(\pi \vert \mu \otimes \nu)+\eps\\
&\geq -C -\int_{X \times Y} (\beta^n+\alpha^n \odot f + \lambda^n \odot g) \dd \pi  -\eps H(\pi \vert \mu \otimes \nu)\\
&\geq -C -\int_{X \times Y} \alpha^n \odot g \dd \pi  -\eps H(\pi \vert \mu \otimes \nu)
\end{align*}
where we used the fact $\pi \in \Pi(\mu, \nu)$ in the second line,  the  definition of $\Lambda \xi^n$ in the third one and the bound \eqref{blbounded} in the last one.  Note that the first inequality above together with the positivity of $G_\eps$ immediately gives the bound  \eqref{macrontvc}, and from the last one,  we get for every $k \geq 0$:
\[ \sup \left\{ -\int_{X \times Y} \alpha^n \odot g \dd \pi \; :  \pi \in \Pi(\mu, \nu) \; : \; H(\pi \vert \mu\otimes \nu) \leq k  \right\} \leq C+k \eps.\]
Let $\eta>0$ be as in assumption \textbf{(H5)}. Setting $k:= -d \log(\delta)+d\log\left(\mathrm{diam}_\infty(Y)+2\right)$, with $\de>0 $ small enough for having $\omega_g( \sqrt{d} \de) \leq \frac{\eta}{2}$ and $k \geq 0$,  we deduce from Lemma \ref{lem:slater-Hfinite} that the left-hand side of the previous inequality is larger than
\[\sup \left\{\ \int_X \alpha^n(x) \cdot r(x) \dd \mu(x), \;  \; r\in L^{\infty}((X, \mu), \R^N) , \; \Vert r \Vert_{L^{\infty}} \leq \frac{\eta}{2}, \; \int_X r \dd \mu=0\right\} \]
Thanks to \eqref{normalizn}, we derive a uniform bound \eqref{boundl1alpha} on $\Vert \alpha^n\Vert_{L^1}$ as before.

\smallskip

 Let us introduce the following notation for the so-called softmin of functions $a$ and $b$ in $L^{\infty}(\mu)$ and $L^{\infty}(\nu)$ respectively:
\[\SMm (a ):= -\eps \log \Big(\int_X e^{-\frac{a}{\eps}} \dd \mu\Big), \; \SMn( b):= -\eps \log \Big(\int_Y e^{-\frac{b}{\eps}} \dd \mu\Big)\]
and observe the elementary inequalities
\begin{equation}\label{basicsm}
 \mu-\mathrm{essinf} a \leq \SMm (a)  \leq \int_X a \dd \mu, \mbox{ and }  \; \nu-\mathrm{essinf} b \leq \SMn( b)  \leq \int_Y b \dd \nu. 
\end{equation}

Now observe that for fixed ($\lambda, \alpha, \beta)\in C(X, \R^{M})\times C(X, \R^N)\times C(X\times Y)$ (or more generally $L^{\infty}(\mu, \R^{M})\times L^{\infty}(\mu, \R^{N})\times L^{\infty}(\mu\otimes \nu)$) defining $c_{\lambda, \alpha, \beta}$ by \eqref{ozzycost}, the problem of minimizing $F+ G_\eps \circ \Lambda$ with respect to $(\phi, \psi)\in C(X)\times C(Y)$   is equivalent to following regularized version of \eqref{kantodualshadow}:
\begin{equation}\label{kantodualshadoweps}
\sup_{ (\phi, \psi) \in C(X)\times C(Y)} \left\{\int_X \phi \dd \mu + \int_Y \psi \dd \nu -\eps \int_{X\times Y} \exp\Big(-\frac{c_{\lambda, \alpha, \beta}+ \phi\oplus \psi}{\eps}  \Big) \dd (\mu \otimes \nu)  \right\}
\end{equation}
this type of minimization problems has been very much studied in the context of the Sinkhorn algorithm and in particular it is  well-known (see \cite{DMG}, \cite{CarlierSinkhorn}, \cite{Nutz2021} and the references therein) that it possesses a solution $(\phi, \ps)$  (where the normalization  $\int_X \phi \dd \mu=0$ can be imposed without loss of generality) which satisfies the following system (often called the Schr\"{o}dinger system, see \cite{Leo14}) of optimality conditions
\[\phi(x)=\SMn (c_{\lambda, \alpha, \beta} (x, \cdot) -\psi), \; \forall x\in X, \; \psi(y)=\SMm (c_{\lambda, \alpha, \beta}(\cdot ,y)-\phi),  \forall y\in Y.\]
We may therefore assume that our minimizing sequence satisfies
\begin{equation}\label{fipsiconjeps1}
\phi^n(x)=\SMn (c_{\lambda^n, \alpha^n, \beta^n} (x, \cdot) -\psi^n), \; \forall x\in X, 
\end{equation}
and 
\begin{equation}\label{fipsiconjeps2}
 \psi^n(y)=\SMm (c_{\lambda^n, \alpha^n, \beta^n}(\cdot ,y)-\phi^n),  \forall y\in Y.
 \end{equation}
From \eqref{fipsiconjeps2}, \eqref{basicsm}, the uniform bounds on $(\lambda^n, \beta^n)$ and \eqref{normalizn} we deduce 
\[\psi^n(y) \leq  \int_X c_{\lambda^n, \alpha^n, \beta^n} (x, y)  \dd \mu(x) -\int_X \phi^n \dd \mu \leq C+ \Vert \alpha^n \Vert_{L^1(\mu)} \Vert g \Vert_{\infty}.\] 
Using the uniform bound \eqref{boundl1alpha} on $\Vert \alpha^n\Vert_{L^1}$, we therefore obtain  a uniform upper bound on $\psi^n$ hence, from \eqref{macrontvc} also a uniform $L^1$ bound on $\psi^n$, i.e. we arrive at  \eqref{boundl1psi}. In a similar way, \eqref{fipsiconjeps1} and the previous bounds imply a pointwise inequality of the form $\phi^n(x) \leq C + \vert \alpha^n(x)\vert \Vert g \Vert_{\infty} + \Vert \psi^n\Vert_{L^1(\nu)}$ which, together with \eqref{boundl1alpha} and \eqref{boundl1psi}  yields the uniform $L^1$ bound \eqref{boundl1phi} on $\phi^n$. 

\smallskip

As before, the crucial point in the proof now is to improve the $L^1$ bound \eqref{boundl1alpha} into an $L^{\infty}$ bound. For this, we use a similar strategy setting
\[\phi_0^n(x):=\SMn (c_{\lambda^n, 0, \beta^n} (x, \cdot) -\psi^n), \; \tal^n(x):=\begin{cases} \alpha^n(x) \mbox{ if  $\phi^n(x) \geq \phi_0^n(x)$}\\ 0 \mbox{ otherwise}\end{cases}  \; \forall x\in X\]
and 
\begin{equation}\label{ctfineps}
\tfi^n(x)=\SMn (c_{\lambda^n, \tal^n, \beta^n} (x, \cdot) -\psi^n)=\max(\phi^n(x), \phi_0^n(x)), \;  \forall x\in X
\end{equation}
since $\tfi^n\geq \phi^n$,  $(\tfi^n, \psi^n, \lambda^n, \tal^n, \beta^n)$ is again a minimizing sequence for \eqref{dualpbwotreleps}. Our uniform bounds on $\lambda^n$, $\beta^n$ and $\psi^n_+$ readily give the uniform lower bound  $\phi_0^n\geq -C$, hence with \eqref{blbounded}, we have
\begin{align*}
-C & \leq \tfi^n(x) =-\eps \log \Big(\int_Y \exp\Big(\frac{-c_{\lambda^n, \tal^n, \beta^n} (x, y) +\psi^n(y)}{\eps}\Big) \dd \nu(y)\Big)\\
&\leq C-\eps\log \Big(\int_Y \exp\Big(\frac{-\tal^n(x) \cdot g(x,y) +\psi^n(y)}{\eps}\Big) \dd \nu(y)\Big).
\end{align*}
Now observe that if $\mu$ is not a Dirac mass (recall that as in the proof of Theorem  \ref{existoptpot}, if $\mu$ is a Dirac mass an $L^\infty$ bound on $\tal^n$ can be directly be deduced from \eqref{normalizn}) for every $n$ and $x\in \spt(\mu)$, Lemma \ref{lemma:moment_on_a_set} guarantees that there exists a measurable set $A\subset Y$ such that for some constant $\rho>0$ (independent of $x$ and $n$), $\nu(A) \geq \rho$ and $\tal^n(x) \cdot g(x,y) \leq -\rho \vert \tal^n(x)\vert$ for every $y\in A$, we thus obtain
\begin{align}
-C & \leq -\eps \log \Big(\int_A \exp\Big(\frac{-\tal^n(x) \cdot g(x,y) +\psi^n(y)}{\eps}\Big) \dd \nu(y)\Big) \nonumber \\
&=-\eps \log(\nu(A)) -\eps \log \Big(\frac{1}{\nu(A)} \int_A \exp\Big(\frac{-\tal^n(x) \cdot g(x,y) +\psi^n(y)}{\eps}\Big) \dd \nu(y)\Big) \nonumber\\
&\leq -\eps \log(\rho)+\frac{1}{\nu(A)}  \int_A  \tal^n(x) \cdot g(x,y) \dd\nu(y) -\frac{1}{\nu(A)} \int_A \psi^n \dd \nu \nonumber\\
&\leq  -\eps \log(\rho) -\rho \vert \tal^n(x)\vert + \frac{1}{\rho} \Vert \psi^n \Vert_{L^1(\nu)} \label{eq:linfty-bound-alpha}
\end{align}
where we used Jensen's inequality in the third line. Together with  \eqref{boundl1psi} we reach the desired uniform $L^\infty$ bound \eqref{talnbound} on $\tal^n$. We have already observed that $\tfi^n \geq \phi_0^n \geq -C$, but from \eqref{talnbound} and $\tfi^n(x) \leq C + \vert \tal^n(x)\vert \Vert g \Vert_{\infty} + \Vert \psi^n\Vert_{L^1(\nu)}$ we deduce a uniform upper bound as well hence deduce that $\Vert \phi^n \Vert_{L^{\infty}(\mu)} \leq C$. We finally replace $\psi^n$ by the minimizer of $\psi \in C(Y) \mapsto (F+G_\eps \circ \Lambda)(\tfi^n, \cdot, \lambda^n, \tal^n, \beta^n)$ which is explicitly given by
\begin{equation}\label{changepsino}
\tps^n(y):= \SMm (c_{\lambda^n, \tal^n, \beta^n}(\cdot ,y)-\tfi^n),  \forall y\in Y.
\end{equation}
By construction $\txi^n:=(\tfi^n, \tps^n, \lambda^n, \tal^n, \beta^n)$ is another minimizing sequence  for \eqref{dualpbwotreleps}. Note that not only $\tps^n$ is uniformly  bounded in $L^\infty$ but it is also uniformly equicontinous thanks to our uniform bounds on  $\lambda^n$, $\tal^n$ , the compactness of $K$ in assumption \textbf{(H3)} and the uniform continuity of $f$ and $g$ on $X\times Y$. More precisely, it easily follows from \eqref{changepsino} that for every $y_1$ and $y_2$ in $Y$, 
\[\begin{split}
 \vert \tps^n (y_1)-\tps^n(y_2) \vert &  \leq  \sup_{\gamma \in K} \vert \gamma(y_1)-\gamma(y_2)\vert \\
 &+ \sup_{x\in X} \{ \Vert \tal^n\Vert_{\infty} \vert g(x,y_1)-g(x,y_2)\vert + \Vert \lambda^n\Vert_{\infty}  \vert f(x,y_1)-f(x,y_2)\vert\}
 \end{split}\]
thanks to the compactness of $K$ and the Arzel\`a-Ascoli Theorem, this shows precompactness of $\tps^n$ in $C(Y)$. We have  reached the very same conclusion as at the end of Step 2 in the proof of Theorem \ref{existoptpot}. The rest of the proof is exactly the same: construct thanks to Mazur's Lemma another minimizing sequence $\hxi^n$ which also remains uniformly bounded and  converges to some $\xi\in E_{\Relax}$ in the very same strong sense as in \eqref{oyep2}-\eqref{oyep3}-\eqref{oyep4}. The fact that $\xi$ solves \eqref{dualpbwotreleps} is obtained in the same way (and applying Lebesgue's dominated convergence Theorem for the convergence of $G_\eps(\Lambda \hxi^n)$ to  $G_\eps(\Lambda \hxi)$).

\end{proof}

\begin{remark} Let us briefly compare Theorem \ref{existoptpoteps} with the results of \cite{NutzWieselEMOT} on the martingale Schr\"{o}dinger bridge problem. In \cite{NutzWieselEMOT}, the authors establish dual attainment for an entropic version of the martingale optimal problem between two measures on the real line under an irreducibility condition and a condition on the supports which is slightly weaker than the nondegeneracy  condition from definition \ref{def:nondegeneracy}. Theorem \ref{existoptpoteps} together with Proposition \ref{prop:stable-slater} and  Theorem \ref{thm:stab-irred-support} grants dual attainment under irreducibility and nondegeneracy but in higher dimensions and also for more general costs than in \cite{NutzWieselEMOT}.
\end{remark}

\begin{remark}
\label{rem:norme_uniforme}
Let us observe that the constant bounding the $L^\infty$ norms of $\phi,\psi,\alpha,\lambda$ and $\beta$ is uniform for $\eps$ in a neighborhood of $0$. Indeed, the key initial estimate to derive the universal upper bound relies on the value of the problem for some competitor given by \textbf{(H2)}. Since the competitor for the entropic case is also valid for the unregularized case this initial estimate behaves linearly in $\eps$ as it vanishes.
Moreover the bound on $\Vert \alpha \Vert_{L^1(\mu)}$ relies on Lemma \ref{lem:slater-Hfinite} which is independent of $\eps$. Finally, the estimate on $\Vert \alpha \Vert_{L^\infty(\mu)}$ also depends linearly on $\eps$ as shown by \eqref{eq:linfty-bound-alpha}.

\end{remark}
\subsection{Primal-Dual optimality conditions}

Now we have dual attainment results at our disposal, we can fully characterize optimal plans.

\begin{theorem}\label{theorem:opticond}
Assume \textbf{(H1)-(H2)-(H3)-(H4)-(H5)}, and denote by $\xi = (\phi,\psi,\al,\be,\la)$ a solution of \eqref{dualpbwotrel}.\\
(i) Then $\al$ is a Lagrange multiplier for the constraint $\int_Y g(x,y) \dd \pi_x(y) = 0$ for $\mu$-a.e. $x$ of the primal problem:
\begin{equation}
    \min_{\pi \in \pigor} J(\pi) = \min_{\pi \in \Pi(\mu,\nu)} \left\{ J(\pi) + \int_{X\times Y} \al(x) \cdot g(x,y) \dd \pi(x,y)\right\}.
\end{equation}
(ii) Let $\pi\in \pigor$, then $\pi$ solves \eqref{pbwot} if and only if:
\begin{itemize}
\item $\pi$ is concentrated on the set $\{(x,y) \in X \times Y : \Lambda \xi (x,y) = 0\}$
\item for $\mu$ a.e. $x$, we have 
\begin{equation}\label{focthetabeta}
\lambda(x)\in \partial \theta (\int_Y f(x,y) \dd \pi_x(y))\; \mbox{and } c_x(\pi_x)+ c_x^*(\beta_x)=\int_Y \beta_x \dd \pi_x.
\end{equation}
\end{itemize}
\end{theorem}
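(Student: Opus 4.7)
My plan is to deduce everything from the strong duality identity $\min_{\pi\in\pigor} J(\pi) = -F(\xi)$ (Corollary \ref{relaxcoro}, Theorem \ref{existoptpot}) together with the Young-inequality chain already used in the weak duality argument preceding that corollary. The key observation is that the same chain of inequalities works for arbitrary $\pi \in \Pi(\mu,\nu)$, not only for $\pi \in \pigor$, provided we keep the term $\int (\alpha\odot g)\,\dd\pi$ in the bookkeeping.

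For part (i), I would start by writing, for arbitrary $\pi \in \Pi(\mu,\nu)$ and for $\xi=(\phi,\psi,\lambda,\alpha,\beta)$ a dual optimizer, the two Young inequalities
\[c_x^*(\beta_x)+c_x(\pi_x) \geq \int_Y \beta_x \dd\pi_x, \qquad \theta^*(\lambda(x))+\theta\Big(\int_Y f(x,y)\dd\pi_x(y)\Big) \geq \lambda(x)\cdot\int_Y f(x,y)\dd\pi_x(y),\]
integrate in $\mu$, use the marginal identity $\int(\phi\oplus\psi)\dd\pi=\int\phi\dd\mu+\int\psi\dd\nu$ and the definition of $\Lambda\xi$ to obtain
\[F(\xi)+J(\pi) \geq \int_{X\times Y} \Lambda\xi\,\dd\pi - \int_{X\times Y}(\alpha\odot g)\,\dd\pi \geq -\int_{X\times Y}(\alpha\odot g)\,\dd\pi,\]
since $\Lambda\xi\geq 0$ $\mu$-a.e.\ and $\pi\geq 0$. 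Rewriting this and using $-F(\xi)=\min_{\pi'\in\pigor} J(\pi')$ yields $J(\pi)+\int(\alpha\odot g)\dd\pi \geq \min_{\pi'\in\pigor}J(\pi')$ for every $\pi\in\Pi(\mu,\nu)$. Since the added term vanishes on $\pigor$ (because $\int_Y g(x,y)\dd\pi_x(y)=0$ $\mu$-a.e.), taking the infimum over $\pi\in\Pi(\mu,\nu)$ then over $\pi\in\pigor$ gives both inequalities, proving (i) (attainment of the outer minimum follows from the optimal primal $\pi$ itself).

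For part (ii), I would retrace the same chain of inequalities. If $\pi\in\pigor$ solves \eqref{pbwot}, then $J(\pi)=-F(\xi)$ and $\int(\alpha\odot g)\dd\pi=0$, so every inequality above must be an equality. The equality $\int\Lambda\xi\dd\pi=0$ combined with $\Lambda\xi\geq 0$ and $\pi\geq 0$ forces $\pi$ to be concentrated on $\{\Lambda\xi=0\}$. Equality $\mu$-a.e.\ in the $\theta$-Young inequality is the Fenchel characterization $\lambda(x)\in\partial\theta\bigl(\int_Y f(x,y)\dd\pi_x(y)\bigr)$, and equality $\mu$-a.e.\ in the $c$-Young inequality is exactly the second condition in \eqref{focthetabeta}. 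Conversely, if $\pi\in\pigor$ satisfies the support condition and \eqref{focthetabeta}, each step in the chain becomes an equality and one reads off $J(\pi)=-F(\xi)=\min_{\pi'\in\pigor}J(\pi')$, so $\pi$ is optimal.

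There is no real obstacle here; the proof is essentially a bookkeeping exercise based on the Fenchel-Rockafellar duality already established and the characterization of cases of equality in Young's inequality. The only minor care needed is to keep the hard-constraint term $\int(\alpha\odot g)\dd\pi$ in the weak-duality estimate when extending it from $\pigor$ to $\Pi(\mu,\nu)$, and to verify that $\int\Lambda\xi\dd\pi$ vanishes only when $\pi$ is supported on the zero set of the nonnegative $\Lambda\xi$.
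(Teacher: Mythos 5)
Your proposal is correct and follows essentially the same route as the paper: extend the weak-duality chain \eqref{dualineq} from $\pigor$ to all of $\Pi(\mu,\nu)$ by retaining the $\int(\alpha\odot g)\,\dd\pi$ term, invoke strong duality $-F(\xi)=\min_{\pigor}J$, and read off (ii) as the vanishing of the nonnegative defects in Young's inequalities together with $\int\Lambda\xi\,\dd\pi=0$. The paper merely packages the same bookkeeping as an explicit identity $F(\xi)=A_1+A_2+\int\Lambda\xi\,\dd\pi-J(\pi)-\int(\alpha\odot g)\,\dd\pi$ with $A_1,A_2\geq 0$ the two Young defects, which is equivalent to your chain of inequalities.
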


\begin{proof}
(i) Let $\pi \in \Pi(\mu,\nu)$. Proceeding as in \eqref{dualineq} but keeping the term $\int_{X\times Y} \al \odot g \dd \pi$ we have
\begin{equation}\label{diffpdualA}
     F(\xi) = A_1+A_2+ \int_{X\times Y} \La \xi \dd \pi - J(\pi) - \int_{X\times Y} \al \odot g \dd \pi 
\end{equation}
where $A_1$ and $A_2$ are the (non-negative) quantities given by
\begin{align*}
    A_1&=\int_X \Big( c_x(\pi_x)+ c_x^*(\beta_x)-\int_Y \beta_x \dd \pi_x \Big) \dd \mu(x)\\
    A_2&=\int_X \Big( \theta (\int_Y f_x \dd \pi_x)+\theta^*(\lambda(x))-\lambda (x)\cdot  \int_Y f_x \dd \pi_x \Big) \dd \mu(x)
\end{align*}

So we have
\[ -F(\xi) \leq \inf_{\pi \in \Pi(\mu,\nu)} \left\{ J(\pi) + \int_{X\times Y} \al \odot g \dd \pi \right\} \leq \min_{\pi \in \pigor} J(\pi).\] 
where the second inequality is simply a direct consequence of the definition of $\pigor$. But by  \eqref{equalpdvalues} and optimality of $\xi$, the left hand side and the right hand side coincide.

(ii) Let $\pi \in \pigor$, by Corollary \ref{relaxcoro}, we know that since $\xi$ solves \eqref{dualpbwotrel}, $\pi$ solves \eqref{pbwot} if and only if  $F(\xi) + J(\pi) = 0$. Thanks to \eqref{diffpdualA} and the fact that $\int_{X\times Y}  \alpha \odot g \dd \pi=0$ this amounts to the requirement that the three non-negative terms $ \int_{X\times Y} \La \xi \dd \pi$, $A_1$ and $A_2$ vanish i.e. respectively the fact that $\pi$ concentrates on  $\{ \La \xi =0\}$ and \eqref{focthetabeta}. 
\end{proof}

In the entropic case, we can characterize the optimal plan as follows:

\begin{theorem}\label{theorem:opticondeps}
Assume that $Y$ is a compact subset of $\R^d$ and \textbf{(H1)-(H2)-(H3)-(H4)-(H5)}, and denote by $\xi = (\phi,\psi,\al,\be,\la)$ a solution of \eqref{dualpbwotreleps}.\\
(i) Then $\al$ is a Lagrange multiplier for the constraint $\int_Y g(x,y) \dd \pi_x(y) = 0$ for $\mu$-a.e. $x$ of the primal problem:
\begin{equation}
    \min_{\pi \in \pigor} J_\ep(\pi) = \min_{\pi \in \Pi(\mu,\nu)} \left\{J_\ep(\pi) + \int_{X\times Y}  \al(x) \cdot g(x,y) \dd \pi(x,y)\right\}.
\end{equation}
(ii) Let $\pi\in \pigor$, then $\pi$ solves \eqref{pbwoteps} if and only if $\pi$ is absolutely continuous with respect to $\mu \otimes \nu$, with density given by $\exp\left( -\La\xi/\ep\right)$
and \eqref{focthetabeta} holds.
\end{theorem}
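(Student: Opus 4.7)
The plan is to mirror the proof of Theorem \ref{theorem:opticond}, substituting the Fenchel--Young inequality associated to the entropic penalty $G_\eps$ for the role played there by the pointwise constraint $\Lambda\xi \geq 0$. Fix $\pi \in \Pi(\mu,\nu)$. The identity \eqref{diffpdualA}, which holds for any $\xi \in E_{\Relax}$ and any $\pi \in \Pi(\mu,\nu)$ irrespective of the sign of $\Lambda \xi$, reads
\[F(\xi)= A_1 + A_2 + \int_{X\times Y} \Lambda\xi \dd\pi - J(\pi) - \int_{X\times Y} \alpha \odot g\dd\pi,\]
where $A_1, A_2 \geq 0$ are the Fenchel gaps associated to $(c_x,c_x^*)$ and $(\theta,\theta^*)$ respectively. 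Combining \eqref{ababayrou} with the Fenchel--Young inequality $G_\eps(\Lambda\xi) + G_\eps^*(-\pi) \geq -\int_{X\times Y}\Lambda\xi \dd\pi$ yields
\[G_\eps(\Lambda\xi) \geq -\int_{X\times Y} \Lambda\xi\dd\pi - \eps H(\pi\mid\mu\otimes\nu) + \eps,\]
with equality if and only if the density of $\pi$ with respect to $\mu\otimes\nu$ equals $\exp(-\Lambda\xi/\eps)$. Combining the two displays gives the key chain
\[-F(\xi)-G_\eps(\Lambda\xi)+\eps \leq -A_1 - A_2 + J_\eps(\pi) + \int_{X\times Y}\alpha\odot g\dd\pi \leq J_\eps(\pi) + \int_{X\times Y}\alpha\odot g\dd\pi.\]

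For assertion (i), I would invoke \eqref{equalpdvaluesepsrel} and the optimality of $\xi$: the left-hand side equals $\min_{\pi\in\pigor} J_\eps(\pi)$. Taking the infimum over $\pi \in \Pi(\mu,\nu)$ on the right and observing that for $\pi\in\pigor$ the term $\int\alpha\odot g\dd\pi$ vanishes yields
\[\min_{\pi\in\pigor} J_\eps(\pi) \leq \inf_{\pi\in\Pi(\mu,\nu)}\Big\{J_\eps(\pi)+\int_{X\times Y}\alpha\odot g\dd\pi\Big\} \leq \min_{\pi\in\pigor}J_\eps(\pi),\]
which forces equality throughout and proves (i).

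For (ii), let $\pi \in \pigor$. Then $\int\alpha\odot g\dd\pi = 0$, so $\pi$ solves \eqref{pbwoteps} if and only if equality holds in every step of the chain above, which requires simultaneously $A_1 = 0$, $A_2 = 0$, and equality in the Young inequality for $G_\eps$. The vanishing of $A_1$ and $A_2$ translates exactly to \eqref{focthetabeta}, while the equality case for $G_\eps$ is precisely the statement that $\pi \ll \mu \otimes \nu$ with density $\exp(-\Lambda\xi/\eps)$.

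I do not expect any serious obstacle, as the structure is dictated by Theorem \ref{theorem:opticond} and the explicit formula for $G_\eps^*$. The only routine care needed is that all the integrals involved are well-defined, which follows from the finiteness of the primal value in assumption \textbf{(H2)} together with the $L^\infty$ bounds on $\phi, \psi, \lambda, \alpha, \beta$ granted by Theorem \ref{existoptpoteps}; it is worth noting that since $\xi \in E_{\Relax}$ then $\Lambda\xi$ belongs to $L^\infty((X,\mu), C(Y))$ so that $\int\Lambda\xi \dd\pi$ is unambiguously defined for any $\pi\in\Pi(\mu,\nu)$.
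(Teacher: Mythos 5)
Your proof is correct and follows essentially the same route as the paper: both start from the identity \eqref{diffpdualA}, add the Fenchel--Young gap for $G_\eps$ (the paper names this gap $A_3$; you keep it as an inequality), invoke \eqref{equalpdvaluesepsrel} with the optimality of $\xi$ to squeeze the chain in (i), and read (ii) off from the equality cases $A_1=A_2=0$ and saturation of the $G_\eps$--Young inequality. The only cosmetic difference is that the paper writes the three nonnegative gaps as a single identity, whereas you carry the $G_\eps$ gap as an inequality; the content is identical.
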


\begin{proof}
 Let $\pi \in \Pi(\mu,\nu)$, from \eqref{diffpdualA} and \eqref{ababayrou} we deduce that 
 \[J_\eps(\pi)+ F(\xi) + G_\ep(\La \xi)-\eps +\int_{X\times Y} \alpha \odot g \dd \pi= A_1+A_2+ A_3\]
where $A_1$ and $A_2$ are the same non-negative quantities as in the previous proof and 
\[A_3= G_\eps(\La \xi)+ G_\eps^*(-\pi)+ \int_{X\times Y} \Lambda \xi \dd \pi\]
which is non-negative by Young's inequality. The proof of (i) is then exactly as in the previous theorem. As for (ii) we note that $\pi\in \pigor$ solves \eqref{pbwoteps} if and only if $J_\eps(\pi)+ F(\xi) + G_\ep(\La \xi)-\eps=0$ i.e. when $A_1$, $A_2$ and $A_3$ vanish. It is well-known (see \cite{Varadhan}) that $A_3=0$ means that $\pi$ is absolutely continuous with respect to $\mu \otimes \nu$, with density given by $\exp\left( -\La\xi/\ep\right)$. The condition $A_1=A_2=0$ is equivalent to \eqref{focthetabeta} as before. 
\end{proof}



\section{Convergence of the primal problem}\label{sec-convergence}

We now study the asymptotic behaviour, as the entropic parameter $\ep \downarrow 0$, of the regularized weak optimal transport problem introduced in \eqref{pbwoteps}. Our goals are twofold: (i) to justify that minimizers of the entropically regularized problems converge to minimizers of the original constrained weak
transport problem, and (ii) to quantify the rate at which the regularized optimal values approach the unregularized one.  Besides the purely theoretical interest, these results validate the use of Sinkhorn–type algorithms—whose efficiency is a prime motivation for adding the entropy term—in practical implementations.

In many applications, the hard moment constraint $\int g \, \dd \pi_x = 0$ is itself approximated by a soft penalization.  We therefore treat simultaneously (a) the “pure” entropic regularization and (b) a mixed penalization where the constraint is replaced by a penalty $\ze^{-1}\int \tilde\theta(\int g \,\dd\pi_x)\,\mu(\dd x)$.  A key point will be to
balance the decay of $\ep$ and $\ze$; we identify the mild condition $\ep \ln(\ze)\to 0$ ensuring that both approximations recover the same $\Gamma$-limit and that the optimal values converge with explicit logarithmic rates.

\subsection{Gamma-convergence}

We first record the $\Gamma$-convergence of the functional $J_\ep$
towards $J$. See problems \eqref{pbwoteps} and \eqref{pbwot} for the definitions of $J_\ep$ and $J$.

\begin{proposition}
\label{prop:gamma-simple}
If $Y$ is a compact subset of $\R^d$ and \textbf{(H1)-(H2)-(H3)-(H4)-(H5)} hold, then $J_\ep$, the entropic functional, $\Gamma$-converges 
to $J$ on $\pigor$ endowed with the weak-$\ast$ topology.
\end{proposition}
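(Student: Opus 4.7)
I split the argument into the liminf and recovery-sequence halves of $\Gamma$-convergence.

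For the liminf inequality, my strategy is to observe that the unpenalized functional $J$ is already weak-$\ast$ lower semicontinuous on $\Pi(\mu,\nu)$. Indeed, Lemma~\ref{dualjeanpierre} writes $\int_X c(x,\pi_x)\,\dd\mu(x)$ as the supremum over $\beta\in C(X,K)$ of the weak-$\ast$ continuous affine maps $\pi\mapsto \int \beta\,\dd\pi-\int c_x^*(\beta_x)\,\dd\mu$, and Lemma~\ref{dualthetarep} provides a similar representation of the soft-moment term as a supremum over a uniformly bounded family in $C(X,\R^M)$. Being a supremum of weak-$\ast$ continuous affine functionals, $J$ is thus weak-$\ast$ lsc. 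Since $\eps H(\cdot\mid \mu\otimes\nu)\geq 0$, any sequence $\pi^\eps\to \pi$ in the weak-$\ast$ topology satisfies $\liminf_\eps J_\eps(\pi^\eps)\geq \liminf_\eps J(\pi^\eps)\geq J(\pi)$.

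For the recovery sequence at $\pi\in \pigor$ with $J(\pi)<+\infty$ (the other case being vacuous), the plain choice $\pi^\eps=\pi$ fails since $H(\pi\mid \mu\otimes\nu)$ may be infinite; I therefore combine the $\delta$-sliced approximation of Definition~\ref{def:sliced_approx} with the finite-entropy Slater corrector given by Lemma~\ref{lem:slater-Hfinite}. By Lemma~\ref{lem:sliced-entropy}, the sliced approximation $\pi^\delta\in \Pi(\mu,\nu)$ satisfies $W_\infty(\pi^\delta_x,\pi_x)\leq \sqrt d\,\delta$ for $\mu$-a.e.\ $x$ and $H(\pi^\delta\mid \mu\otimes\nu)\leq -d\log\delta + C$. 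Its conditional moment residual $r^\delta(x):=\int_Y g(x,y)\,\dd\pi^\delta_x(y)$ is a priori nonzero, but the uniform continuity of $g$ together with $\pi\in \pigor$ and assumption \textbf{(H4)} yield $\|r^\delta\|_\infty\leq \omega_g(\sqrt d\,\delta)$ and $\int_X r^\delta\,\dd\mu=0$. Fixing once and for all $\delta_0$ with $\omega_g(\sqrt d\,\delta_0)\leq \eta/2$ and setting $\alpha_\delta:=2\omega_g(\sqrt d\,\delta)/\eta$, Lemma~\ref{lem:slater-Hfinite} applied at $\delta_0$ to the vector field $-\tfrac{1-\alpha_\delta}{\alpha_\delta}r^\delta$ (whose $L^\infty$-norm is at most $\eta/2$ for $\delta$ small) furnishes $\sigma^\delta\in \Pi(\mu,\nu)$ with $\int_Y g\,\dd\sigma^\delta_x = -\tfrac{1-\alpha_\delta}{\alpha_\delta}r^\delta(x)$ and entropy bounded by a constant $H_0$ independent of $\delta$. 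The convex combination $\tilde\pi^\delta:=(1-\alpha_\delta)\pi^\delta+\alpha_\delta\sigma^\delta$ then lies in $\pigor$ by construction.

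To conclude, I tune $\delta=\delta(\eps)$ so that $\delta(\eps)\to 0$ and $\eps\log(1/\delta(\eps))\to 0$ (e.g.\ $\delta(\eps)=e^{-1/\sqrt\eps}$) and set $\pi^\eps:=\tilde\pi^{\delta(\eps)}$. Convexity of $H$ and the entropy bounds above give $\eps H(\pi^\eps\mid \mu\otimes\nu)\leq \eps\bigl((1-\alpha_{\delta(\eps)})(-d\log\delta(\eps)+C)+\alpha_{\delta(\eps)} H_0\bigr)\to 0$. On the other hand $W_1(\tilde\pi^\delta_x,\pi_x)\leq \sqrt d\,\delta + \alpha_\delta\,\mathrm{diam}(Y)\to 0$ uniformly in $x$, so the common modulus of continuity $\omega_K$ of $K$ from assumption \textbf{(H3)}, combined with the dual representation of $c$, yields $c(x,\tilde\pi^\delta_x)\to c(x,\pi_x)$ uniformly in $x$; an analogous argument using $\omega_f$ and the continuity of $\theta$ on bounded sets handles the soft-moment term. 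Hence $J(\tilde\pi^\delta)\to J(\pi)$ and $\limsup_\eps J_\eps(\pi^\eps)\leq J(\pi)$, which is the recovery sequence estimate. The main obstacle is precisely the compatibility step: the sliced approximation secures entropy control but destroys the hard moment constraint, whereas the bare qualification of Definition~\ref{def:slater} delivers exact correctors with no entropy bound. Lemma~\ref{lem:slater-Hfinite} bridges the two, and the choice $\alpha_\delta\asymp \omega_g(\sqrt d\,\delta)$ is what makes both ingredients fit together without blowing up the entropy budget.
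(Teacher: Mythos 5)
Your proof is correct and follows essentially the same route as the paper: build the recovery sequence by taking the $\delta$-sliced approximation of $\pi$, measure the moment residual $r^\delta$, and cancel it with a small convex combination (weight $\asymp \omega_g(\sqrt d\,\delta)/\eta$) against a finite-entropy corrector supplied by Lemma~\ref{lem:slater-Hfinite}, then tune $\delta(\eps)$ so that $\eps\log(1/\delta)\to 0$. The only cosmetic differences are your choice $\delta(\eps)=e^{-1/\sqrt\eps}$ versus the paper's $\delta=\eps$ (both satisfy the required $\eps\log(1/\delta)\to 0$), and your more explicit justification of the $\Gamma$-liminf via the supremum representations of Lemmas~\ref{dualjeanpierre} and~\ref{dualthetarep}.
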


\begin{proof}
\emph{Preliminary remark} Let $\eta>0$ be as in assumption \textbf{(H5)}. Since assumption \textbf{(H4)}
also holds, by Lemma \ref{lem:slater-Hfinite} there exist a constant $C>0$ ($C = -d\ln(\de)+d\ln\left(\mathrm{diam}_\infty(Y)+2\right) $ as in Lemma \ref{lem:slater-Hfinite}) such that $\| r \|_\infty \leq \frac{\eta}{2}$ implies that $r(x)$ can be represented as $\int g(x,y )\dd \ga_x(y)$ for some $\ga \in \Pi(\mu,\nu)$ with $H(\ga \mid \mu \otimes \nu) \leq C$.

Let $\pi \in \pigor$.
We construct a sequence $(\pi^{(\ep)})_{\ep>0} \in \pigor$ such that $J_\ep(\pi^{(\ep)}) \to J(\pi)$.
Fix $\ep>0$ and let $\pi^\de$ be the $\de$-sliced approximation of $\pi$, with a parameter $\de=\de(\ep)$ to be specified below.

Let $\omega_g$ be the modulus of continuity of $g$ and set $\ta := \frac{2\omega_g(\sqrt{d}\de)}{\eta}$. Consider $\de$ small enough so that $0 < \tau < 1$ and define
\[
r(x) := \frac{\ta-1}{\ta}\int_Y g(x,y)\,\dd \pi_x^\de(y).
\]
Thanks to the properties of the sliced-approximation stated in Lemma \ref{lem:sliced-entropy}, $W_\infty(\pi_x,\pi_x^\delta) \leq \sqrt{d}\delta$ for $\mu$-a.e. $x$, and so we have $\bigl|\int g(x,y)\,\pi^\de(\dd y\mid x)\bigr|\le \omega_g(\sqrt{d}\de)$ for $\mu$-a.e.\ $x$.
Hence by combination with the definition of $\ta$, we have $\|r\|_\infty < \frac{\eta}{2}$, and as seen in the preliminary remark, Lemma~\ref{lem:slater-Hfinite} yields $\gamma\in \Pi(\mu,\nu)$ with $H(\gamma\mid\mu\otimes\nu)<C$ and
$\int g(x,y)\,\dd \gamma_x(y)=r(x)$.
By definition of $r$, this can be expressed as
\[
(1-\ta)\int_Y g(x,y)\,\dd\pi_x^\de(y)
+\ta \int_Y g(x,y)\,\dd \gamma_x(y) = 0.
\]
We set $\pi^{(\ep)} := (1-\ta)\pi^\de + \ta\gamma$.
Then $\int g(x,y)\,\dd \pi_x^{(\ep)}(y)=0$, and by convexity of $\pigor$ we have $\pi^{(\ep)}\in\pigor$.
Moreover, by convexity of the relative entropy and of the squared Wasserstein distance,
\begin{align}
H(\pi^{(\ep)}\mid\mu\otimes\nu) &\le
     (1-\ta)H(\pi^\de\mid\mu\otimes\nu)+\ta H(\gamma\mid\mu\otimes\nu)
     \label{eq:H-conv}\\
W_2^2(\pi^{(\ep)},\pi) &\le
     (1-\ta)W_2^2(\pi^\de,\pi)+\ta W_2^2(\gamma,\pi)
     \label{eq:W2-conv}
\end{align}
We now choose $\de=\ep$. Then $\de$ and $\ta=2\omega(\sqrt{d}\ep)/\eta$ tend to $0$ as $\ep\downarrow0$.
Since $\gamma$ and $\pi$ share the marginals $\mu$ and $\nu$, their $W_2$-distance is bounded in terms of second moments:
$W_2^2(\gamma,\pi) \le 4 m_2(\mu)+4 m_2(\nu)$.
By inequality \eqref{eq:W2-conv} we thus obtain $W_2(\pi^{(\ep)},\pi)\to0$.
Furthermore, $H(\gamma\mid\mu\otimes\nu)\le C$ and $H(\pi^\de\mid\mu\otimes\nu)\le d\ln(1/\de)$, so inequality \eqref{eq:H-conv} implies
$(0 \leq) \, \ep\,H(\pi^{(\ep)}\mid\mu\otimes\nu) \leq d\ep\ln(1/\ep) + C \ep$ so by comparison, $\ep\,H(\pi^{(\ep)}\mid\mu\otimes\nu)$ tends also to 0.
By continuity of $f$ and $\theta$ (assumption \textbf{(H1)}) and compactness of $\supp\mu$,
\[
\int_X \theta\Bigl(\!\int_Y f\,\dd\pi^{(\ep)}_{x}\Bigr)\,\dd\mu(x)
\longrightarrow
\int_X \theta\Bigl(\!\int_Y f\,\dd\pi_x\Bigr)\,\dd\mu(x).
\]
Finally, by combination of assumptions \textbf{(H2)-(H3)}, we have for any $x\in\supp(\mu)$
\[
|c_x(\pi) - c_x(\pi^{(\ep)}) | \leq \omega_K(W_1(\pi,\pi^{(\ep)})) \leq \omega_K(W_2(\pi,\pi^{(\ep)}))
\] 
with $\omega_K$ a concave common modulus of continuity for the functions of the compact $K$ of assumption \textbf{(H3)}. So  by integration and comparison,
\[ \bigl|\int_X c_x(\pi^{(\ep)}_x)\,\dd\mu(x)-\int_X c_x(\pi_x)\,\dd\mu(x)\bigr|\to0. \]
Hence $J_\ep(\pi^{(\ep)})$ converges to $J(\pi)$ as $\ep$ tends to $0$, proving the $\Gamma$-limsup inequality.

The $\Gamma$-liminf inequality follows from the lower semicontinuity of each term and the fact that
$\ep H(\cdot\mid\mu\otimes\nu)\ge0$.
Hence $J_\ep \xrightarrow{\Gamma} J$ on $\pigor$ endowed with the weak-$\ast$ topology.
\end{proof}

In computational practice the moment constraint $\int g \dd \pi_x = 0$ is rarely enforced exactly;
instead one adds a penalty capturing small violations, as for the soft constraint $\int_Y f \dd \pi_x = 0$.
It leads to the following problem:

\begin{equation}
\label{eq:WOT-ep-ze}
    \min_{\pi \in \Pi(\mu,\nu)} J_{\ep,\ze}(\pi) := J_\ep(\pi) 
    + \frac{1}{\ze}\int_X \tth(\int_Y g \dd \pi_x) \dd \mu(x).
\end{equation}
We now show that the penalized problems still approximate the initial problem \eqref{pbwot} provided the penalty ($\ze$) and entropy ($\ep$) parameters vanish in a compatible manner that holds e.g. if $\ze = \ep$.

\begin{proposition}
\label{prop:gamma-mixte}
Consider the functionals $J_{\ep,\ze}$ defined for problem \eqref{eq:WOT-ep-ze} and $J$ is defined in problem \eqref{pbwot}.
Suppose that assumptions \textbf{(H1)-(H2)-(H3)} hold and that $Y$ is a compact subset of $\R^d$. Suppose also that $\tth$ is a convex function defined on $\R^N$ such that $\tth(x)=0$ if and only if $x=0$ and assume that the function $g$ is Hölder continuous. Then, when $\ep,\ze \to 0$ and under the compatibility condition $\ep |\ln(\ze)| \to 0$, the functional $J_{\ep,\ze}$  $\Gamma$-converges to the functional $\pi \mapsto \left\{\begin{array}{cc}
    J(\pi) & \text{ if } \pi \in \pigor   \\
    + \infty & \text{ else. } \end{array}\right.$ on $\Pi(\mu,\nu)$ endowed with the weak-$\ast$ topology.
\end{proposition}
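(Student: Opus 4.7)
The plan is to split the $\Gamma$-convergence into the two usual inequalities. The $\Gamma$-liminf part relies only on positivity of $\tth$ away from the origin together with weak-$\ast$ lower semicontinuity of $J$, whereas the $\Gamma$-limsup part is where both the Hölder regularity of $g$ and the compatibility condition $\ep |\ln(\ze)| \to 0$ play a role. Since hypothesis \textbf{(H5)} is no longer assumed, one cannot correct a sliced approximation into an exact element of $\pigor$ as was done in Proposition \ref{prop:gamma-simple}; instead, the penalty contribution $\frac{1}{\ze}\int \tth(\int g \dd\pi^\de_x)\dd\mu$ must be controlled directly, by combining the Hölder estimate on $\int g \dd\pi^\de_x$ with the local Lipschitz regularity of $\tth$ near the origin.

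For the $\Gamma$-limsup, fix $\pi \in \pigor$ with $J(\pi) < +\infty$ (the bound is trivial otherwise) and take as recovery sequence the $\de$-sliced approximation $\pi^\de$ from Definition \ref{def:sliced_approx}, with $\de$ a function of $(\ep,\ze)$ to be chosen. Lemma \ref{lem:sliced-entropy} gives $\pi^\de \in \Pi(\mu,\nu)$, $W_\infty(\pi^\de_x,\pi_x) \leq \sqrt{d}\de$ for $\mu$-a.e.\ $x$, and $H(\pi^\de\mid \mu\otimes\nu) \leq -d\ln\de + O(1)$. Denote by $h\in(0,1]$ the Hölder exponent of $g$; then since $\pi \in \pigor$,
\begin{equation*}
    \left| \int_Y g(x,y) \dd\pi^\de_x(y) \right| = \left| \int_Y g \dd \pi^\de_x - \int_Y g \dd \pi_x \right| \leq \omega_g(\sqrt{d}\,\de) \leq C\de^h \quad \text{for $\mu$-a.e.\ } x.
\end{equation*}
The convex function $\tth$ is locally Lipschitz at $0$ with $\tth(0)=0$, so the penalty is at most $LC\de^h/\ze$. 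Choosing $\de := \ze^{2/h}$ yields $\de^h/\ze = \ze \to 0$ and $\ep |\ln\de| = (2/h)\ep |\ln\ze| \to 0$ by the compatibility assumption, so simultaneously the penalty and $\ep H(\pi^\de\mid\mu\otimes\nu)$ both vanish. Convergence $J(\pi^\de) \to J(\pi)$ follows by dominated convergence, using weak-$\ast$ continuity of $c(x,\cdot)$ (Lemma \ref{cstarisnormal}), the $L^1(\mu)$ domination $c(x,p) \leq 2\sup_{\gamma\in K}\|\gamma\|_\infty + c(x,\pi_x)$ (a consequence of \textbf{(H3)} together with $|c^*(x,0)| \leq c(x,\pi_x)$), and continuity of $\theta$.

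For the $\Gamma$-liminf, let $\pi_n \to \pi$ weak-$\ast$ and assume $\liminf J_{\ep_n,\ze_n}(\pi_n) = M < +\infty$; pass to a subsequence with $J_{\ep_n,\ze_n}(\pi_n) \leq M+1$. All four contributions being non-negative gives $\int_X \tth(r_n)\dd\mu \leq (M+1)\ze_n \to 0$ where $r_n(x) := \int_Y g \dd\pi_{n,x}$. Boundedness of $g$ confines $r_n$ to the fixed compact ball $\bar B(0,\|g\|_\infty)\subset \R^N$; continuity of $\tth$ and $\tth>0$ off the origin imply $\ta(\rho) := \min\{\tth(r): \rho \leq |r|\leq \|g\|_\infty\}>0$ for every $\rho>0$, so Chebyshev yields $\mu(\{|r_n|\geq \rho\}) \leq (M+1)\ze_n/\ta(\rho) \to 0$ and hence $r_n \to 0$ in $\mu$-measure, and by bounded convergence in $L^1(\mu,\R^N)$. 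For any $\al\in C(X,\R^N)$,
\begin{equation*}
    \int_{X\times Y} \al(x)\cdot g(x,y)\dd\pi_n(x,y) = \int_X \al\cdot r_n \dd\mu \to 0,
\end{equation*}
while weak-$\ast$ convergence of $\pi_n$ forces the same integral to tend to $\int \al\cdot g\dd\pi$; thus $\int \al\cdot g \dd\pi = 0$ for every $\al \in C(X,\R^N)$, which means exactly $\pi \in \pigor$. Finally, the dual representations of Lemmas \ref{dualjeanpierre} and \ref{dualthetarep} express $J$ as a supremum of weak-$\ast$ continuous affine maps on $\Pi(\mu,\nu)$, hence $J$ is weak-$\ast$ l.s.c.; combined with positivity of the entropy and penalty terms, $J(\pi) \leq \liminf J(\pi_n) \leq \liminf J_{\ep_n,\ze_n}(\pi_n)$.

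The main obstacle is the simultaneous calibration of the scale $\de$ in the limsup step: the penalty enforces a polynomial decay $\de^h = o(\ze)$, whereas the entropy allows only the much slower $\ep|\ln\de|\to 0$. The Hölder regularity of $g$ is what converts the first requirement into a power-law constraint, and the compatibility $\ep|\ln\ze|\to 0$ is precisely the condition ensuring that some admissible $\de$ meets both demands.
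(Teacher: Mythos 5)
Your proof is correct and follows essentially the same route as the paper: the $\Gamma$-limsup uses the $\delta$-sliced approximation with the identical calibration $\delta = \ze^{2/h}$, balancing the Hölder-driven penalty bound $\delta^{h}/\ze \to 0$ against $\ep|\ln\delta|\to 0$ via the compatibility condition, and the $\Gamma$-liminf uses nonnegativity of the entropy and penalty terms together with weak-$\ast$ lower semicontinuity of $J$. The only minor difference is in ruling out $\pi\notin\pigor$: you argue via Chebyshev that $r_n := \int g\,\dd\pi_{n,x}\to 0$ in $\mu$-measure and hence in $L^1(\mu)$, while the paper more tersely appeals to weak-$\ast$ lower semicontinuity of the penalty integral $\pi\mapsto\int_X\tth(\int_Y g\,\dd\pi_x)\,\dd\mu$ — both are valid and your version is arguably the more explicit one.
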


\begin{proof}
The argument mirrors Proposition~\ref{prop:gamma-simple}, except that the $\delta$-sliced approximation itself serves as a competitor. Let
$\pi \in \pigor$. Fix $\ep,\ze>0$ and let $\pi^\delta$
be the $\delta$-sliced approximation of $\pi$, for a parameter $\delta=\delta(\ep,\ze)>0$ to be chosen later.
By property of the sliced-approximation stated in Lemma \ref{lem:sliced-entropy}, $W_\infty(\pi_x,\pi_x^\delta) \leq \sqrt{d}\delta$ for $\mu$-a.e. $x$. Hence, by continuity of f and $\theta$ (assumption \textbf{(H1)}),
\[
\int_X \theta\!\Bigl(\!\int_Y f\,\dd\pi^\delta_x\Bigr)\,\dd\mu(x)\;\longrightarrow\;
\int_X \theta\!\Bigl(\!\int_Y f\,\dd\pi_x\Bigr)\,\dd\mu(x)\qquad\text{as }\delta\to0.
\]
With the same method as in the previous proof, assumptions \textbf{(H2)-(H3)} grant
\[
\int_X c_x(\pi^\delta_x)\,\dd\mu(x)\;\longrightarrow\;\int_X c_x(\pi_x)\,\dd\mu(x)\qquad\text{as }\delta\to0.
\]

For the entropy term, by property of the sliced-approximation stated in Lemma \ref{lem:sliced-entropy}, we have $\ep H(\pi^\delta\mid\mu\otimes\nu)\le d\,\ep|\ln\delta|$, so
$\ep H(\pi^\delta\mid\mu\otimes\nu)\to0$ provided $\ep|\ln\delta|\to0$.

Concerning the penalization term, since $g$ is Hölder continuous and $\tilde\theta$ is (locally) Lipschitz
(as argued earlier for $\theta$), there exist positive constants $C_1,a>0$ such that, whenever $\de$ is small enough, for $\mu$-a.e. $x$,
\[
 \tilde\theta\!\Bigl(\!\int_Y g\,\dd\pi^\delta_x\Bigr)\,
\;\le\; C_1\,W_\infty(\pi_x^\delta,\pi_x)^{a}\;\le\; C_1 (\sqrt{d}\,\delta)^{a},
\]
where the last inequality holds from Lemma \ref{lem:sliced-entropy}. Thus $
\frac{1}{\ze}\int_X \tilde\theta\!\left(\!\int_Y g\,\dd\pi^\delta_x\right)\,\dd\mu(x)\to0$ provided $\delta^{a}/\ze\to0$. Choose $\delta := \ze^{2/a}$. Then $\delta^{a}/\ze = \ze \to 0$, and
$\ep|\ln\delta| = \frac{2}{a}\,\ep\,|\ln\ze|\,\to\,0$
by compatibility condition. Therefore both the entropy term and the penalization term converge to 0, so $\pi^\delta$ is a recovery sequence, proving the $\Gamma$-limsup inequality.

The $\Gamma$-liminf inequality follows from lower semicontinuity of each term and the nonnegativity of
$\ep H(\cdot\mid\mu\otimes\nu)$ and $\ze^{-1}\tilde\theta(\cdot)$. So $\Gamma$-convergence holds.
\end{proof}

\subsection{Rates of convergence}
We complement the qualitative $\Gamma$-convergence with quantitative bounds on the
optimal value gaps. 
With the notation $v_{\ep,\ze}$ for the value of the problem \eqref{eq:WOT-ep-ze}, that is $v_{\ep,\ze} = \min_{\pi \in \Pi(\mu,\nu)} J_{\ep,\ze}(\pi)$, we have by monotonicity in $\ep$ and $\ze$
\begin{equation}
   v_{0,\ze} \leq v_{\ep,\ze}, v_{0,0} \leq v_{\ep,0}.
\end{equation}
By the triangle inequality,
\begin{equation}
    |v_{\ep,\ze} - v_{0,0}| \leq  |v_{0,\ze} - v_{0,0}| + |v_{\ep,0} - v_{0,0}|.
\end{equation}
Thus it suffices to estimate the two error terms $v_{0,\ze}-v_{0,0}$ and $v_{\ep,0}-v_{0,0}$
separately. This is the content of the next two propositions. In the purely entropic case,
as in classical (strong) optimal transport (see for example \cite{carlier2022convergencerategeneralentropic}), the leading order of the error is $\ep \ln(1/\ep)$.

\begin{proposition}\label{prop:rate-ep}
    Suppose that the functions $c_x$ are equi-Hölder-continuous for $W_\infty$. Further assume that $Y$ is a compact subset of $\R^d$, that assumptions \textbf{(H1)-(H2)-(H3)-(H4)-(H5)} hold and that the functions $f$, $g$ are  Hölder continuous. Then there exists a constant $C>0$ such that for $\ep>0$ small enough, we have
    \[ 0 \leq \min_{\pi \in \pigor} J_\ep(\pi)\,\, - \min_{\pi \in \pigor} J(\pi) \,\, \leq C\ep\ln\left(\frac{1}{\ep}\right).\]
\end{proposition}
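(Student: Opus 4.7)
The lower bound follows from $H(\pi\mid\mu\otimes\nu)\ge 0$. For the upper bound, let $\pi^*$ minimize $J$ on $\pigor$ (existence provided by Theorem \ref{dualityformulasforwot}), set $v_0:=J(\pi^*)$, and construct for each $\delta\in(0,1)$ a competitor $\tilde\pi_\delta\in\pigor$ with both controlled entropy and $J$-value close to $v_0$; optimizing $\delta$ in $\eps$ will then yield the claimed rate.

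The construction has two steps. First, take the $\delta$-sliced approximation $\pi^\delta$ of $\pi^*$ (Definition \ref{def:sliced_approx}); by Lemma \ref{lem:sliced-entropy} we have $\pi^\delta\in\Pi(\mu,\nu)$, $W_\infty(\pi^\delta_x,\pi^*_x)\le\sqrt{d}\,\delta$ for $\mu$-a.e.\ $x$, and $H(\pi^\delta\mid\mu\otimes\nu)\le -d\ln\delta+C$. By Hölder continuity of $g$ (with exponent $a_g$) and assumption \textbf{(H4)}, the residual $r^\delta(x):=\int_Y g(x,y)\dd\pi^\delta_x(y)$ obeys $\|r^\delta\|_\infty\le C\delta^{a_g}$ and $\int r^\delta\dd\mu=0$. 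Second, cancel this residual by a small convex combination: pick $\tau=\kappa\delta^{a_g}$ small enough that $\tfrac{1-\tau}{\tau}\|r^\delta\|_\infty\le\eta/2$ (with $\eta$ from \textbf{(H5)}), and use Lemma \ref{lem:slater-Hfinite} to produce $\gamma\in\Pi(\mu,\nu)$ satisfying $\int g(x,y)\dd\gamma_x=-\tfrac{1-\tau}{\tau}r^\delta(x)$ and, crucially, $H(\gamma\mid\mu\otimes\nu)\le C_0$ with $C_0$ independent of $\delta$ and $\eps$. Then $\tilde\pi_\delta:=(1-\tau)\pi^\delta+\tau\gamma\in\pigor$.

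Convexity of $H$ yields $\eps H(\tilde\pi_\delta\mid\mu\otimes\nu)\le C\eps\ln(1/\delta)+C\eps$. Convexity of $c_x$ and of $\theta$ (assumption \textbf{(H1)}) gives $J(\tilde\pi_\delta)\le(1-\tau)J(\pi^\delta)+\tau J(\gamma)\le J(\pi^\delta)+C\tau$, where $J(\gamma)\le C$ follows from the equi-Hölder oscillation of $c_x$ on the $W_\infty$-compact $\PP(Y)$ (together with \textbf{(H2)}) plus boundedness of $f$ and continuity of $\theta$. The equi-Hölder continuity of $c_x$ in $W_\infty$, Hölder continuity of $f$, and local Lipschitz continuity of $\theta$ in turn yield $|J(\pi^\delta)-J(\pi^*)|\le C\delta^{a_1}$ for some $a_1>0$. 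Setting $a_0:=\min(a_1,a_g)>0$, one collects
\[ J_\eps(\tilde\pi_\delta)-v_0 \le C\delta^{a_0}+C\eps\ln(1/\delta)+C\eps. \]
Choosing $\delta:=\eps^{1/a_0}$ balances $\delta^{a_0}=\eps$ and $\eps\ln(1/\delta)=(1/a_0)\eps\ln(1/\eps)$, which yields $J_\eps(\tilde\pi_\delta)-v_0\le C'\eps\ln(1/\eps)$ for $\eps$ small.

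The key subtlety is that one \emph{cannot} apply $W_\infty$-Hölder continuity of $c_x$ directly between $\tilde\pi_{\delta,x}$ and $\pi^*_x$: the convex combination $\tilde\pi_{\delta,x}=(1-\tau)\pi^\delta_x+\tau\gamma_x$ leaves a ``spike'' of mass $\tau$ that may sit at positive distance from $\mathrm{supp}(\pi^*_x)$, so $W_\infty(\tilde\pi_{\delta,x},\pi^*_x)$ does \emph{not} tend to $0$ as $\tau\downarrow 0$. The resolution is a convexity-based decoupling: $W_\infty$-Hölder regularity is used only to compare $\pi^*$ and $\pi^\delta$, where $W_\infty$ is genuinely of order $\delta$, while the mixing correction is absorbed through the linear bound $J(\tilde\pi_\delta)\le J(\pi^\delta)+\tau J(\gamma)$ coming from convexity of $c_x$ and $\theta$. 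This works precisely because Lemma \ref{lem:slater-Hfinite} delivers $\gamma$ with an $\eps,\delta$-independent entropy bound, which keeps $J(\gamma)$ bounded uniformly and allows the final balancing $\delta=\eps^{1/a_0}$.
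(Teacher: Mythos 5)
Your proof is correct, but it takes a genuinely different route from the paper's. The paper uses the \emph{dual} attainment result: by Theorem \ref{theorem:opticondeps}(i), it rewrites
\[
\min_{\pi\in\pigor}J_\ep(\pi)=\inf_{\pi\in\Pi(\mu,\nu)}\Bigl\{J_\ep(\pi)+\int_{X\times Y}\alpha^\ep(x)\cdot g(x,y)\,\dd\pi(x,y)\Bigr\},
\]
where $\alpha^\ep$ is the optimal Lagrange multiplier for the hard constraint, whose $L^\infty$-norm is bounded uniformly in $\ep$ thanks to Remark \ref{rem:norme_uniforme}. It then plugs in the $\delta$-sliced approximation $\gamma^\delta$ of an unregularized minimizer $\gamma$ \emph{directly} as a competitor over $\Pi(\mu,\nu)$: no membership in $\pigor$ is required, because the constraint violation $\int g\,\dd\gamma^\delta_x$ is of order $\delta^{b}$ (by H\"older continuity of $g$ and $\int g\,\dd\gamma_x=0$) and is absorbed by the $\alpha^\ep\cdot g$ term. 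Your argument instead stays on the primal side: you repair the sliced approximation so that it lies in $\pigor$, by mixing with a bounded-entropy plan produced by Lemma \ref{lem:slater-Hfinite}, and then you carefully exploit convexity of $c_x$, $\theta$ and $H$ (rather than $W_\infty$-continuity) to control the mixing error — you are right that $W_\infty$ does not contract under convex combinations and that the decoupling is the crux. This is precisely the mechanism used in the paper's $\Gamma$-convergence proof (Proposition \ref{prop:gamma-simple}), adapted to extract a quantitative rate. Both approaches hinge on the qualification \textbf{(H5)}: the paper through dual attainment and the uniform multiplier bound, yours through the finite-entropy Slater construction. Your route is somewhat more elementary and self-contained (it never invokes Theorem \ref{existoptpoteps} nor Remark \ref{rem:norme_uniforme}), at the cost of the extra book-keeping of the convex combination; the paper's route is shorter once dual attainment is in hand and showcases the role of the Lagrange multiplier. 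Both balance $\delta$ against $\ep$ in the same way and yield the same $\ep\ln(1/\ep)$ rate. One small point you should state explicitly: the uniform bound $J(\gamma)\le C$ follows from the uniform $W_1$-modulus of continuity of $c_x$ guaranteed by the compactness of $K$ in \textbf{(H3)} together with the finiteness of $\int c_x(\tilde\pi_x)\,\dd\mu$ from \textbf{(H2)}, since these give $\int c_x(\gamma_x)\,\dd\mu\le\int c_x(\tilde\pi_x)\,\dd\mu+\omega_K(\mathrm{diam}(Y))$.
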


\begin{proof}
By duality (see Theorem \ref{theorem:opticond}) we can write
\begin{equation}\label{eq:semidual_convergence}
\min_{\pi \in \pigor} J_\ep(\pi)
    = \inf_{\pi \in \Pi(\mu,\nu)}
      J_\ep(\pi) 
        + \int_{X\times Y} \alpha^\ep(x)\, g(x,y)\,\dd\pi(x,y),
\end{equation}
where $\alpha^\ep$ is an optimal dual variable corresponding to the constraint $\int g(x,y) \dd \pi_x(y) = 0 \, \mu(x)-a.e.$. The existence of such an optimal $\alpha^\ep \in L^\infty(X,\mu)$ is guaranteed by Theorem \ref{existoptpoteps}. Moreover, following remark \ref{rem:norme_uniforme}, the functions $\al_\ep$ are uniformly (in $\ep$) bounded in $L^\infty(\mu)$.

Let $\ga$ be an optimal plan for the unregularized problem \eqref{pbwot}. For $\delta>0$, let $\ga^\delta$ be the $\delta$-sliced approximation of $\ga$.
Using $\ga^\delta$ as a competitor in the RHS of equation \eqref{eq:semidual_convergence}, we get
\begin{equation}
\label{eq:competitor}
\min_{\pi \in \pigor} J_\ep(\pi) \leq J_\ep(\ga^\de) + \int_{X\times Y} \alpha^\ep(x)\, g(x,y)\,\dd\ga^\delta(x,y).
\end{equation}
By Hölder-continuity of $f$ and since $\th$ is Lipschitz as seen before by assumption \textbf{(H1)}, the function $\nu \mapsto \th(\int f \dd \nu)$ is Hölder-continuous for $W_\infty$, and by sum with $c_x$ that also has this property, we obtain
\[ \left\lvert (c_x(\ga^\de_x) + \th (\int_Y f \dd \ga^\de_x)) - (c_x(\ga_x) + \th (\int_Y f \dd \ga_x) \right\rvert \leq C_1 W_\infty(\ga^\de_x,\ga_x)^a,
\]
with $C_1,a>0$ independent of $x$ by hypothesis. So integrating this inequality, we get
\[ J(\ga^\de) - J(\ga) \leq C_1 \int_X W_\infty(\ga^\de_x,\ga_x)^a \dd \mu(x).
\]
From the properties of the sliced-approximation described in Lemma \ref{lem:sliced-entropy}, $W_\infty(\ga^\de_x,\ga_x) \leq \sqrt{d}\de$ for $\mu$-a.e. $x$. So $J(\ga^\de) - J(\ga) \leq C_2 \de^a$, with $C_2 = C_1 \sqrt{d}^a$. Moreover $J_\ep(\ga^\de) = J(\ga^\de) + \ep H(\ga^\de \mid \mu \otimes \nu)$ and we have, still from Lemma \ref{lem:sliced-entropy}, $H(\gamma^\delta \vert \mu \otimes \nu)\leq -d \log(\delta) + C_3$ with $C_3 := d \log({\mathrm{diam}}_\infty Y+2)$. So we obtain
\[
J_\ep(\ga^\de) \leq  J(\ga) + C_2 \de^a - d\ep \ln(\de) + C_3 \ep.
\]
This is a sharp bound for the first term in the RHS of inequation \eqref{eq:competitor}. As for the second term, since $(\al_\ep)_\ep$ is uniformly (in $\ep$) bounded in $L^\infty(\mu)$ and using the property of the sliced-approximation $W^\infty(\pi_x^\de,\pi_x) \leq \sqrt{d}\de$ as well as the Hölder-continuity of the function g, we obtain
\[ \left\lvert \int_{X\times Y} \alpha^\ep(x)\, g(x,y)\,\dd\ga^\delta(x,y) \right\rvert \leq C_4\de^{b}
\]
with $C_4,b >0$. With those two bounds for the RHS of \eqref{eq:competitor}, and replacing $J(\ga)$ by 
$\displaystyle \min_{\pi \in \pigor} J(\pi)$, since it was chosen as a minimizer, we get
\[
    \min_{\pi \in \pigor} J_\ep(\pi) - \min_{\pi \in \pigor} J(\pi) \leq C_2 \delta^{a} - d\,\ep\ln \delta +C_3 \ep + C_4 \de^{b}.
\]
The claim follows, by adjusting $\de$ to $\ep^{\frac{1}{a\wedge b}}$ and with $C := C_2 + C_3 + C_4 + \dfrac{d}{a \wedge b}$.
\end{proof}

Having controlled the purely entropic error in Proposition~\ref{prop:rate-ep}, we now turn to the penalty-only regime ($\ep=0$, $\ze>0$).
In this case the entropy disappears and the rate is dictated by the coercivity of $\tth$ around $0$. The next result shows that the relaxation
error decays algebraically with $\ze$.

\begin{proposition}\label{prop:rate-zeta}
    Suppose that $\tth$ is a convex function with $\tth(x) \geq c |x|^{1+a}$ for some constants $c,a>0$.
    Then, under assumptions \textbf{(H1)-(H2)-(H3)-(H4)-(H5)}, we have
    \[ 0 \leq  \min_{\pi \in \pigor} J(\pi) - \min_{\pi \in \Pi(\mu,\nu)} J_{0,\ze}(\pi)  \leq C\ze^{1/a} \]
    for some constant $C>0$.
\end{proposition}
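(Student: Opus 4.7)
The plan is to reduce the upper bound to a Fenchel--Young computation against a bounded Lagrange multiplier for the \emph{unregularized} primal problem. The lower inequality $\min_{\pigor}J \geq \min_{\Pi(\mu,\nu)} J_{0,\ze}$ is cost-free: for any $\pi\in\pigor$ the penalty term in $J_{0,\ze}$ vanishes, so $J_{0,\ze}(\pi)=J(\pi)$, and since $\pigor\subset\Pi(\mu,\nu)$, passing to infima gives the inequality.

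For the quantitative upper bound, the first step is to invoke Theorem \ref{existoptpot} and Theorem \ref{theorem:opticond}(i) to obtain $\al^*\in L^\infty(\mu,\R^N)$ such that
\begin{equation*}
\min_{\pi\in\pigor} J(\pi) \;=\; \min_{\pi\in\Pi(\mu,\nu)}\Big\{J(\pi)+\int_{X\times Y}\al^*(x)\cdot g(x,y)\,\dd\pi(x,y)\Big\}.
\end{equation*}
Next, I apply the pointwise Fenchel--Young inequality to the rescaled $\tth$: for any $\al\in L^\infty(\mu,\R^N)$ and every $z\in\R^N$,
\begin{equation*}
\tfrac{1}{\ze}\tth(z)\;\ge\;\al(x)\cdot z-\tfrac{1}{\ze}\tth^{*}\bigl(\ze\,\al(x)\bigr),
\end{equation*}
which, substituted with $z=\int g\,\dd\pi_x$ and integrated against $\mu$, yields
\begin{equation*}
J_{0,\ze}(\pi)\;\ge\;J(\pi)+\int_{X\times Y}\al\cdot g\,\dd\pi-\tfrac{1}{\ze}\int_X \tth^{*}(\ze\al(x))\,\dd\mu(x).
\end{equation*}
Taking the infimum over $\pi\in\Pi(\mu,\nu)$, then specializing to $\al=\al^*$ and using the Lagrangian identity above, I obtain
\begin{equation*}
\min_{\pigor}J-\min_{\Pi(\mu,\nu)}J_{0,\ze}\;\le\;\tfrac{1}{\ze}\int_X \tth^{*}(\ze\,\al^*(x))\,\dd\mu(x).
\end{equation*}

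The last step is a standard convex-duality estimate. From $\tth(x)\ge c|x|^{1+a}$ I deduce $\tth^{*}(p)\le C_a |p|^{1+1/a}$ for a constant $C_a$ depending only on $c$ and $a$, because Legendre transform reverses pointwise inequalities. Plugging this in and using $\|\al^*\|_{L^\infty(\mu)}<\infty$ gives
\begin{equation*}
\tfrac{1}{\ze}\int_X \tth^{*}(\ze\al^*)\,\dd\mu\;\le\; C_a\,\ze^{1/a}\int_X|\al^*(x)|^{1+1/a}\,\dd\mu\;\le\;C_a\|\al^*\|_\infty^{1+1/a}\,\ze^{1/a},
\end{equation*}
which yields the desired bound with $C := C_a\|\al^*\|_\infty^{1+1/a}$. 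The only non-routine ingredient is the existence of a \emph{bounded} Lagrange multiplier $\al^*$, and this is precisely what the Slater qualification \textbf{(H5)} buys via Theorem \ref{existoptpot}; once that is granted, the rest is a one-line Young inequality plus a scalar Legendre transform estimate, so there is no real obstacle beyond citing the dual attainment results from Section~\ref{sec-attain}.
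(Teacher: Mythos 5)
Your proof is correct and takes essentially the same route as the paper's: both hinge on the $L^\infty$ Lagrange multiplier $\al^*$ from Theorem \ref{existoptpot}/\ref{theorem:opticond}(i), a Fenchel--Young estimate against $\ze\al^*$, and the elementary Legendre bound $\tth^*(p)\le C_a|p|^{1+1/a}$. The only cosmetic difference is that you take the infimum over all $\pi\in\Pi(\mu,\nu)$ after the pointwise Young inequality rather than first fixing an optimal competitor for the penalized problem, and you also spell out the trivial lower inequality, but the computation and conclusion are the same.
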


\begin{proof}
    We have the dual representation
    \[
    \min_{\pi \in \pigor} J(\pi)
    = \inf_{\pi \in \Pi(\mu,\nu)} \left\{
       J(\pi)
        + \int_{X\times Y} \alpha(x)\,g(x,y)\,\dd\pi(x,y) \right\},
\]
where $\alpha$ is an optimal dual variable corresponding to the constraint $\int g(x,y) \dd \pi_x(y) = 0 \, \mu(x)-a.e.$. The existence of such an optimal $\alpha \in L^\infty(X,\mu)$ is guaranteed by Theorem \ref{existoptpot}. Let $\ga$ be an optimal plan for the problem $\inf_{\pi \in \Pi(\mu,\nu)} J_{0,\ze}(\pi)$. Using $\ga$ as a competitor in the above dual expression above yields
\begin{align*}
\min_{\pi \in \pigor} J(\pi)
&\le J(\ga)
   + \int_{X\times Y} \alpha(x) g(x,y)\,\dd\ga(x,y) \\
&=   J_{0,\ze}(\ga)
   + \int_{X\times Y} \alpha(x) g(x,y)\,\dd\ga(x,y)
   - \frac{1}{\ze}\int_X \tilde\theta\!\Bigl(\!\int_Y g\,\dd\ga_x\Bigr)\,\dd \mu(x).
\end{align*}
Apply Fenchel–Young’s inequality $\tilde\theta(u)+\tilde\theta^*(v)\ge u\cdot v$ with
$u=\int_Y g\,\dd\ga_x$ and $v=\ze\,\alpha(x)$:
\[
\int_{X\times Y} \alpha(x) g(x,y)\,\dd\ga(x,y)
\le \frac{1}{\ze}\int_X \tilde\theta\!\Bigl(\!\int_Y g\,\dd\ga_x\Bigr)\,\dd \mu(x)
   + \frac{1}{\ze}\int_X \tilde\theta^*\!\bigl(\ze\,\alpha(x)\bigr)\,\dd \mu(x).
\]
Substituting this into the previous inequality gives
\[
    \min_{\pi \in \pigor} J(\pi) - J_{0,\ze}(\ga)
    \le \frac{1}{\ze}\int_X \tilde\theta^*\!\bigl(\ze\,\alpha(x)\bigr)\,\dd \mu(x).
\]

From $\tilde\theta(x)\ge c|x|^{1+a}$ it follows that
$\tilde\theta^*(v) \le c^{-1}|v|^{\frac{1+a}{a}}$.
Hence, by optimality of $\ga$ for the problem $\inf_{\pi \in \Pi(\mu,\nu)} J_{0,\ze}(\pi)$, we have
\[
    \min_{\pi \in \pigor} J(\pi) - \min_{\pi \in \Pi(\mu,\nu)} J_{\ep,\ze}(\pi) 
    \le C\,\ze^{\frac{1}{a}},
\]
with $C=c^{-1} \|\alpha\|_{\infty}^{\frac{1+a}{a}}$. This proves the claim.
\end{proof}

\begin{remark}

As noted above, the rate of convergence for the mixed problem \eqref{eq:WOT-ep-ze}
is bounded by the sum of the rates given by propositions \ref{prop:rate-ep} and \ref{prop:rate-zeta}.
Under the assumptions of Propositions~\ref{prop:rate-ep} and~\ref{prop:rate-zeta}, we obtain
\[
    \bigl|\min_{\pi \in \Pi(\mu,\nu)} J_{\ep,\ze}(\pi) - \min_{\pi \in \pigor} J(\pi) \bigr|
    \le C_1\,\ep \ln\!\Bigl(\tfrac{1}{\ep}\Bigr) + C_2\,\ze^{1/a},
\]
with some constants $C_1,C_2,a>0$, the parameter $a>0$ depending specifically on the function $\tth$.
\end{remark}

\section{Numerical simulations with SISTA}\label{sec-sista}

In this section, we present the numerical results obtained by implementing a Sinkhorn type algorithm. The algorithm we use has been introduced in \cite{CDGSSista}
and is known as SISTA (S for Sinkhorn and ISTA for Iterative Shrinkage-Thresholding Algorithm, aka proximal gradient descent). See \cite{gallouët2025metricextrapolationwassersteinspace} for insights on the convergence of the algorithm. To streamline the presentation of the algorithm we make the following simplifying assumptions: 
\begin{itemize}
    \item there is $T \in \mathbb{N}^\ast$ and $(x_i)_{i=1}^T,(y_i)_{i=1}^T \in \mathbb{R}^{dT}$ 
    \begin{equation*}
        \mu = \frac{1}{T} \sum_{i=1}^T \delta_{x_i}, \quad \nu = \frac{1}{T} \sum_{i=1}^T \delta_{y_i},
    \end{equation*}
    \item there is $\cost \in C(\mathbb{R}^d \times \mathbb{R}^d)$ such that $c_x(p) = \int \cost(x, y) \dd p(y)$,
    \item $\theta^\ast$ is differentiable.
\end{itemize}
 These two assumptions are such that the optimization problem is now finite dimensional and the dual variable $\beta$ is fixed to be $\cost$. The idea is to solve the dual problem \eqref{dualpbwotreleps}
\[
\sup_{\xi=(\phi,\psi,\lambda,\alpha)} 
\int \phi\, \dd \mu + \int \psi \,\dd \nu 
- \int_X\theta^\ast(\lambda(x))\,\dd\mu(x)
-\varepsilon \int_{X\times Y} e^{-\Lambda \xi(x,y)/\varepsilon}\,\dd(\mu\otimes\nu),
\]
with $\Lambda \xi = \cost + \lambda\odot f + \alpha\odot g - (\phi\oplus\psi)$. The method is based on iterative updates of the dual variables $\phi, \psi, \al$ and $\la$. As for $\ph$ and $\ps$, corresponding to the marginal constraints, we use Sinkhorn iterates, that is exact maximization in $\phi$ and then $\ps$. On the other hand, the variables $\al$ and $\la$ will be updated using a gradient descent. It leads to the following algorithm:

\begin{algorithm}[H]
\caption{SISTA (Sinkhorn–ISTA) for weak OT with moment constraints}
\label{algo:sista}
\begin{algorithmic}[1]
\State \textbf{Input:} $\mu,\nu$, step size $\tau>0$, entropic scale $\varepsilon>0$, initial $\xi = (\alpha,\lambda,\phi,\psi)$
\State \textbf{Repeat until convergence:}
\State \hspace{0.5em}\textbf{(S)} \emph{Sinkhorn block:} 
\begin{align*}
\phi(x) &\leftarrow  \phi(x) + \varepsilon\log\!\int_Y \exp\!\Big(\tfrac{\Lambda\xi(x,y)}{\varepsilon}\Big)\,\dd\nu(y),\\
\psi(y) &\leftarrow \psi(y) + \varepsilon\log\!\int_X \exp\!\Big(\tfrac{\Lambda\xi(x,y)}{\varepsilon}\Big)\,\dd\mu(x),
\end{align*}
\State \hspace{0.5em}\textbf{(I)} \emph{ISTA block:}
\begin{align*}
    \la(x) &\leftarrow  \la(x) - \tau \left[\nabla \theta^\ast(\la(x)) - \int_Y f(x,y) e^{-\Lambda \xi(x,y)/\varepsilon}\,\dd\nu(y)\right]\\
\al(x) &\leftarrow \al(x) + \tau  \int_Y g(x,y) e^{-\Lambda \xi(x,y)/\varepsilon}\,\dd\nu(y).
\end{align*}
\end{algorithmic}
\end{algorithm}

Note that the updates in $\phi$ and $\psi$ correspond exactly to
Sinkhorn's algorithm with the shadow transport cost $c_{\lambda, \alpha, \beta}$ defined in equation \eqref{ozzycost}. In case $\theta^\ast$ is not differentiable the gradient step on $\theta$ is replaced by a proximal step which gives its name to the ISTA step initially presented in \cite{CDGSSista}. Theorem \ref{existoptpoteps} shows that the optimizer can be searched for in a ball of known a priori radius. This ensures that the functional enjoys a Lipschitz gradient on this ball. Thus for a sufficiently small step size $\tau$ the algorithm will converge. We refer to \cite{gallouët2025metricextrapolationwassersteinspace} for a careful analysis of the theoretical convergence of the algorithm.

In the context of martingale optimal transport this algorithm differs from the on presented in \cite{chen2024convergencesinkhornsalgorithmentropic} where the ISTA block \textbf{(I)} is replaced by an exact minimization of the functional in $\al$ with the other variables fixed.

\paragraph{Interpolation between Brenier and Strassen.} Following the idea presented in \cite{Gozlan-Juillet}, we propose to compute for $d=1$ and $t\in[0,1]$
\begin{equation*}
    \inf_{\pi \in \Pi(\mu,\nu)} t \int \vert x-y \vert^2 \dd \pi(x,y) + (1-t) \int \vert x- \int y \dd \pi_x(y) \vert^2 \dd \mu(x).
\end{equation*}
When $t=0$ any martingale coupling will be a solution, and if there is no martingale coupling the solution will be such that the law of its conditional expectation is the projection of the first marginal onto the set $\{\cdot \leq_{\mathrm{cvx}} \nu \}$. When $t=1$ the classical optimal transport problem is recovered. In order to compute the optimal plan,  we will add entropy with a small regularization parameter $\eps > 0$.

Figure \ref{fig:brestra} represents the coupling $\pi$ for different values of $t$. The marginal $\mu$ is the law of the standard Gaussian $\mathcal{N}(0,1)$ on the real line and $\nu$ is the mean of the laws of $\mathcal{N}(-1,1)$ and $\mathcal{N}(1,1)$. The measures are approximated using empirical measures with $T= 1000$ points. The regularization parameter $\eps$ is taken to be $10^{-2}$.

\begin{figure}[htbp]
  \centering
  \includegraphics[width=\textwidth,trim={0cm 4.6cm 0.1cm 4.7cm},clip]{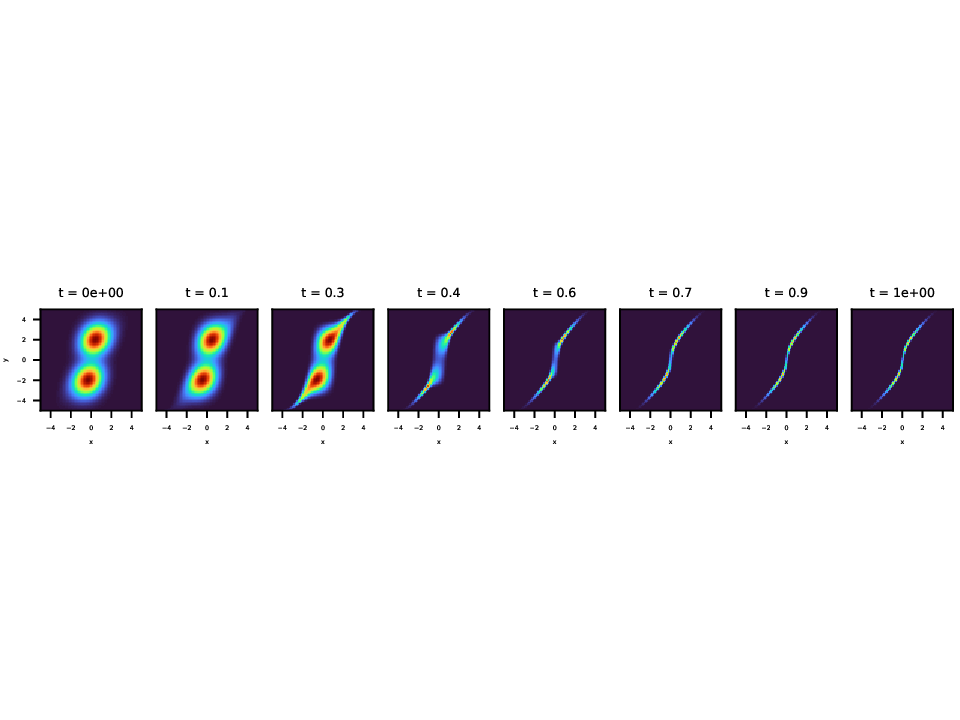}
  \caption{Interpolation between Brenier and Strassen.}
  \label{fig:brestra}
\end{figure}

Observe that for $t= 0$ the optimal coupling is not unique for $\eps = 0$ as any martingale coupling is solution. However, the entropy term added for the algorithm selects the one the smallest relative entropy. The diffusion behaviour which vanishes as $t \to 1$ is not related to $\eps$ rather it is tied to the fact that the barycentric map enjoys a Lipschitz behaviour which degenerates as $t \to 1$.

\paragraph{Interpolation between left-curtain and optimal transport.} For $d=1$, a distinguished coupling in martingale optimal transport is the left-curtain coupling introduced in \cite{BeigJuilletl}. Here we propose to compute an interpolation between this coupling and the optimal transport plan. To do so introduce for $t\in [0,1]$

\begin{equation*}
    \inf_{\pi \in \Pi(\mu,\nu)}  \int t \vert x-y \vert^2  + (1-t) (1+\tanh(-x))\sqrt{1+y^2} \dd \pi(x,y) + \left(\frac{1}{t}-1\right)\int \vert x-\int y \dd\pi_x(y) \vert^2\dd \mu(x).
\end{equation*}

The choice of cost for $t = 0$ is guided by \cite[Proposition 2.8]{MR3573297} which ensures that the left-curtain is optimal under martingale constraint. Proposition \ref{prop:gamma-mixte} ensures that for small values of $t$ this problem $\Gamma$-converges towards the left-curtain problem.

The numerical setting is the same as above: $\mu \sim \mathcal{N}(0,1)$ and $\nu \sim \frac{1}{2}\left(\mathcal{N}(-1,1) + \mathcal{N}(1,1)\right)$ are approximated by empirical measures with $T= 1000$ points and $\eps = 10^{-2}$.

\begin{figure}[htbp]
  \centering
  \includegraphics[width=\textwidth,trim={0cm 4.6cm 0.1cm 4.7cm},clip]{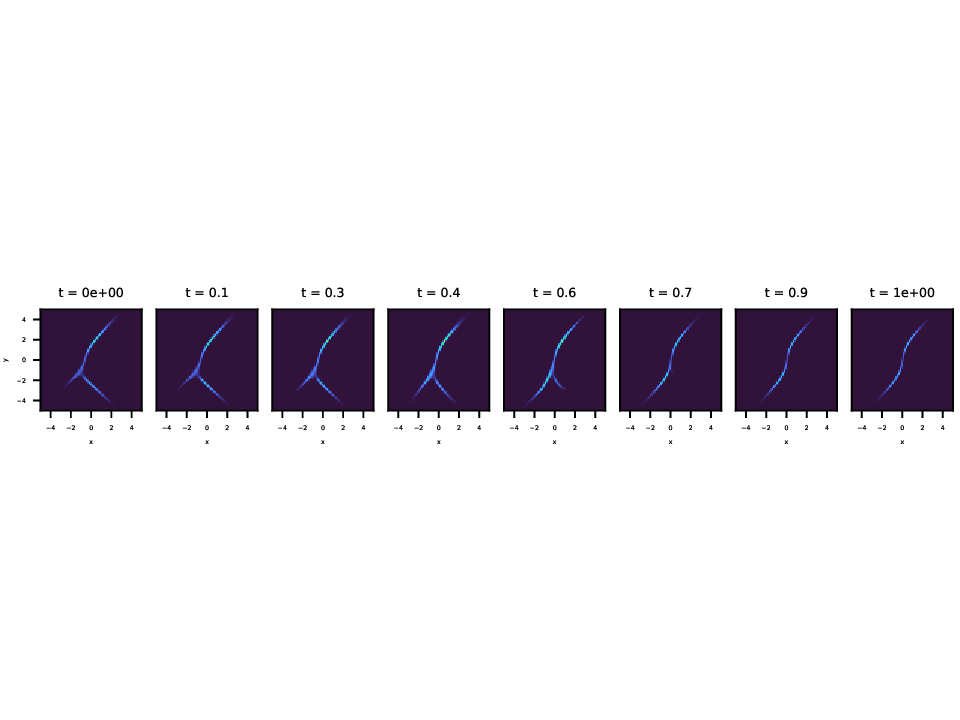}
  \caption{Interpolation between the Left curtain transport and optimal transport}
  \label{fig:lc}
\end{figure}

Observe that the structure of the left-curtain coupling is recovered for small values of $t$.

\section*{Appendix}

\section*{ A. Median of a function in $L^1(\mu)$}

\begin{lemma}\label{lem:median}
For every $\alpha \in L^1((X,\mu), \R^N)$, there exist $\overline{a}\in \R^N$ and $\overline{r} \in L^\infty((X,\mu), \R^N)$ such that $\Vert \overline{r} \Vert_\infty \leq 1$, $\int_X \overline{r} \dd \mu = 0$ and 
\begin{align}
\inf_{a \in \mathbb{R}^N} \Vert \alpha -a \Vert_{L^1(\mu)}&=    \Vert \alpha - \overline{a} \Vert_{L^1(\mu)}  = \int_X \langle \alpha, \overline r \rangle \dd\mu \label{mediandual}\\&=\max  \left\{ \int_X \langle \alpha,r \rangle \dd\mu \; : \;  \Vert r \Vert_{\infty} \leq 1, \; \int_X r \dd \mu=0\right\}.
\nonumber
\end{align}
Any  $\overline{a}\in \R^N$ fulfilling the above condition is called   a \emph{median} of $\alpha$.
\end{lemma}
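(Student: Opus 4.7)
The plan is to first establish existence of the median $\overline{a}$ and then use its first-order optimality condition to build a saturating $\overline{r}$. The map $a \mapsto \Vert \alpha - a\Vert_{L^1(\mu)}$ is convex, continuous, and coercive (since $\Vert \alpha - a\Vert_{L^1(\mu)} \geq |a| - \Vert \alpha \Vert_{L^1(\mu)}$), so it admits at least one minimizer $\overline{a} \in \R^N$.

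The easy inequality is obtained by weak duality: for every admissible $r$ (i.e.\ $\Vert r\Vert_\infty \leq 1$ and $\int r \dd\mu = 0$) and every $a\in\R^N$, since $\int \langle a,r\rangle \dd\mu = 0$,
\[
\int_X \langle \alpha, r\rangle \dd\mu = \int_X \langle \alpha - a, r\rangle \dd\mu \leq \Vert \alpha - a\Vert_{L^1(\mu)},
\]
and taking the infimum over $a$ gives $\int_X \langle \alpha, r\rangle \dd\mu \leq \Vert \alpha - \overline{a}\Vert_{L^1(\mu)}$. Thus the supremum on the right-hand side of \eqref{mediandual} is bounded above by $\inf_a \Vert \alpha - a\Vert_{L^1(\mu)}$, and it only remains to exhibit an admissible $\overline{r}$ saturating this bound.

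The candidate is the obvious one coming from subdifferential analysis: set $\overline{r}_0(x) := (\alpha(x) - \overline{a})/|\alpha(x) - \overline{a}|$ on the set $E := \{\alpha \neq \overline{a}\}$ and $\overline{r}_0 := 0$ on $E^c$. Then $\Vert \overline{r}_0 \Vert_\infty \leq 1$ and $\int_X \langle \alpha - \overline{a}, \overline{r}_0\rangle \dd\mu = \Vert \alpha - \overline{a}\Vert_{L^1(\mu)}$. The task reduces to correcting $\overline{r}_0$, on $E^c$ only, so that the integral vanishes. Let $s := \int_X \overline{r}_0 \dd\mu \in \R^N$ and $M := \mu(E^c)$. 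The first-order optimality of $\overline{a}$ reads: for every direction $h \in \R^N$,
\[
0 \leq \lim_{t\downarrow 0} \frac{\Vert \alpha - \overline{a} - th\Vert_{L^1(\mu)} - \Vert \alpha - \overline{a}\Vert_{L^1(\mu)}}{t} = -\int_E \langle \overline{r}_0, h\rangle \dd\mu + \int_{E^c} |h| \dd\mu = -\langle s, h\rangle + M |h|,
\]
which is equivalent to $|s| \leq M$.

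The hard point is the construction on $E^c$ and the use of this bound $|s|\leq M$. If $M>0$ I would define $\overline{r} := \overline{r}_0 - \frac{s}{M}\, \mathbf{1}_{E^c}$, which has $L^\infty$ norm at most $1$ (disjoint supports, and $|s|/M \leq 1$), zero mean by construction, and satisfies $\int_X\langle\alpha,\overline{r}\rangle\dd\mu = \Vert\alpha-\overline{a}\Vert_{L^1(\mu)} + \langle\overline{a},s\rangle - \langle\overline{a},s\rangle = \Vert\alpha-\overline{a}\Vert_{L^1(\mu)}$. If $M = 0$ then $s = 0$ automatically, so $\overline{r} := \overline{r}_0$ already works. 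Combining with the weak-duality inequality closes the chain of equalities in \eqref{mediandual}, and also shows that the supremum on the right-hand side is attained, hence a maximum. The main subtlety is the computation of the one-sided directional derivative — in particular the fact that on the set $\{\alpha = \overline{a}\}$ it contributes $|h|$ (not a linear term) — which is exactly what yields the crucial bound $|s|\leq \mu(\{\alpha=\overline{a}\})$ needed to fit the correction within the $L^\infty$-ball of radius $1$.
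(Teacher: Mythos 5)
Your proof is correct, but it takes a genuinely different route from the paper's. The paper obtains \eqref{mediandual} abstractly by applying the Fenchel–Rockafellar theorem: it writes the left-hand side as $\inf_a G_\alpha(\Lambda a)$ where $\Lambda:\R^N \to L^1(\mu)$ is the embedding of constants, computes $G_\alpha^*$ and $\Lambda^*$, and reads off the dual. Your approach instead proceeds by explicit construction: weak duality gives $\le$, and you exhibit an admissible $\overline r$ saturating it. The key observation — that the one-sided directional derivative of $a\mapsto \|\alpha-a\|_{L^1(\mu)}$ at the median $\overline a$ in direction $h$ equals $-\langle s,h\rangle + M|h|$ (with $s = \int_E \frac{\alpha-\overline a}{|\alpha-\overline a|}\dd\mu$ and $M = \mu(\{\alpha=\overline a\})$), so that optimality forces $|s|\le M$ — is exactly what lets you fit the constant correction $-s/M$ on $\{\alpha=\overline a\}$ inside the unit ball of $L^\infty$. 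The differentiation under the integral is justified by dominated convergence, since the difference quotients are bounded by $|h|$. What your argument buys: it is more elementary (no appeal to Fenchel–Rockafellar), it produces an explicit maximizer $\overline r$ rather than asserting its existence, and it makes the role of the atom $\{\alpha = \overline a\}$ transparent — which is precisely where naively setting $\overline r = (\alpha-\overline a)/|\alpha-\overline a|$ would fail to have zero mean. The paper's route is shorter once the abstract duality machinery is available, and is arguably more in the spirit of the rest of the paper, which uses Fenchel–Rockafellar repeatedly.
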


\begin{proof} 
Note that by the triangle inequality $\Vert \alpha -a \Vert_{L^1(\mu)}\geq \vert a\vert- \Vert \alpha \Vert_{L^1(\mu)}$, the existence of a minimizer (a median) for the minimization problem in the left hand side of \eqref{mediandual} easily follows. Now denoting by $\Lambda \in \L_c(\R^N, L^1((X, \mu), \R^N)$ the canonical imbedding of constants into $L^1$ functions, its adjoint $\Lambda^*\in \L_c( L^{\infty}((X, \mu), \R^N)), \R^N)$ is by construction given by $\Lambda^* r=\int_X r \dd \mu$. Defining $G_\alpha(l):=\Vert \alpha-l \Vert_{L^1(\mu)}$ for every $l\in L^1((X, \mu), \R^N)$,   we readily see  that for every $s\in L^{\infty}((X, \mu), \R^M)$
 \[G_{\alpha}^*(s)= \begin{cases} \int_X  \langle \alpha, s \rangle \dd \mu \mbox{ if $ \Vert s \Vert_{\infty} \leq 1$}\\ + \infty \mbox{ otherwise }\end{cases} \]
hence a direct application of the Fenchel-Rockafellar Theorem gives
\begin{align*}
 \inf_{a \in \mathbb{R}^N} \Vert \alpha -a \Vert_{L^1(\mu)}&=\inf_{a \in \R^N} G_\alpha(\Lambda a)
 = \max\{ -G_{\alpha}^*(-r) \; : \; r\in L^{\infty}((X, \mu), \R^M), \; \Lambda^* r =0\}\\
& =\max  \left\{ \int_X \langle \alpha,r \rangle \dd\mu \; : \;  \Vert r \Vert_{\infty} \leq 1, \; \int_X r \dd \mu=0\right\}.
    \end{align*}

\end{proof}

\section*{B. Proof of Lemma \ref{dualthetarep}}

Let $\pi=\mu \otimes \pi_x \in \Pi(\mu, \nu)$, define  $h(x):=\int_{X} f(x,y) d\pi_x(y)$ for every $x\in X$ then $h$ is Borel and $\Vert h \Vert_{\infty} \leq \Vert f \Vert_{\infty}$. Since $\int_{X\times Y} (\lambda\odot f ) \dd\pi = \int_X \lambda \cdot h  \; \dd \mu$ we have to show that $a=b=c$ where
\[a:=\int_X \theta(h(x)) \dd \mu(x),\;  b:=\sup_{\lambda \in C(X, \R^M)} \left\{ \int_X \lambda \cdot h  \; \dd \mu- \int_X \theta^*(\lambda(x)) \dd \mu(x)\right\}\]
and 
\[c:=\sup_{\lambda \in C(X, \R^M), \Vert  \lambda \Vert_{\infty} \leq C} \left\{ \int_X \lambda \cdot h \;  \dd \mu- \int_X \theta^*(\lambda(x)) \dd \mu(x)\right\}\]
and $C=C(\theta, f)$ is the Lipschitz constant of $\theta$ on $B(0, \Vert f \Vert_{\infty}+1)$. Obviously $b \geq c$ and by integrating Young's inequality $\theta(h(x)) \geq \lambda(x) \cdot h(x)-\theta^*(\lambda(x))$ we readily see that $a\geq b$. It remains to show that $a\leq c$, for this we proceed by regularization as follows. 
Fix  first $\eps>0$, By Lusin's theorem there exists $h_\eps \in C(X, \R^M)$ and $X_\eps\subset X$, compact such that
\begin{equation}\label{mercilusin1}
\Vert h_\eps \Vert_{\infty} \leq \Vert h \Vert_\infty \leq \Vert f \Vert_\infty, \; h=h_{\eps} \mbox{ on $X_\eps$ and } \mu(X \setminus X_\eps) \leq \eps.
\end{equation}
In particular we have
\[ a \leq \int_X \theta(h_\eps(x)) \dd \mu(x)+ L\eps \mbox{ with }  L:= 2 \sup \{\vert \theta(u)\vert, \; u\in \R^M, \vert u \vert \leq \Vert f \Vert_{\infty}\}.\]
Now let $\delta>0$, define the Moreau-Yosida regularization of $\theta$ by
\[\theta_\delta(u):=\min_{v\in \R^M} \{\frac{1}{2 \delta} \vert u-v\vert^2 + \theta(v)\}\]
then $\theta_\delta$ is $C^{1,1}$  (we refer to the seminal paper of \cite{Moreau}) for the properties of Moreau-Yosida approximations used in this proof)
and  $\theta_\delta$ converges monotonically pointwise to $\theta$ as $\delta \to 0^+$ hence also uniformly on compact sets by \textbf{(H1)} and Dini's Theorem, which implies in particular 
\begin{equation}\label{mercimoreau}
a \leq \int_X \theta_{\delta}(h_\eps(x)) \dd \mu(x) + L\eps + C_{\delta} \mbox{ with } \lim_{\delta \to 0^+} C_\delta=0.
\end{equation}
Now define $\lambda_{\eps, \delta}(x):=\nabla \theta_{\delta}(h_\eps(x))$ observe that $\lambda_{\eps, \delta}$ is continuous and for every $x\in X$ we have
\begin{align*}
 \theta_{\delta}(h_\eps(x))= \lambda_{\eps, \delta}(x) \cdot h_\eps(x)-\theta_{\delta}^*(\lambda_{\eps, \delta}(x))\\
 \leq  \lambda_{\eps, \delta}(x) \cdot h_\eps(x)-\theta^*(\lambda_{\eps, \delta}(x))
 \end{align*}
 where we used the fact that $\theta_{\delta}^* \geq \theta^*$ (as a consequence of $\theta_{\delta} \leq \theta$) in the second line. Note also (again we refer to \cite{Moreau}) that
 \[\lambda_{\eps, \delta}(x):=\nabla \theta_{\delta}(h_\eps(x)) \in \partial \theta (v_{\delta}(h_\eps(x)))\]
 where 
 \[v_\delta (h_\eps(x))=\argmin_{v\in \R^M}  \{\frac{1}{2 \delta} \vert h_\eps(x)-v\vert^2 + \theta(v)\}\]
 so that (using $\theta \geq 0$)
 \[  \vert h_\eps(x)- v_\delta (h_\eps(x))\vert^2 \leq 2 \delta \theta(h_\eps(x)) \leq L \delta \]
  in particular for $\delta$ small enough, $v_\delta(h_\eps(x))$ belongs to $B(0, \Vert f \Vert_{\infty)}+1)$ for every $\eps$ and every $x$ so that $\lambda_{\eps, \delta}(x) \in \partial \theta(v_{\delta}(h_\eps(x)))$ has norm less than $C=C(\theta, f)$. For $\delta >0$ small enough, we thus have
 \begin{align*}
 a &\leq \int_{X} (\lambda_{\eps, \delta(x)}  \cdot h_\eps(x)-\theta^*(\lambda_{\eps, \delta}(x)) )\dd \mu(x) + L\eps + C_\delta\\
&= \int_{X} (\lambda_{\eps, \delta}(x) \cdot h(x)-\theta^*(\lambda_{\eps, \delta}(x)) )\dd \mu(x) +  \int_X \lambda_{\eps, \delta}(x) \cdot (h_\eps(x)-h(x)) \dd \mu(x)  +L\eps + C_\delta\\
  &\leq c +\int_X \lambda_{\eps, \delta} (x) \cdot (h_\eps(x)-h(x))\dd \mu(x)  +L\eps + C_\delta
  \end{align*}
recalling \eqref{mercilusin1}  and using the fact that   $\Vert  \lambda_{\eps, \delta}\Vert_{\infty} \leq C$, letting $\eps, \delta \to 0^+$, we obtain the desired inequality $a\leq c$.

\section*{C. Proof of Lemma \ref{dualjeanpierre}} 

The fact that 
\[\int_{X} c(x, \pi_x) \dd \mu(x)\geq  \sup_{\beta \in C(X,K)} \left\{\int_{X\times Y} \beta \dd \pi-\int_X c_x^*(\beta_x) \dd \mu(x) \right\}\]
follows directly from Young's inequality $c_x(\pi_x)\geq \int_Y \beta_x \dd \pi_x-c_x^*(\beta_x)$ whenevener $\beta_x \in C(Y)$. To show the converse inequality, we shall proceed by approximation as before. By compactness of $K$, for every $\delta>0$, we can cover $K$ by finitely many closed balls of radius $\delta$ (for the uniform norm), denote by $(\gamma_{1}, \ldots, \gamma_{N_\delta}) \in K^{N_{\delta}}$ the centers of these balls and by $K_\delta$ the convex hull of the finite set $\{\gamma_1, \ldots, \gamma_{N_\delta}\}$. Thanks to \eqref{cstarlip1}, and assumption \textbf{(H3)} we see that for every $x$
\begin{align*}
c_x(\pi_x)&=\max_{\gamma \in K} \left\{\int_Y \gamma \dd \pi_x-c_x^*(\gamma)\right\} \leq \max_{\gamma \in K_\delta} \left\{\int_Y \gamma \dd \pi_x-c_x^*(\gamma)\right\}+ 2 \delta\\
&=  \max_{\eta \in \Delta_\delta} \left\{\sum_{i=1}^{N_\delta}  \eta_i \int_Y \gamma_i  \dd \pi_x-c_x^*\Big( \sum_{i=1}^{N_\delta}  \eta_i\gamma_i\Big)\right\}   + 2\delta
\end{align*}
where $\Delta_\delta$ is the simplex $\{(\eta_1, \ldots, \eta_{N_\delta}) \in \R_+^{N_\delta} \; : \; \sum_{i=1}^{N_\delta} \eta_i=1\}$. Thanks to Lemma \ref{cstarisnormal}, we can now invoke a measurable selection argument (like Theorem 1.2 in Chapter VIII of \cite{EkelandTemam}) and select a maximizer of $\eta \in \Delta_{\delta} \mapsto \left\{\sum_{i=1}^{N_\delta}  \eta_i \int_Y \gamma_i  \dd \pi_x-c_x^*\Big( \sum_{i=1}^{N_\delta}  \eta_i\gamma_i\Big)\right\}$ which depends in a measurable way on $x$, denote by $\eta^{\delta}(x)=(\eta_1^\delta(x), \ldots, \eta_{N_\delta}^\delta(x))$ such a maximizer and set
\[\beta^{\delta}(x,y):=\sum_{i=1}^{N_{\delta}} \eta_i^{\delta}(x) \gamma_i(y), \; \forall (x,y)\in X\times Y.\]
Now given $\eps>0$, by Lusin's theorem we can find a compact subset $X_{\delta, \eps}$ of $X$ and  $\eta^{\delta, \eps}=(\eta^{\delta, \eps}_1, \ldots \eta^{\delta, \eps}_{N_{\delta}})\in C(X, \R^{N_{\delta}})$ such that 
\begin{equation}\label{mercilusin2}
\eta^{\delta, \eps} =\eta^{\delta} \mbox{ on $X_{\delta, \eps}$ and } \mu(X \setminus X_{\delta, \eps}) \leq \eps
\end{equation}
up to projecting $\eta^{\delta, \eps}$ onto $\Delta_{\delta}$ we may also assume that $\eta^{\delta, \eps}$ takes its values in $\Delta_{\delta}$. Hence, defining
\[\beta^{\delta, \eps}(x,y):=\sum_{i=1}^{N_{\delta}} \eta_i^{\delta,\eps}(x) \gamma_i(y), \; \forall (x,y)\in X\times Y,\]
we have $\beta^{\delta, \eps} \in C(X, K_{\delta}) \subset C(X, K)$ and $\beta^{\delta, \eps}$ and $\beta^{\delta}$ coincide on $X_{\delta, \eps}$, so that
\begin{align*}
\int_X c_x(\pi_x) \dd \mu(x) & \leq \int_{X} \int_Y \beta^{\delta}(x,y) \dd \pi_x(y)\dd \mu(x)-\int_{X} c_x^*(\beta^{\delta}_x) \dd \mu(x)+ 2 \delta\\
& = \int_{X} \int_Y \beta^{\delta, \eps}(x,y) \dd \pi_x(y)\dd \mu(x)-\int_{X} c_x^*(\beta^{\delta, \eps}_x) \dd \mu(x)+ 2 \delta\\
&+ \int_{X\setminus X_{\eps, \delta}} \int_Y(\beta^{\delta}(x,y)- \beta^{\delta, \eps}(x,y)) \dd \pi_x(y)\dd \mu(x) + \int_{X\setminus X_{\eps, \delta}}( c_x^*(\beta^{\delta, \eps}_x)- c_x^*(\beta^{\delta}_x) )\dd \mu(x)\\
& \leq   \int_{X\times Y} \beta^{\delta, \eps} \dd \pi -\int_{X} c_x^*(\beta^{\delta, \eps}_x) \dd \mu(x) + 2 \delta +  2 \eps \mathrm{diam}(K)\\
& \leq  \sup_{\beta \in C(X,K)} \left\{\int_{X\times Y} \beta \dd \pi-\int_X c_x^*(\beta_x) \dd \mu(x) \right\} + 2 \delta +  2 \eps \mathrm{diam}(K)
\end{align*}
where we have used \eqref{cstarlip1} and $\mu(X \setminus X_{\delta, \eps}) \leq \eps$ in the fourth line  and the fact that $\beta^{\delta, \eps} \in C(X, K)$  in the last one. We conclude by letting $\eps$ and $\delta$ vanish.  
\smallskip

{\bf Acknowledgments:}  GC acknowledges the support of the Lagrange Mathematics and Computing Research Center.

\bibliographystyle{plain}
\bibliography{references}

\end{document}